\newtheorem{thm}{Theorem}[section]
\newtheorem{mainthm}{Theorem}
\newtheorem{prop}[thm]{Proposition}
\newtheorem{lemma}[thm]{Lemma}
\newtheorem{cor}[thm]{Corollary}
\theoremstyle{definition}
\newtheorem{defi}[thm]{Definition}
\theoremstyle{remark}
\newtheorem{rmk}[thm]{Remark}
\newtheorem{ex}[thm]{Example}
\newtheorem{conse}[thm]{Consequence}
\newtheorem{obj}[thm]{Objective}
\DeclareMathOperator{\Z}{\mathds{Z}}
\DeclareMathOperator{\N}{\mathds{N}}
\DeclareMathOperator{\G}{\mathds{G}}
\DeclareMathOperator{\Hom}{Hom}
\DeclareMathOperator{\Pic}{Pic}
\DeclareMathOperator{\rk}{rk}
\DeclareMathOperator{\M}{M}
\DeclareMathOperator{\Spl}{Spl}
\DeclareMathOperator{\Ext}{Ext}
\DeclareMathOperator{\discr}{discr}
\DeclareMathOperator{\Supp}{Supp}
\DeclareMathOperator{\End}{End}
\DeclareMathOperator{\ch}{ch}
\DeclareMathOperator{\id}{id}
\DeclareMathOperator{\coker}{coker}
\DeclareMathOperator{\Ker}{Ker}
\DeclareMathOperator{\C}{\mathds{C}}
\newcommand{\cH}{\mathcal{H}}
\newcommand{\cE}{\mathcal{E}}
\newcommand{\cK}{\mathcal{K}}
\newcommand{\cL}{\mathcal{L}}
\newcommand{\cR}{\mathcal{R}}
\newcommand{\cF}{\mathcal{F}}
\newcommand{\cG}{\mathcal{G}}
\newcommand{\cV}{\mathcal{V}}
\newcommand{\cN}{\mathcal{N}}
\newcommand{\cS}{\mathcal{S}}
\newcommand{\cI}{\mathcal{I}}
\newcommand{\cQ}{\mathcal{Q}}
\newcommand{\cO}{\mathcal{O}}
\newcommand{\cHom}{\mathcal{H}om}
\newcommand{\RHom}{\mathbf{R}{\mathrm{Hom}}}
\newcommand{\Db}{\mathbf{D}^b}
\newcommand{\bT}{\mathbf{T}}
\newcommand{\be}{\mathbf{e}}
\newcommand{\RcHom}{\mathbf{R}{\mathcal{H}om}}
\newcommand{\cExt}{\mathcal{E}xt}
\newcommand{\vv}{\mathbf{v}}
\newcommand{\s}{\mathbf{s}}
\DeclareMathOperator{\im}{Im}
\DeclareMathOperator{\HH}{H}
\DeclareMathOperator{\Bir}{Bir}
\newcommand{\ZZ}{\mathds Z}
\newcommand{\VV}{\mathds V}
\newcommand{\NN}{\mathds N}
\newcommand{\PP}{\mathds P}
\newcommand{\GG}{\mathds G}
\newcommand{\mono}{\hookrightarrow}
\newcommand{\eq}[1][r]
{\ar@<-3pt>@{-}[#1]
	\ar@<-1pt>@{}[#1]|<{}="gauche"
	\ar@<+0pt>@{}[#1]|-{}="milieu"
	\ar@<+1pt>@{}[#1]|>{}="droite"
	\ar@/^2pt/@{-}"gauche";"milieu"
	\ar@/_2pt/@{-}"milieu";"droite"}
\newcommand{\incl}[1][r]
{\ar@<-0.2pc>@{^(-}[#1] \ar@<+0.2pc>@{-}[#1]}
\newcommand{\correct}[1]{{\color{black}#1}}
\subjclass{14J60, 14J42, 14J28, 14F08}
\keywords{Moduli spaces of sheaves. K3 surface. Anti-symplectic involutions.}
\thanks{G. M. has been financed by the Marco Brunella grant of
	Burgundy University and the PRCI SMAGP
        (ANR-20-CE40-0026-01).
        D. F. was partially supported by ANR FanoHK
        ANR-20-CE40-0023, SupToPhAG/EIPHI ANR-17-EURE-0002, Bridges
        ANR-21-CE40-0017, JSPS S24043.}
\begin{document}

\title[Involutions on moduli spaces of sheaves on K3
surfaces]{Anti-symplectic involutions on moduli spaces of
  sheaves on K3 surfaces via auto-equivalences}

\author[D. Faenzi]{Daniele Faenzi} %
\address{Université Bourgogne Europe, CNRS, IMB UMR 5584, F-21000 Dijon, France}
\email{daniele.faenzi@u-bourgogne.fr}

\author[G. Menet]{Gr\'egoire Menet} %
 \address{Gr\'egoire Menet, Académie militaire de Saint-Cyr Coëtquidan,
 CReC Saint-Cyr (Centre de recherche de l'académie militaire de Saint-Cyr),
 56380 Guer, France.}
 \email{gregoire.menet@ac-amiens.fr}

\author[Y. Prieto--Monta\~{n}ez]{Yulieth Prieto--Monta\~{n}ez} %
\address{Pontificia Universidad Católica de Chile, Campus San Joaquín, Avenida Vicuña Mackenna 4860, Santiago de Chile, Chile} %
\email{yulieth.prieto@uc.cl}

\maketitle
\begin{abstract}
  We provide new examples of anti-symplectic \correct{birational} involutions
  on moduli spaces of stable sheaves on K3
  surfaces. These involutions are constructed through
  (anti) auto-equivalences of the bounded derived
  category of coherent sheaves on K3 surfaces
  arising from spherical bundles.
  We analyze these induced maps in the moduli space,
  imposing restrictions on the Mukai vector and
  considering the preservation of stability conditions.
  Our construction extends and unifies classical examples,
  such as the Beauville involutions, Markman-O'Grady
  reflections and a more recent construction by
  Beri-Manivel.
\end{abstract}

\section{Introduction}

Irreducible holomorphic symplectic manifolds (IHS) are
significant objects in algebraic geometry, notably as they play
a fundamental part in the Bogomolov decomposition Theorem
\cite{Bogomolov}.
When examining these manifolds, a natural
question arises regarding their symmetries.
Symmetries and, more specifically, involutions 
have been analyzed recently under several perspectives, for instance
derived categories (see \cite{hasset-tschinkel}) and the study of
fixed loci, see \cite{flapan-macri-ogrady-sacca:I, flapan-macri-ogrady-sacca:II}.

About classification,
determining the birational automorphism group of these manifolds is a
significant problem. A classical tool for addressing this
question is the global Torelli theorem (see \cite{Verbitsky} and
\cite[Theorem 1.3]{Markman}).
For instance, Beri and Cattaneo recently
classified birational automorphisms on the Hilbert scheme $X^{[n]}$ of
$n$ points on a K3 surface $X$ with Picard number one and genus $g \ge
3$ in terms of minimal solutions of Pell's equation
$z^2-(g-1)(n-1)y^2=1$. Their result implies that, under certain
arithmetic conditions, $\Bir(X^{[n]})\simeq \Z/2\Z$,
see \cite[Theorem 1.1]{Beri}. 

However, 
constructing birational
automorphisms explicitly or geometrically remains a challenging
problem for Hilbert schemes of points, let alone for more general
moduli space $\M(\vv)$.

In this paper, we propose an explicit construction that yields
involutions on the moduli spaces \correct{$\M(\vv)$} of stable sheaves on $X$ with
a given Mukai vector $\vv$, relying on the twist along a spherical bundle
$\cS$, under certain restrictions on $\vv$ and on the Mukai
vector of $\cS$, see Theorem \ref{MainIntro} below.
In particular, this gives birational involutions on the Hilbert scheme
$X^{[n]}$ whenever $g\equiv 2\mod 4$ and $n=\frac{g+2}{4}$
(see Remark \ref{Hilbremark}).
The construction recovers and gives a unified framework to several involutions already known in the literature, such as:
\begin{itemize}
\item
  the Beauville involutions \cite{BeauvilleRemark} (see Example \ref{Beauvilleinv});
\item
  the Markman-O'Grady involutions \cite{MarkmanMain} (see Section \ref{MarkmanInv});
\item
  the Beri-Manivel involution \cite{beri2022birational} (see Example
  \ref{BeriExample}), see also \cite{beri-manivel:more-birational}.
\end{itemize}

Let us describe briefly our approach, based on
(anti)-auto-equivalences within the bounded derived category
$\Db(X)$ of coherent sheaves on $X$. Let $\cS \in
\Db(X)$ be a spherical object and
$\bT_{\cS}$ its spherical twist; see Section
\ref{constru} for more details.
Given a line bundle $\cL$ on $X$,
we introduce the following contravariant endofunctor of $\Db(X)$, actually
an anti-auto-equivalence:
\[
  \cE \mapsto \Phi_{\cS,\cL}^p(\cE) = \RcHom(\bT_{\cS}(\cE),\cL[p]).
\]
For a fixed $d \in \mathds{Z}$, we set
\[\Phi_{\cS,d}=\Phi_{\cS,\cO_X(d)}^1.\]
The main observation is that, in some cases of interest,
$\Phi_{\cS,d}$ induces an involution on moduli spaces
of stable sheaves on $X$ with a well-chosen Mukai vector.
Well-chosen here means that only very few choices of the Mukai vector
$\s$ \correct{of the spherical bundle $\cS$} and \correct{of} the twist $d$ will work and that, $\s$ and $d$ being fixed, we have a range of good
choices for $\vv$ to get involutions on $\M(\vv)$.
The two relevant cases for this purpose are the following (see Lemmas \ref{d} and \ref{rk1v}):
\begin{itemize}
\item
  $d=0$ and $\cS=\cO_X$. 
\item
  $d=1$ and $\cS$ is a stable spherical bundle of Mukai vector $(2,H,\frac{g}{2})$. 
\end{itemize}

The first case aligns with Markman-O'Grady involutions.
In this case we just recover these well-known involutions and we only
point out that their
regularity is proved without relying on the study of the ample and
movable cones of $\M(\vv)$.
On the other hand, in the
second case our approach introduces new involutions.
The new construction obtained in this paper can be summarized in the
following result, see Theorem 
\ref{mainth} and Corollary \ref{anticor}. 

\begin{mainthm}\label{MainIntro}
Let $k$ and $g_0 \ge k(k+1)$ be integers. 
  Let $(X,H)$ be a polarized K3 surface such that \correct{$\Pic(X)=\ZZ \cdot H\oplus^{\bot} N$},
  with $H$ ample and $N$ not containing effective
  divisors. Let $\cS$ be a spherical bundle with Mukai vector
  $\vv(\cS)=(2,H,2g_0+1)$ and fix $\vv=(v_0,v_1,v_2)$ with
  $$v_0=2k+1, \qquad v_1=(k+1)H, \qquad v_2 =
  g_0(2k+3)+k+1.$$
  Then $\Phi_{\cS,1}$ defines an anti-symplectic birational involution on the moduli space of sheaves $\M(\vv)$.
\end{mainthm}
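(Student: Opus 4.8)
The plan is to treat $\Phi_{\cS,1}(-)=\RcHom(\bT_{\cS}(-),\cO_X(1)[1])$ as an \emph{involutive} anti-auto-equivalence of $\Db(X)$ and to transport this structure to $\M(\vv)$. I would begin at the level of the Mukai lattice $\widetilde{H}(X,\ZZ)$, on which $\Phi_{\cS,1}$ acts by the isometry $\phi=(-1)\cdot e^{H}\circ D\circ r_{\s}$, where $r_{\s}(x)=x+\langle x,\s\rangle\,\s$ is the reflection coming from $\bT_{\cS}$, $D$ is the duality $(r,c,s)\mapsto(r,-c,s)$, $e^{H}$ is the twist by $\cO_X(1)$, and the global sign records the shift $[1]$. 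Since $\s^2=-2$ forces $H^{2}=8g_0+2$, a direct computation with $\s=(2,H,2g_0+1)$ gives $\langle\vv,\s\rangle=-(2k+1)$ and then $\phi(\vv)=\vv$. Running the same bookkeeping on a basis of the rank-three lattice spanned by $(1,0,0),(0,H,0),(0,0,1)$, and checking that $\phi$ restricts to an involution of $N$ and of the transcendental lattice $T(X)$, yields $\phi^{2}=\id$. This is the computation that singles out the Mukai vector of the statement (compare Lemma \ref{rk1v}) and already shows that $\Phi_{\cS,1}$ is an involutive anti-auto-equivalence fixing the class $\vv$.

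Next I would identify $\Phi_{\cS,1}(\cE)$ for a general $[\cE]\in\M(\vv)$. Since $\mu(\cE)=\tfrac{(k+1)H^{2}}{2k+1}>\tfrac{H^{2}}{2}=\mu(\cS)$, stability of $\cE$ and $\cS$ forces $\Hom(\cE,\cS)=0$, hence $\Ext^{2}(\cS,\cE)=0$ by Serre duality; as $\chi(\cS,\cE)=-\langle\s,\vv\rangle=2k+1>0$, for a general $\cE$ one has $\ext^{1}(\cS,\cE)=0$, whence $\hom(\cS,\cE)=2k+1$ and $\RHom(\cS,\cE)$ is concentrated in degree $0$. I would then show that the evaluation $\Hom(\cS,\cE)\otimes\cS\xrightarrow{\mathrm{ev}}\cE$ is surjective for general $\cE$, so that its kernel $\cK$ is a locally free sheaf of rank $2k+1$ with $c_1(\cK)=kH$ (it is a second syzygy, hence reflexive, hence a bundle on the surface), and $\bT_{\cS}(\cE)=\cK[1]$. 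Feeding this into the functor gives $\Phi_{\cS,1}(\cE)=\RcHom(\cK[1],\cO_X(1)[1])=\cK^{\vee}(1)$, an honest sheaf of Mukai vector $\vv$.

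The heart of the argument is stability: I would prove that for general $\cE$ the bundle $\cK$ (equivalently $\cK^{\vee}(1)$) is Gieseker-stable, so that $\cE\mapsto\cK^{\vee}(1)$ defines a rational map $\M(\vv)\dashrightarrow\M(\vv)$. Here the hypothesis $\Pic(X)=\ZZ\cdot H\oplus^{\bot}N$ is decisive: $\mu$-stability is governed by the $H$-degree, and the absence of effective divisors in $N$ prevents the appearance of destabilizing sub- or quotient sheaves of small $H$-slope; the numerical bound $g_0\ge k(k+1)$ is then precisely what forces the relevant slope inequalities, i.e.\ guarantees that $\cK$ is not destabilized. The cleanest way to package this is through Bridgeland stability: choose $\sigma$ in the Gieseker chamber for $\vv$, so that $\M_\sigma(\vv)=\M(\vv)$, and show that $\Phi_{\cS,1}$ sends $\sigma$-stable objects of class $\vv$ to $\sigma$-stable objects of class $\vv$; then $\Phi_{\cS,1}$ is an anti-regular automorphism of $\M_\sigma(\vv)$. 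Combined with $\phi^{2}=\id$ from the first step, this produces a birational involution of $\M(\vv)$, proving Theorem \ref{mainth}.

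Finally, for anti-symplecticity (Corollary \ref{anticor}) the key point is that $\Phi_{\cS,1}$ is genuinely contravariant: the general fibre is literally the dual $\cK^{\vee}(1)$, built from a single application of $\RcHom$. The standard dichotomy says that covariant auto-equivalences induce symplectic birational maps on moduli spaces, while anti-equivalences built from an odd number of dualities reverse the holomorphic symplectic form; hence $f^{*}\sigma_{\M}=-\sigma_{\M}$. This can be confirmed by computing the induced action on the period via the Mukai isometry $\theta_\vv\colon\vv^{\bot}\xrightarrow{\sim}H^{2}(\M(\vv),\ZZ)$, the sign being the one produced by the dualization. The main obstacle throughout is the stability step of the third paragraph: ensuring that $\cK^{\vee}(1)$ stays stable for a general member, which is exactly where the bound $g_0\ge k(k+1)$ and the rigidity of $\Pic(X)$ are indispensable; the lattice identities and the identification $\Phi_{\cS,1}(\cE)=\cK^{\vee}(1)$ are then essentially formal once the relevant $\Ext$-vanishings are established.
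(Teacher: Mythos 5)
Your outline follows the same skeleton as the paper (fix $\vv$ by a Mukai-lattice computation, identify $\Phi_{\cS,1}(\cE)$ with a twisted dual of the kernel of evaluation, prove stability, deduce anti-symplecticity), but the two steps that carry all the weight are asserted rather than proved, and one of them is justified by an incorrect mechanism. First, the generic vanishing $\Ext^1(\cS,\cE)=0$ does \emph{not} follow from $\chi(\cS,\cE)=2k+1>0$ together with $\Ext^2(\cS,\cE)=0$; positivity of the Euler characteristic only bounds $\hom-\ext^1$ from below. This is a genuine Brill--Noether statement, and the paper spends Lemmas \ref{noExt} and \ref{slopestable} on it: one assumes $\Ext^1(\cS,\cE)\neq 0$ generically, builds the universal non-split extension $0\to\cS\to\cF\to\cE\to 0$, proves that $\cF$ is slope-stable (itself a delicate case analysis), and derives a contradiction from a dimension count using $\vv\cdot\s\le 0$. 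Without this (or an explicit appeal to Yoshioka's \cite[Lemma 2.6]{yoshioka:crelle}), your second paragraph has no foundation.

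Second, the stability of $\cK$ (equivalently of $\Phi_{\cS,1}(\cE)$) is deferred to an unexecuted Bridgeland argument: showing that the composite of a spherical twist, dualization and a line-bundle twist carries the Gieseker chamber for $\vv$ back to itself is precisely the kind of wall-crossing statement that needs proof, not packaging. The paper's actual mechanism is elementary and quite different: since $\mu_H(\cE)=\tfrac{k+1}{2k+1}$ sits just above $\mu_H(\cS)=\tfrac12$ with coprime numerator and denominator, any destabilizing subsheaf of $\cK\subset\Hom(\cS,\cE)\otimes\cS$ has its $H$-degree pinched into an interval containing no admissible integer unless it is a copy of $\cS^{\oplus\alpha}$, which is excluded because $\Hom(\cS,\cK)=0$; the hypothesis on $\Pic(X)$ enters only to reduce slopes to the $H$-component. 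Relatedly, you misattribute the role of $g_0\ge k(k+1)$: it is equivalent to $\vv^2\ge 0$ (non-emptiness of $\M(\vv)$) and plays no part in the slope inequalities. Two smaller points: the paper never claims (nor needs) surjectivity of the evaluation map, only that its cokernel is $0$-dimensional, which suffices since $\Phi(\cE)$ then differs from $\cK^\vee(H)$ in dimension zero; and the ``standard dichotomy'' for anti-equivalences must be applied with care here because $\Phi_{\cS,1}$ may be undefined along a divisor, which is why the paper computes the action of $\theta$ on $H^2$ restricted to the open locus and concludes anti-symplecticity by showing the action is $-\id$ on the transcendental lattice.
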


Here, $(X,H)$ is a polarized K3 surface of genus $4g_0+2$ and the
condition $g_0 \ge k(k+1)$ amounts to $\vv^2 \ge 0$.
The proof that the involution is well-defined on $\M(\vv)$, which is
to say, that $\Phi_{\cS,1}(\cE)$ is stable for a sufficiently general
sheaf $\cE$ in $\M(\vv)$, is the
point that requires more effort. To achieve this, the first obstacle
is to show that,
for a generic element $\cE$ of $\M(\vv)$ one has $\Ext^1(\cS,\cE)=0$. 
This holds in the above range \correct{of values of $g_0$ and $k$}, essentially owing to results of Yoshioka,
  \cite{yoshioka:crelle}.
Once this is done, we show that applying \correct{the functor $\Phi_{\cS,1}$} indeed gives a
sheaf with the required invariants
and check that this sheaf
is stable. This forces the above choice of $v_2$, which agrees with
the requirement that $\vv=(v_0,v_1,v_2)$ is indeed a fixed vector of the cohomological reflection associated with our
construction \correct{(to be more precise, the condition on $v_2$ is equivalent to the condition $\vv(\Phi_{\cS,1}(\cE))=\vv$ for $\cE\in \M(\vv)$, see Lemmas \ref{v} and \ref{rk1v})}.
Finally, to check that the involution is anti-symplectic, we study the action on the cohomology, see
Proposition \ref{action} for a more detailed statement.

We note that, for $k=0$, our theorem provides involutions on the
Hilbert scheme of points $X^{[n]}$, with $n=(g+2)/4$ for all $g \equiv
2$ modulo $4$, generalizing the construction of \cite{beri2022birational}.

\correct{
  It might even be that any birational involution on \correct{$\M(\vv)$} should be 
induced by an endofunctor of $\Db(X)$ obtained as composition of
spherical twists, tensor product by line bundles, duality and
shifts, together with pull-backs of involutions of $X$ itself.
However, at this stage this speculation is based only on our knowledge of the examples of
birational involutions constructed so far.}

In any concrete case, an obstacle to check that \correct{the functor $\Phi_{\cS,d}$} induces a
birational (respectively, biregular) involution is to check that  
$\Ext^1(\cS,\cE)$ vanishes for
sufficiently general (respectively, for all) sheaves $\cE$ in
$\M(\vv)$, for suitable choices of $\vv$.
This interpolation problem, tightly connected to Brill-Noether theory,
has been treated for generic sheaves in recent and less recent times,
see for instance
\cite{CosNueYos21, yoshioka:brill-noether-rims, MarkmanMain, yoshioka:crelle}.
\correct{Although not much is
known about the vanishing of $\Ext^1(\cS,\cE)$ for an arbitrary sheaf $\cE\in \M(\vv)$, we prove that $\Ext^1(\cS,\cE)=0$ for a particular restricted class of such $\cE\in \M(\vv)$ in Lemma \ref{no H1}.}

The paper is structured as follows. In Section
\ref{preliminares}, we introduce the main concepts and results
related to the moduli space of sheaves on K3 surfaces,
highlighting the role and some properties of (anti)
auto-equivalences of derived categories. In Section
\ref{constru}, we delve into the auto-equivalence
$\Phi_{\cS,d}$ within $\Db(X)$ and explore the
necessary conditions for obtaining an involution on moduli
spaces of stable sheaves. Section \ref{existence} establishes
the existence of the involutions presented in Theorem
\ref{MainIntro}. Additionally, we obtain the involutions
introduced by Markman and studied by O'Grady. Section \ref{examples} describes
several examples; notably, we \correct{prove} that the Beri--Manivel
involution corresponds to $\Phi_{\cS,1}$ within the
context of its action on the moduli space of stable sheaves
$\M(\vv)$, involving a suitable vector $\vv$ and $g$, see Example
\ref{BeriExample}. 
Finally, the action on cohomology is described in Sections \ref{antisympl} and \ref{cohomology}.

\bigskip

\noindent\textbf{Acknowledgements.} 
We would like to express our gratitude
to G. Mongardi for providing specific comments on an earlier
version of the paper. We express our deep gratitude toward P. Beri, L.
Manivel and K. Yoshioka for many valuable
comments and suggestions.

\section{Preliminaries}\label{preliminares}

Here we give a short account of the material needed to set up
our construction, by recalling some fundamental facts about moduli
spaces of semistable sheaves on a projective K3 surface $X$ and
describing the basic equivalences of the derived category of
$X$ and their action on the cohomology of $X$.

\subsection{Moduli space of sheaves on K3 surfaces}\label{notata}
The moduli space of semistable sheaves on a projective K3
surface $X$      
stands as a fundamental example in the study of irreducible
holomorphic symplectic (IHS)
manifolds.
Let $H$ be an ample divisor on $X$ and set $g$ for the genus
of $H$, namely $H^2=2g-2$, so $(X,H)$ is a polarized K3 surface \correct{of genus $g$}.
Let $\HH^*(X,\Z)=\HH^0(X,\Z)\oplus \HH^2(X,\Z)\oplus
\HH^4(X,\Z)$ endowed with the pairing $\cdot$ \correct{be 
the Mukai lattice associated with $X$}. For a
coherent sheaf $\cE$ on $X$, its Mukai vector
$\vv(\cE)=(v_0,c_1,v_2)$ in  
$\HH^*(X,\Z)$ satisfies
$$v_0=\rk(\cE), \qquad c_1=c_1(\cE), \qquad v_2=\chi(\cE) -
\rk(\cE).
$$
This extends to any objects of the derived category $\Db(X)$.
 Applying Riemann-Roch we obtain
$v_2=\rk(\cE)+c_1(\cE)^2/2-c_2(\cE)$. 

The degree of $\cE$, denoted by $\deg_H(\cE)$, is defined as
the normalized intersection of $H$ with the first Chern class of $\cE$,
  i.e., $\deg_H(\cE) \coloneqq \frac {1}{H^2} H\cdot c_1(\cE)$. The slope of $\cE$,
denoted by $\mu_H(\cE)$, is defined
as $$\mu_H(\cE)\coloneqq\frac{\deg_H(\cE)}{\rk(\cE)}, \text{ if } \rk(\cE)
>0.$$ 

The moduli space $\M_H(\vv)$ consists of $H$-semistable
(torsion-free) sheaves on $X$ with Mukai vector $\vv$. \correct{We will omit $H$ from the notation and simply write $\M(\vv)$, since
we will always work with the polarized surface $(X,H)$, the reference
to $H$ being implicit.} The
dimension of $\M(\vv)$ is given by $\vv^2+2 =
c_1^2-2v_0v_2+2$.
Thanks to the contributions of various
authors, including Mukai, G\"ottsche-Huybrechts, O'Grady, and
Yoshioka (see \cite[Proposition 5.1, Theorem 8.1]{Yos01}, it
has been established that under specific restrictions on the
Mukai vector $\vv$, the moduli space $\M(\vv)$ is an
IHS manifold of dimension $2n$ with the same deformation type
as a Hilbert scheme $X^{[n]}$ of $n$ points on a K3 surface $X$.

\subsection{(Anti)-equivalences and their action on the Mukai lattice}\label{introaction}
\label{equivalences}

Let us briefly recall some basic
(anti)-auto-equivalences of $\Db(X)$ and review their action
on the cohomology of the surface $X$.
\subsubsection{Duality}
Duality gives an involutive functor $\Db(X) \to
\Db(X)^{\text{op}}$, i. e. a contravariant functor $\Db(X) \to
\Db(X)$. This is defined by
\[
  \cE \mapsto
  \RcHom(\cE,\cO_X),
\]
where the right-hand side is seen as an object of $\Db(X)$, see \cite[Corollary
5.29]{huybrechts:fourier-mukai}.
The induced mapping at the level of Mukai vectors is
the automorphism $D$ of $\HH^*(X,\Z)$ defined
as $$D:(w_0,c_1,w_2)\mapsto (w_0,-c_1,w_2).$$

\subsubsection{Twisting by a line bundle}

Let $\cL$ be a
line bundle on $X$. Tensoring by $\cL$ gives an
auto-equivalence of $\Db(X)$ defined by sending an object
$\cE$ to $\cE \otimes_{\cO_X} \cL$.
The induced action on Mukai vectors
is
$\cdot \ch(\cL): \HH^*(X,\Z)\rightarrow \HH^*(X,\Z)$, 
sending the Mukai vector of a sheaf $\cF$ to the Mukai vector
of $\cF\otimes\cL$. For
$\cL=\cO_X(dH)$,
we have
{$$
{\cdot \ch(\cO_X(dH))}: (w_0,c_1,w_2) \mapsto
(w_0,c_1+dw_0H,w_2+d^2(g-1)w_0+dc_1\cdot H).$$

\subsubsection{Shifts}

The shift functor on $\Db(X)$ sending
an object $\cE$ to $\cE[1]$ has an induced 
action on cohomology assigning to a Mukai vector $\vv$ its opposite $-\vv$.

\subsubsection{Spherical twists}

Let $\cS$ be an
object of $\Db(X)$. We define $\bT_{\cS}$ as the left mutation
endofunctor on $\Db(X)$ with respect to $\cS$. This
functor applied to an object $\cE$, is the
cone of the natural evaluation morphism
$\be_{\cS,\cE}$, hence we have a distinguished triangle
\begin{equation}
  \label{TS}
  \RHom(\cS,\cE) \otimes
  \cS \xrightarrow{\be_{\cS,\cE}} \cE \to \bT_\cS(\cE)  
\end{equation}

Here, if $\cS$ is not concentrated in a single degree, the
tensor product should be derived as well, however
in this paper $\cS$ will always be a vector bundle.
If $\cS$ is spherical in the sense of \cite[Chapter
16]{huybrechts:K3}, then $\bT_{\cS}$ is an
equivalence, known as a spherical twist (we refer to
\cite[Exercise 8.5 and Proposition
8.6]{huybrechts:fourier-mukai}). The induced action on
$\HH^*(X,\ZZ)$ is given by the reflection along the
hyperplane orthogonal to the $(-2)$-class given by the Mukai
vector $\vv(\cS)$ on $\HH^*(X,\Z)$, see \cite[Lemma
8.12]{huybrechts:fourier-mukai}. 
More explicitly, 
we set $R_{\cS}:\HH^*(X,\Z)\rightarrow \HH^*(X,\Z):x\mapsto x+\left(x\cdot\vv(\cS)\right) \vv(\cS)$. 
Then, according to \cite[Exercise 8.5 and Lemma 8.12]{huybrechts:fourier-mukai}, we have:
$$\vv(\bT_{\cS}(\cE))=-R_{\cS}(\vv(\cE)).$$

\section{Construction of the involution}\label{constru}

Let $X$ be a smooth projective K3 surface polarized by
$H$  and let $\cS$ be a
spherical object of \correct{$D^b(X)$} and $d \in \ZZ$. Here we define an 
anti-auto-equivalence $\Phi_{\cS,d}$ of $\Db(X)$ and show that, under suitable
conditions on $\Pic(X)$ and on the Mukai vectors of $\cS$ and $\vv$, 
the functor $\Phi_{\cS,d}$ defines a birational or even
biregular involution of the moduli space $\M(\vv)$ of
$H$-semistable sheaves on $X$ with Mukai vector $\vv$.

\subsection{General construction as an equivalence of derived categories}\label{31construction}

\correct{The functor $\Phi_{\cS,\cL}^p$ is given as the}
composition of endofunctors of $\Db(X)$ described in Section \ref{equivalences}.
\begin{defi}\label{maindefi}
  Let $\cL \in \Pic(X)$. Set  $\Phi_{\cS,\cL}^p$ for the
  contravariant endofunctor of $\Db(X)$:
  \[
    \Phi_{\cS,\cL}^p:            \cE \mapsto \Phi_{\cS,\cL}(\cE) = \RcHom(\bT_{\cS}(\cE),\cL[p]).
  \]
  \correct{We omit $p$ when $p=1$}. 
For fixed $H \in \Pic(X)$, given $d \in \ZZ$ we write $ \Phi_{\cS,d}=
  \Phi_{\cS,\cO_X(dH)}^1$, so
  \[
    \Phi_{\cS,d}(\cE) = \RcHom(\bT_{\cS}(\cE),\cO_X(dH)[1]).
  \]
For simplicity, we often abbreviate
$\Phi_{\cS,\cL}^p$ to $\Phi_{\cS,\cL}$ or just $\Phi$.
\end{defi}

The functor $\Phi_{\cS,\cL}^p$ is an anti-auto-equivalence 
of $\Db(X)$ as \correct{it is a}
composition of equivalences and duality (see Section
\ref{equivalences}).
\correct{We now show that :}
\begin{lemma} \label{involutive functor}
  If $\cS \simeq \RcHom(\cS, \cL[q])$ for some $q \in \ZZ$, then
  $\Phi=\Phi_{\cS,\cL}^p$ 
  is involutive for all $p \in \ZZ$.
  If $\Pic(X) \simeq \ZZ\cdot H$ and $\cS$ a torsion-free sheaf of
  rank $s_0$, this
  happens if and only if 
  \begin{equation} \label{rank = 1 or 2}
    s_0 \in \{1,2\} \qquad \mbox{and} \qquad    \cL \simeq \det(\cS)^{\frac 2{s_0}}.
  \end{equation}
\end{lemma}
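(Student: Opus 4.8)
The statement has two parts: a general involutivity criterion, and its explicit translation when $\Pic(X)=\ZZ\cdot H$. For the first part, write $\mathsf{D}=\RcHom(-,\cO_X)$ for the duality anti-equivalence, so that $\Phi^p_{\cS,\cL}(\cE)=\mathsf{D}(\bT_{\cS}(\cE))\otimes\cL\,[p]$ and hence $\Phi=(-\otimes\cL)\circ\mathsf{D}\circ\bT_{\cS}$ up to the shift $[p]$. The plan is to compute $\Phi^2$ by commuting $\mathsf{D}$ and $-\otimes\cL$ past the spherical twist, using three relations: (i) the conjugation formula $\psi\circ\bT_{\cS}\circ\psi^{-1}\simeq\bT_{\psi(\cS)}$ for the autoequivalence $\psi=-\otimes\cL$; (ii) its contravariant analogue $\mathsf{D}\circ\bT_{\cS}\circ\mathsf{D}\simeq\bT_{\mathsf{D}(\cS)}^{-1}$; and (iii) $\bT_{\cS[n]}\simeq\bT_{\cS}$, which holds because shifting the twisting object does not change the triangle \eqref{TS}. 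Relation (ii) is the technical heart: applying the contravariant $\mathsf{D}$ to \eqref{TS}, using $\mathsf{D}(V\otimes\cS)\simeq V^\vee\otimes\mathsf{D}(\cS)$ for a complex of vector spaces $V$ together with the identity $\RHom(\cS,\cE)\simeq\RHom(\mathsf{D}\cE,\mathsf{D}\cS)$ coming from contravariance of $\mathsf{D}$, one recognises the resulting rotated triangle as the defining (co-evaluation) triangle of the inverse twist $\bT_{\mathsf{D}(\cS)}^{-1}$.

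Granting (i)--(iii), together with $\mathsf{D}^2\simeq\id$, $\mathsf{D}(\cF\otimes\cL)\simeq\mathsf{D}(\cF)\otimes\cL^{-1}$, and the shift compatibilities (every functor commutes with $[1]$, while $\mathsf{D}$ sends $[1]$ to $[-1]$), a direct bookkeeping collapses $\Phi^2$ to $\bT_{\mathsf{D}(\cS)\otimes\cL}^{-1}\circ\bT_{\cS}$. In particular the shift $[p]$ cancels, which is exactly why involutivity is independent of $p$. Now the hypothesis $\cS\simeq\RcHom(\cS,\cL[q])=(\mathsf{D}(\cS)\otimes\cL)[q]$ says precisely that $\mathsf{D}(\cS)\otimes\cL\simeq\cS[-q]$, so by (iii) we get $\bT_{\mathsf{D}(\cS)\otimes\cL}=\bT_{\cS[-q]}=\bT_{\cS}$, and therefore $\Phi^2\simeq\bT_{\cS}^{-1}\circ\bT_{\cS}\simeq\id$.

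For the characterisation under $\Pic(X)=\ZZ\cdot H$, I would argue on Mukai vectors. Write $\vv(\cS)=(s_0,\ell H,v_2)$ and $\cL=\cO_X(bH)$. Passing to Mukai vectors in $\cS\simeq\RcHom(\cS,\cL[q])$ and comparing ranks forces $q$ to be even, so the condition reduces to $\vv(\cS)=D(\vv(\cS))\cdot\ch(\cL)$. Using the explicit formulas of Section~\ref{equivalences}, the degree-two component gives $s_0 b=2\ell$, that is $\cL\simeq\det(\cS)^{2/s_0}$, while the degree-four component is then automatic. The key arithmetic input is sphericity: $\vv(\cS)^2=-2$ reads $s_0 v_2-(g-1)\ell^2=1$, so any common divisor of $s_0$ and $\ell$ divides $1$; hence $\gcd(s_0,\ell)=1$, and combined with $s_0\mid 2\ell$ this forces $s_0\mid 2$, i.e. $s_0\in\{1,2\}$.

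For the converse, assume $s_0\in\{1,2\}$ and $\cL\simeq\det(\cS)^{2/s_0}$; I would exhibit the isomorphism directly with $q=0$. First note that a spherical torsion-free sheaf on a K3 surface is locally free (rigid sheaves are locally free), so $\cS$ is a bundle. If $s_0=1$, rigidity forces $\cS$ to be a line bundle $M$, and $\cS^\vee\otimes\cL\simeq M^{-1}\otimes M^{\otimes 2}\simeq\cS$; if $s_0=2$, the tautological identity $\cS^\vee\simeq\cS\otimes\det(\cS)^{-1}$ gives $\cS^\vee\otimes\det(\cS)\simeq\cS$, and here $\cL\simeq\det(\cS)$. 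In both cases $\cS\simeq\RcHom(\cS,\cL)$, as required. I expect the main obstacle to be relation (ii) of the first part, namely matching the dualised triangle against the inverse-twist triangle correctly; by comparison, the rank analysis is elementary once sphericity is used to produce $\gcd(s_0,\ell)=1$.
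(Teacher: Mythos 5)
Your proof is correct and follows essentially the same route as the paper: involutivity is reduced to the Seidel--Thomas fact that the dual twist inverts the spherical twist (your relation $\mathsf{D}\circ\bT_\cS\circ\mathsf{D}\simeq\bT_{\mathsf{D}(\cS)}^{-1}$, combined with the conjugation by $-\otimes\cL$, is exactly the paper's natural isomorphism $\bT_\cS'(\RcHom(\cE,\cL))\simeq\RcHom(\bT_\cS(\cE),\cL)$ in disguise), and the rank restriction comes from combining $s_0\,c_1(\cL)=2\,c_1(\cS)$ with $\vv(\cS)^2=-2$. Your observation that $\gcd(s_0,\ell)=1$ forces $s_0\mid 2$ is a slightly cleaner way to conclude than the paper's prime-by-prime analysis of $s_0^2d^2(g-1)=4(s_0s_2-1)$, but the substance is identical.
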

\begin{proof}
  Recall from \cite[Definition 2.7]{seidel-thomas} the notion of dual spherical twist $\bT_\cS'$, sending an object $\cE$ of $\Db(X)$
  to the cone of the dual evaluation map:
  \[
    \cE \to \RHom(\cE,\cS)^\vee \otimes \cS.
  \]
  According to 
  \cite[Proposition 2.10]{seidel-thomas}, $\bT_\cS \circ \bT_\cS'$ and
  $\bT_\cS' \circ \bT_\cS $ have natural transformations to the
  identity functor.
  Under the condition $\cS \simeq \RcHom(\cS, \cL[q])$, for any object $\cE$ of $\Db(X)$ we have a natural isomorphism:
  \[
    \bT_\cS'(\RcHom(\cE,\cL)) \simeq \RcHom(\bT_\cS(\cE),\cL).
  \]
This implies that $\Phi (\Phi(\cE))$ is naturally isomorphic to $\cE$.
  
	If $\cS$ is locally free and \eqref{rank = 1 or 2} holds, then $\cS
  \simeq \cS^\vee \otimes \cL \simeq \RcHom(\cS, \cL)$ so that $\Phi$
  is involutive.
  Conversely, assume that $\cS$ is a torsion-free sheaf,
  suppose $\Pic(X) \simeq H \cdot \ZZ$ for some ample divisor $H$,
  with $H^2=2g-2 \ge 2$. Write $\s=(s_0,s_1H,s_2)$ for the Mukai
  vector of $\cS$.
  The assumption $\cS \simeq \RcHom(\cS, \cL[q])$ yields $q=0$
  and we get that $\cS$ is locally free.
  Moreover, writing $c_1(\cS)=s_1H$ and $c_1(\cL)=d
  H$, we get $s_0d = 2s_1$.
  Then, the condition $\s^2=-2$ reads
  \[
    s_0^2d^2(g-1)=4(s_0s_2-1).
  \]

  For any prime divisor $p > 2$ of $s_0$, the right-hand-side is
  non-zero modulo $p$, which is a contradiction.
  Hence $s_0=2^q$ for some $q \ge 0$ and again
  the right-hand-side is non-zero modulo $2^q$ if $q > 2$. Similarly
  one excludes the case $q=2$ and we are left with $s_0 \in \{1,2\}$.
  Now \eqref{rank = 1 or 2} follows from $s_0d = 2s_1$.
\end{proof}

The image of a semistable sheaf by $\Phi$ 
may not be a coherent sheaf, and even then, it is not necessarily
semistable.
Here we provide sufficient conditions for $\Phi(\cE)$ to be a coherent sheaf.

\begin{lemma}\label{mainlemma}
  \correct{Set $p=1$, choose a spherical sheaf $\cS$ and a line bundle $\cL$ on $X$. Assume that:
  \begin{enumerate}[label=\roman*)]
  \item \label{main-i} the sheaves
    $\cE$ and $\cS$ are torsion-free;
  \item \label{main-ii} we have the vanishing condition \label{vanishing 1 et 2}
    $\Ext^1(\cS,\cE)=\Ext^2(\cS,\cE)=0$;
  \item \label{main-iii} the sheaf 
    $\coker\left(\be_{\cS,\cE}:\Hom(\cS,\cE) \otimes \cS \to \cE\right)$ is a torsion sheaf.
  \end{enumerate}}
  Then $\Phi(\cE)=\Phi_{\cS,\cL}^1(\cE)$ is a sheaf that fits in the following exact sequence:
  \begin{equation}\label{eq: the map phi}
    0 \rightarrow   \cE^\vee\otimes \cL \rightarrow
    \Hom(\cS,\cE)^\vee \otimes \cS^\vee \otimes \cL \rightarrow  \Phi_{\cS,\cL}(\cE) \rightarrow \cExt^1(\cE,\cL) \rightarrow 0.
  \end{equation}
\end{lemma}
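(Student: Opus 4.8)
The plan is to compute $\Phi_{\cS,\cL}(\cE)=\RcHom(\bT_\cS(\cE),\cL[1])$ by first analyzing the spherical twist $\bT_\cS(\cE)$ under the hypotheses, and then applying the duality functor, tracking the cohomology sheaves at each stage via a hypercohomology spectral sequence. First I would look at the defining triangle \eqref{TS} for $\bT_\cS(\cE)$. Because $\cE$ and $\cS$ are torsion-free sheaves, $\RHom(\cS,\cE)$ is computed by the groups $\Ext^i(\cS,\cE)$, and hypothesis \ref{main-ii} kills $\Ext^1$ and $\Ext^2$, so only $\Hom(\cS,\cE)=\Ext^0(\cS,\cE)$ survives. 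Thus the evaluation map reduces to the sheaf morphism $\be_{\cS,\cE}\colon\Hom(\cS,\cE)\otimes\cS\to\cE$ placed in degree $0$, and the cone $\bT_\cS(\cE)$ has cohomology sheaves concentrated in degrees $-1$ and $0$, namely $\cH^{-1}(\bT_\cS(\cE))=\ker(\be_{\cS,\cE})$ and $\cH^{0}(\bT_\cS(\cE))=\coker(\be_{\cS,\cE})$. Since $\cS$ and $\cE$ are torsion-free and $\be_{\cS,\cE}$ is a map to a torsion-free sheaf, the kernel is a subsheaf of a torsion-free sheaf, hence torsion-free; and hypothesis \ref{main-iii} says the cokernel is torsion.

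Next I would apply the contravariant duality $\RcHom(-,\cL[1])$ to the two-term complex $\bT_\cS(\cE)$, equivalently the two-term complex $[\,\Hom(\cS,\cE)\otimes\cS\to\cE\,]$ in degrees $-1,0$. The key computation is the local-to-global behavior: for a torsion-free (hence reflexive, on a smooth surface, up to the standard codimension argument) or more generally coherent sheaf $\cF$ on a smooth surface, $\RcHom(\cF,\cL)$ has cohomology sheaves $\cExt^i(\cF,\cL)$ for $i=0,1,2$, with $\cExt^i$ vanishing for $i>\codim$ of the locus where $\cF$ fails to be locally free, so $\cExt^2$ is supported in dimension $0$ and $\cExt^1$ in dimension $\le 0$. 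Applying $\RcHom(-,\cL)$ to the triangle
\[
  \Hom(\cS,\cE)\otimes\cS \xrightarrow{\be_{\cS,\cE}} \cE \to \bT_\cS(\cE)\xrightarrow{+1}
\]
and using that $\cS$ and $\cE$ are locally free away from finitely many points (their $\cExt^{\ge 1}$ with $\cL$ being torsion), I would extract the long exact sequence of $\cExt$-sheaves. The terms $\RcHom(\cE,\cL)$ and $\RcHom(\Hom(\cS,\cE)\otimes\cS,\cL)=\Hom(\cS,\cE)^\vee\otimes\RcHom(\cS,\cL)$ contribute $\cE^\vee\otimes\cL$ and $\Hom(\cS,\cE)^\vee\otimes\cS^\vee\otimes\cL$ in degree $0$ (after the dualization-of-a-line-bundle bookkeeping $\RcHom(\cF,\cL)\cong\cF^\vee\otimes\cL$ for locally free $\cF$), together with the correction term $\cExt^1(\cE,\cL)$; assembling these and reading off the single shift by $[1]$ yields the four-term exact sequence \eqref{eq: the map phi}, and confirms in particular that $\Phi_{\cS,\cL}(\cE)$ is concentrated in a single degree, i.e.\ is an honest sheaf.

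The main obstacle I expect is the careful bookkeeping of the spectral sequence (or the iterated long exact sequences of $\cExt$-sheaves) to verify that all the off-diagonal $\cExt$ contributions either cancel or land in the predicted spots, so that $\Phi(\cE)$ genuinely sits in cohomological degree $0$ with the stated four-term resolution, rather than having spurious higher cohomology. Concretely, one must check that the evaluation map $\be_{\cS,\cE}$ dualizes to the \emph{injective} map $\cE^\vee\otimes\cL\hookrightarrow\Hom(\cS,\cE)^\vee\otimes\cS^\vee\otimes\cL$ — injectivity follows because the original map is generically surjective (its cokernel being torsion, by \ref{main-iii}), so its dual is generically injective, and the source $\cE^\vee\otimes\cL$ being torsion-free then forces genuine injectivity — and that the remaining cokernel of this dual map, extended by the contribution $\cExt^1(\cE,\cL)$ coming from the failure of $\cE$ to be locally free, reassembles exactly into $\Phi_{\cS,\cL}(\cE)$. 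The vanishing hypotheses \ref{main-ii} are precisely what guarantee $\bT_\cS(\cE)$ is a two-term complex with no contribution in degree $+1$ or lower than $-1$, which is essential for the duality to produce a sheaf in a single degree.
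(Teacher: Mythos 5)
Your proposal follows essentially the same route as the paper: use hypothesis ii) to reduce $\RHom(\cS,\cE)$ to $\Hom(\cS,\cE)$ in degree $0$, apply $\RcHom(-,\cL[1])$ to the triangle \eqref{TS}, read off the long exact sequence of cohomology sheaves, and use hypothesis iii) to get injectivity of the dualized evaluation map, which yields \eqref{eq: the map phi}. The one point to tighten: you only claim $\cExt^2(\cE,\cL)$ is supported in dimension $0$, but since $\cH^1(\Phi(\cE))\simeq\cExt^2(\cE,\cL)$ you need it to \emph{vanish} for $\Phi(\cE)$ to be a sheaf — this holds because a torsion-free sheaf on a smooth surface has depth $\ge 1$ at every point, hence homological dimension $\le 1$, which is exactly how the paper argues.
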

\begin{proof}
  Recall that $\cS$ is locally free.
  Then, applying $\RcHom(-,\cL)$ to the distinguished
  triangle \eqref{TS},  we obtain the distinguished triangle:
  \begin{equation*} 
    \RcHom(\cE,\cL) \rightarrow \RHom(\cS,\cE)^\vee
    \otimes \cS^\vee \otimes \cL \rightarrow \Phi(\cE).
  \end{equation*}
  
  Taking homology leads to a long exact sequence:
  \begin{equation}\label{simple}
    \cdots \to \cExt^k(\cE,\cL) \to \Ext^k(\cS,\cE)
    \otimes \cS^\vee \otimes \cL \to \cH^k(\Phi(\cE)) \to
    \cExt^{k+1}(\cE,\cL) \to \cdots
  \end{equation}
  Since $\coker(\be_{\cS,\cE})$ is a torsion sheaf, the
  transpose of $\be_{\cS,\cE}$ is an injective map
  \[
    \cHom(\cE,\cL) \to \Hom(\cS,\cE) \otimes
    \cS^\vee \otimes \cL.
  \]
  Since the kernel of this map is $\cH^{-1}(\Phi(\cE))$,
  we get $\cH^{-1}(\Phi(\cE))=0$.
  Moreover since $\cE$ is torsion-free, $\cExt^p(\cE,\cL)=0$ for all $p\geq 2$. 
  Hence, under the assumption \ref{vanishing 1 et 2},
  $\cH^p(\Phi(\cE))$ vanishes for all $p\in\Z^*$ and
  $\Phi(\cE) \simeq \cH^0(\Phi(\cE))$. 
  In other words, $\Phi(\cE)$ is a coherent sheaf and
  \eqref{simple} for $k=0$ provides the desired exact sequence.
\end{proof}

\begin{lemma} \label{d} \correct{Under the assumptions of Lemma
  \ref{mainlemma}}, let $\cN \in \Pic(X)$. Set:
  \[\cL'=\cL\otimes \cN^{\otimes 2}, \qquad \cS'=\cS\otimes
    \cN, \qquad            \vv'=\vv \cdot \ch(\cN)
  \]
  and define $f:\M(\vv) \to \M(\vv')$ by
  sending $\cE$ to $\cE'=\cE \otimes \cN$.
  \correct{If $\Phi_{\cS,\cL}(\cE) \in \M(\vv)$, then
  $\Phi_{\cS',\cL'}(\cE') \in \M(\vv')$ and:
  $$\Phi_{\cS',\cL'}(\cE')=f \circ \Phi_{\cS,\cL}\circ f^{-1}(\cE').$$}
\end{lemma}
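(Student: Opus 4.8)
The plan is to reduce both assertions to a single functorial identity in $\Db(X)$, namely $\Phi_{\cS',\cL'}(\cE')\simeq\Phi_{\cS,\cL}(\cE)\otimes\cN$, and then read off the two claims from it. The first ingredient is the compatibility of the spherical twist with tensoring by a line bundle. Since $-\otimes\cN$ is an auto-equivalence, for any object $\cE$ one has $\RHom(\cS',\cE')=\RHom(\cS\otimes\cN,\cE\otimes\cN)\simeq\RHom(\cS,\cE)$, and the evaluation map $\be_{\cS',\cE'}$ is obtained from $\be_{\cS,\cE}$ by applying $-\otimes\cN$. Taking cones in the triangle \eqref{TS} I would then conclude
\[
  \bT_{\cS'}(\cE')\simeq \bT_{\cS}(\cE)\otimes\cN.
\]
One also notes in passing that $\cS'=\cS\otimes\cN$ is again a spherical torsion-free sheaf, being the image of $\cS$ under an equivalence, so that $\Phi_{\cS',\cL'}$ is well defined.

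The second ingredient is the interaction of duality with tensoring by $\cN$, through the standard natural isomorphisms $\RcHom(A\otimes\cN,B)\simeq\RcHom(A,B)\otimes\cN^{-1}$ and $\RcHom(A,B\otimes\cN^{\otimes 2})\simeq\RcHom(A,B)\otimes\cN^{\otimes 2}$. Applying these with $A=\bT_{\cS}(\cE)$ and $B=\cL[1]$, and using $\cL'=\cL\otimes\cN^{\otimes 2}$ together with the previous step, I would compute
\[
  \Phi_{\cS',\cL'}(\cE')=\RcHom\!\big(\bT_{\cS}(\cE)\otimes\cN,\ \cL\otimes\cN^{\otimes 2}[1]\big)\simeq \RcHom(\bT_{\cS}(\cE),\cL[1])\otimes\cN=\Phi_{\cS,\cL}(\cE)\otimes\cN,
\]
the powers of $\cN$ balancing as $-1+2=1$. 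This is exactly the desired identity.

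Finally I would translate this into the statement. Tensoring by $\cN$ preserves (semi)stability and shifts the Mukai vector by $\cdot\,\ch(\cN)$, so $f=-\otimes\cN$ is a well-defined isomorphism $\M(\vv)\to\M(\vv')$ with inverse $-\otimes\cN^{-1}$, and $f^{-1}(\cE')=\cE$. The identity above then reads $\Phi_{\cS',\cL'}(\cE')=\Phi_{\cS,\cL}(\cE)\otimes\cN=f\circ\Phi_{\cS,\cL}\circ f^{-1}(\cE')$. Moreover, if $\Phi_{\cS,\cL}(\cE)\in\M(\vv)$, its image under $f$ lies in $\M(\vv')$, so $\Phi_{\cS',\cL'}(\cE')\in\M(\vv')$; in particular the right-hand side is an honest semistable sheaf, which simultaneously shows that $\Phi_{\cS',\cL'}(\cE')$ is a sheaf, without re-verifying the hypotheses of Lemma \ref{mainlemma} for the primed data.

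The computation is essentially formal, so the only real care is needed in the \emph{naturality} of the isomorphisms above — in particular, in checking that the evaluation maps defining the two spherical twists correspond to one another under $-\otimes\cN$, rather than being merely abstractly isomorphic objects. I expect this naturality, together with the bookkeeping of the three places where $\cN$ enters (through $\cS'$, through $\cL'$, and through the internal $\RcHom$), to be the only point demanding genuine attention.
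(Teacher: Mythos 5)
Your proposal is correct and follows essentially the same route as the paper: both arguments hinge on the identity $\bT_{\cS'}(\cE\otimes\cN)\simeq\bT_{\cS}(\cE)\otimes\cN$, deduce from it that $\Phi_{\cS',\cL'}(\cE')\simeq\Phi_{\cS,\cL}(\cE)\otimes\cN$ (the paper passes through $\RcHom(\bT_{\cS}(\cE),\cL\otimes\cN[p])$, which is your balancing of the powers of $\cN$), and then transport the conclusion through the isomorphism $f=-\otimes\cN$ of moduli spaces. Your extra remarks on naturality of the evaluation maps and on not needing to re-verify the hypotheses of Lemma \ref{mainlemma} for the primed data are consistent with, and slightly more explicit than, the paper's phrasing.
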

\begin{proof}
  \correct{Let us first work regardless of the assumption $p=1$ and
    of the conditions \ref{main-i}, \ref{main-ii} and \ref{main-iii} of Lemma
  \ref{mainlemma}. For any object $\cE$ of $\Db(X)$ we have
  \[
    \bT_{\cS'}(\cE \otimes \cN) \simeq   \bT_\cS(\cE) \otimes\cN.
  \]
  Therefore, for all $p \in \ZZ$, we get:
  \begin{align*}
    \Phi_{\cS',\cL'}^p(\cE \otimes \cN)&= \RcHom(\bT_{\cS'}(\cE \otimes \cN), \cL'[p])
                                         \simeq \\
    & \simeq
    \RcHom(\bT_{\cS}(\cE), \cL \otimes \cN[p]) = \Phi_{\cS,\cL}^p(\cE) \otimes \cN.
  \end{align*}
  Now, for $\cE \in \M(\vv)$, if $p=1$ and the conditions \ref{main-i}, \ref{main-ii} and \ref{main-iii} of Lemma  \ref{mainlemma} hold, then the
  object $\Phi_{\cS,\cL}^1(\cE)$ is a coherent sheaf.
  It this sheaf lies in $\M(\vv)$ then $\Phi_{\cS,\cL}(\cE)\otimes \cN \simeq \Phi_{\cS',\cL'}(\cE')$ is a
  sheaf belonging to $\M(\vv')$ and the desired formula holds.
}
\end{proof}
\begin{conse} \label{the consequence}
  As Lemma \ref{d} shows, while dealing with $\cL =
  \cO_X(dH)$, modulo conjugation, we can assume that $d\in\left\{0,1\right\}$.
  So without loss of generality, when
  $\Pic(X) \simeq \ZZ\cdot H$, we assume that $d\in\left\{0,1\right\}$
  for the \correct{rest} of 
  this paper.
  Note that, in view of Lemma \ref{involutive functor}, we deduce:
  \begin{itemize}
  \item either $\cS \simeq \cO_X$, $\s=(1,0,1)$ and $d=0$,
  \item or $\cS$ is a spherical stable bundle of rank $2$, the genus
    $g$ is even, $\s=(2,H,\frac g 2)$ and $d=1$. 
  \end{itemize}
\end{conse}

\subsection{Strategy to obtain an involution on \correct{$\M(\vv)$}} 

From now, we assume that $\cE$ and $\cS$ are torsion-free sheaves. 
We set $\vv(\cE)=(v_0,v_1H,v_2)$ and $\s=(s_0,s_1H,s_2)=v(\cS)$ the Mukai \correct{vectors} of $\cE$ and $\cS$ respectively.
\begin{obj}\label{mainobj}
  The objective of the paper is to determine when $\Phi$ defines an automorphism \correct{of} the moduli space $\M(\vv)$. Therefore, there are two main properties that $\Phi(\cE)$ has to \correct{satisfy}:
  \begin{enumerate}[label=(\roman*)]
  \item \label{oi}
    $\vv(\Phi(\cE))=\vv$.
  \item \label{oii}
    $\Phi(\cE)$ is torsion-free and semi-stable.
  \end{enumerate}
\end{obj}

\subsubsection{Determination of Mukai vectors}

In this subsection, we work under the assumption that $\Pic(X)=\ZZ\cdot H $, where $H$ is
an ample divisor with $H^2=2g-2$, even though later on we will
\correct{allow} a slightly more general Picard lattice.
The reason \correct{for} looking at $\Pic(X) \simeq \ZZ$ is that we would like to
study involutions that do exist for a moduli space $\M(\vv)$ over $X$,
with $X$ general.
By the description of the action on cohomology of the functors
defining $\Phi$ given in \S \ref{equivalences}, we get:

\begin{lemma}\label{v} The action of $\Phi$ on Mukai vectors
  satisfies:
  \begin{equation}
    \vv(\Phi(\cE))=-\left(D\circ
      R_{\cS}(\vv(\cE))\right)\cdot \ch(\cO_X(dH)).
    \label{mukaivectorcondition}
  \end{equation}
\end{lemma}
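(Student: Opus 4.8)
The plan is to read off the cohomological action of $\Phi$ by decomposing it into the elementary (anti-)equivalences of Section \ref{equivalences} and chaining their induced maps on $\HH^*(X,\ZZ)$. Since $\cS$ is locally free, Definition \ref{maindefi} unwinds to
\[
  \Phi(\cE)=\RcHom(\bT_\cS(\cE),\cO_X(dH)[1])\simeq\big(\bT_\cS(\cE)^\vee\otimes\cO_X(dH)\big)[1],
\]
exhibiting $\Phi$ as the composite of the spherical twist $\bT_\cS$, duality $\RcHom(-,\cO_X)$, the twist by $\cO_X(dH)$ and the shift $[1]$, applied in that order. The corresponding maps on Mukai vectors were recorded in Section \ref{equivalences}: the reflection $R_\cS$ (up to a sign to be fixed below) for $\bT_\cS$, the linear involution $D$ for duality, the multiplication $\,\cdot\,\ch(\cO_X(dH))$ for the line-bundle twist, and multiplication by $-1$ for the shift. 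As $D$ and $\,\cdot\,\ch(\cO_X(dH))$ are $\ZZ$-linear, chaining these four maps produces $-(D\circ R_\cS)(\vv(\cE))\cdot\ch(\cO_X(dH))$, the single global sign being contributed by the shift.

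To make the sign bookkeeping unambiguous I would instead argue by additivity of $\vv$ on the distinguished triangle produced in the proof of Lemma \ref{mainlemma}, namely
\[
  \RcHom(\cE,\cL)\to\RHom(\cS,\cE)^\vee\otimes\cS^\vee\otimes\cL\to\Phi(\cE)\xrightarrow{\ +1\ }, \qquad \cL=\cO_X(dH),
\]
which holds in $\Db(X)$ for every object $\cE$ and does not require the sheaf hypotheses of that lemma. Additivity yields $\vv(\Phi(\cE))=\vv(\RHom(\cS,\cE)^\vee\otimes\cS^\vee\otimes\cL)-\vv(\cE^\vee\otimes\cL)$. Here $\vv(\cE^\vee\otimes\cL)=D(\vv(\cE))\cdot\ch(\cL)$; moreover $\RHom(\cS,\cE)$ is a graded vector space of Euler characteristic $\chi(\cS,\cE)=-\vv(\cS)\cdot\vv(\cE)$, equal to that of its dual, so $\vv(\RHom(\cS,\cE)^\vee\otimes\cS^\vee\otimes\cL)=\chi(\cS,\cE)\,D(\vv(\cS))\cdot\ch(\cL)$. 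Factoring out the linear maps $D$ and $\,\cdot\,\ch(\cL)$ reduces the statement to the identity $\chi(\cS,\cE)\,\vv(\cS)-\vv(\cE)=-R_\cS(\vv(\cE))$, which is immediate from $\chi(\cS,\cE)=-\vv(\cS)\cdot\vv(\cE)$ and the definition $R_\cS(x)=x+(x\cdot\vv(\cS))\vv(\cS)$.

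Apart from this, the computation is routine, so the only genuine obstacle is fixing the signs and normalizations correctly: one must pin down the sign of the spherical-twist action --- equivalently the sign in Mukai's formula $\chi(\cS,\cE)=-\vv(\cS)\cdot\vv(\cE)$ --- and keep track of the single sign introduced by the shift $[1]$. The two routes above serve as a mutual cross-check, and I would confirm the outcome on the test object $\cE=\cS$, where $\bT_\cS(\cS)\simeq\cS[-1]$ makes the reflection $R_\cS(\vv(\cS))=-\vv(\cS)$ directly visible and settles the normalization of $R_\cS$.
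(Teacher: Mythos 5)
Your proposal is correct. Your first route---decomposing $\Phi$ into the spherical twist, duality, tensoring by $\cO_X(dH)$ and the shift $[1]$, and chaining the induced maps on $\HH^*(X,\ZZ)$---is precisely the paper's argument: the lemma is stated with no proof beyond the sentence asserting that it follows from the description in \S\ref{equivalences}. Your second route, via additivity of $\vv$ on the triangle $\RcHom(\cE,\cL)\to\RHom(\cS,\cE)^\vee\otimes\cS^\vee\otimes\cL\to\Phi(\cE)$, is a genuinely independent verification, and it is the more valuable of the two because it settles a sign that the paper itself gets wrong: \S\ref{equivalences} records $\vv(\bT_\cS(\cE))=-R_\cS(\vv(\cE))$, whereas the cone definition \eqref{TS} gives $\vv(\bT_\cS(\cE))=\vv(\cE)-\chi(\cS,\cE)\vv(\cS)=\vv(\cE)+(\s\cdot\vv)\s=R_\cS(\vv(\cE))$ with no minus sign, exactly as your test case $\bT_\cS(\cS)\simeq\cS[-1]$ (so $\vv(\bT_\cS(\cS))=-\s=R_\cS(\s)$) confirms. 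Chaining the printed formula literally would yield $+\left(D\circ R_\cS(\vv(\cE))\right)\cdot\ch(\cO_X(dH))$, the opposite of \eqref{mukaivectorcondition}; your normalization is the one under which the lemma, and its downstream use in Lemma \ref{rk1v}, come out as stated. Your triangle computation is also complete as written: $\vv(\Phi(\cE))=\chi(\cS,\cE)\,D(\s)\cdot\ch(\cL)-D(\vv)\cdot\ch(\cL)=D\bigl(\chi(\cS,\cE)\s-\vv\bigr)\cdot\ch(\cL)=-D(R_\cS(\vv))\cdot\ch(\cL)$, using $\chi(\cS,\cE)=-\s\cdot\vv$ and the linearity of $D$ and of $\cdot\,\ch(\cL)$, and it correctly avoids the sheaf-theoretic hypotheses of Lemma \ref{mainlemma}, which are indeed irrelevant at the level of Mukai vectors.
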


\correct{From Lemma \ref{v}}, we can determine the different
possibilities \correct{for} $\cS$ with Mukai vector $\s$ to verify condition \ref{oi} of
Objective \ref{mainobj} when $d=0$ and when $d=1$. 

\begin{lemma}\label{rk1v}
  Let $\vv$ be a Mukai vector with $
    \s\cdot\vv\neq0$.
  Then $\vv(\Phi(\cE))=\vv$ if and only if:
  \begin{enumerate}[label=\roman*)]
  \item \label{s0=1} $(s_0,d)=(1,0)$, $\cS = \cO_X$ and $v_0=v_2$, or:
  \item \label{s0=2} $(s_0,d)=(2,1)$, $\cS$ is spherical with
    $\s=(2,H,\frac{g}{2})$ and $2v_2=(2g-2)v_1-v_0(\frac{g}{2}-1)$.
  \end{enumerate}
\end{lemma}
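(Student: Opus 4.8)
The plan is to turn the cohomological identity of Lemma \ref{v} into a system of three scalar equations and to solve it in each of the two admissible configurations of the pair $(\cS,d)$.

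Write $\vv=(v_0,v_1H,v_2)$ and $\s=(s_0,s_1H,s_2)$, and set $a=\s\cdot\vv$; in coordinates the Mukai pairing reads $a=(2g-2)s_1v_1-s_0v_2-s_2v_0$. By definition $R_{\cS}(\vv)=\vv+a\s=(v_0+as_0,(v_1+as_1)H,v_2+as_2)$. Applying $D$, which flips the sign of the middle component, then the map $\cdot\,\ch(\cO_X(dH))$ of \S\ref{equivalences}, and finally negating (the shift), Lemma \ref{v} produces an explicit triple for $\vv(\Phi(\cE))$. Equating it with $\vv$ component by component, and simplifying the last equation by means of the first two, gives the system
\begin{align*}
as_0&=-2v_0,\\
as_1&=-dv_0,\\
as_2&=-2v_2-d^2(g-1)v_0+d(2g-2)v_1.
\end{align*}

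Next I would invoke Consequence \ref{the consequence} (itself a consequence of Lemma \ref{involutive functor}): under the standing hypotheses there are only two possibilities, namely $(s_0,d)=(1,0)$ with $\cS=\cO_X$ and $\s=(1,0,1)$, or $(s_0,d)=(2,1)$ with $\s=(2,H,\tfrac{g}{2})$ and $g$ even. In the first case $s_1=0$ and $s_0=s_2=1$, so the first and third equations read $a=-2v_0$ and $a=-2v_2$, whence $v_0=v_2$; conversely, if $v_0=v_2$ then $a=-(v_0+v_2)=-2v_0$ and a direct substitution confirms $\vv(\Phi(\cE))=\vv$. In the second case $(s_0,s_1,s_2)=(2,1,\tfrac{g}{2})$, so the first two equations both give $a=-v_0$, and feeding $a=-v_0$, $s_2=\tfrac{g}{2}$ and $d=1$ into the third equation gives $-\tfrac{g}{2}v_0=-2v_2-(g-1)v_0+(2g-2)v_1$, that is $2v_2=(2g-2)v_1-(\tfrac{g}{2}-1)v_0$; the converse is again obtained by substitution, since this relation forces $a=-v_0$.

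No conceptual difficulty arises: the statement reduces to a finite linear computation, and the only thing that needs care is the bookkeeping of signs (those introduced by $D$ and by the shift) together with the recurring factor $H^2=2g-2$, which enters both the Mukai pairing and the action of $\cdot\,\ch(\cO_X(dH))$. The hypothesis $\s\cdot\vv\neq0$, i.e.\ $a\neq0$, serves to discard the spurious branch $v_0=0$ of the system (which does not correspond to a torsion-free sheaf $\cE$); indeed in cases \ref{s0=1} and \ref{s0=2} the relations $a=-2v_0$ and $a=-v_0$ show that $\s\cdot\vv\neq0$ is equivalent to $v_0\neq0$.
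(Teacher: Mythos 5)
Your proposal is correct and follows essentially the same route as the paper: both reduce the claim to the cohomological formula of Lemma \ref{v}, restrict to the two admissible pairs $(\cS,d)$ via Consequence \ref{the consequence}, and solve the resulting componentwise equations (your third equation in the $v_2$-component is just the paper's Euler-characteristic check in different clothing). The explicit three-equation system and the closing remark on $\s\cdot\vv\neq 0$ being equivalent to $v_0\neq 0$ are accurate.
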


\begin{proof}
  In case \ref{s0=1}, we know that the spherical bundle is $\cO_X$ and
  that $d=0$,
  see Consequence \ref{the consequence}. So $\s=(1,0,1)$.
  Moreover, looking at the rank of the involved sheaves, Lemma \ref{v} gives
  $v_0=-\s\cdot\vv-v_0$, hence
  $v_0=v_2$.
  To finish the proof, it only remains to check that
  $\chi(\Phi(\cE))=\chi(\cE)$ when $\s=(1,0,1)$ and $v_0=v_2$.
  However this follows from the equation on the Euler characteristics obtained from (\ref{mukaivectorcondition}).

    Let us look at case \ref{s0=2}. Set $a\coloneqq -\s\cdot\vv$.
  Lemma \ref{v} provides equations on the ranks and the first Chern
  classes, to the effect that 
  $2v_0=a s_0$ and $v_1-v_0=-as_1+v_1.$
  We get 
  \begin{equation}
    v_0=as_1.
    \label{aaa}
  \end{equation}
  Recall from Consequence \ref{the consequence} that 
  $\s=(2,H,\frac g2)$, so that \eqref{aaa} yields $v_0=a$. We obtain
  \begin{equation}
    2v_2=(2g-2)v_1-v_0\left(\frac{g}{2}-1\right).
    \label{v0v2}
  \end{equation}
  It remains to verify that we have 
  $\chi(\Phi(\cE))=\chi(\cE)$.
  Lemma \ref{v} provides the following equation on the Euler characteristics:
  $$\chi((\Phi(\cE)\otimes\cO_X(-H))=v_0\left(\frac{g}{2}+2\right)-v_2-v_0.$$
  However:
  $$\chi(\cE(-H))=v_2+v_0-(g-1)(2v_1-v_0).$$
  So, we need to verify that:
  $$v_2+v_0-(g-1)(2v_1-v_0)=v_0\left(\frac{g}{2}+2\right)-v_2-v_0.$$
  This is equivalent to relation \eqref{v0v2}.
\end{proof}

\subsubsection{Preservation of stability}

With the Mukai vectors determined in the previous subsection,
we now prove that the functor \correct{$\Phi$} preserves
\correct{semistability of torsionfree sheaves}.
In this subsection, we make a slightly more general assumption on
  the Picard group of $X$; we assume that \correct{$\Pic(X)= \ZZ\cdot H\oplus^{\bot}
  N$}, where $H$ is an ample divisor with $H^2=2g-2$ and $N$ is a
  lattice that does not contain any effective divisor. 

\begin{prop}\label{torsionfree} 
  We assume that $\Ext^1(\cS,\cE)=\Ext^2(\cS,\cE)=0$. \correct{As before, we denote by $\be_{\cS,\cE}$ the evaluation map (see (\ref{TS})).}
  \begin{enumerate}[label=(\roman*)]
  \item \label{pi}
    If $\dim(\coker(\be_{\cS,\cE})) \le 0$, then
    $\Phi(\cE)$ is a torsion-free sheaf.
  \item \label{pii}
    If in addition $\Ker(\be_{\cS,\cE})$ is slope-stable, then $\Phi(\cE)$ is slope-stable.
  \end{enumerate}
\end{prop}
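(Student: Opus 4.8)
The plan is to realize $\Phi(\cE)$ as a subsheaf of a locally free sheaf with zero-dimensional cokernel, and then to transport stability through this identification. Write $\be=\be_{\cS,\cE}$, and set $K=\Ker(\be)$ and $C=\coker(\be)$. Under the vanishing hypothesis $\Ext^1(\cS,\cE)=\Ext^2(\cS,\cE)=0$ we have $\RHom(\cS,\cE)=\Hom(\cS,\cE)$, so $\be\colon \Hom(\cS,\cE)\otimes\cS\to\cE$ is an honest map of sheaves and $\bT_\cS(\cE)$, being its cone, is a two-term complex with $\cH^{-1}(\bT_\cS(\cE))=K$ and $\cH^0(\bT_\cS(\cE))=C$. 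Since $K$ is a subsheaf of the locally free sheaf $\Hom(\cS,\cE)\otimes\cS$, it is torsion-free; hence $K^\vee$ is reflexive, thus locally free on the smooth surface $X$, and $\cExt^i(K,\cL)=0$ for $i\ge 2$. The assumption $\dim\coker(\be)\le 0$ means that $C$ has finite length, so $\RcHom(C,\cL)\simeq\cExt^2(C,\cL)[-2]$ with $\cExt^2(C,\cL)$ of dimension zero.

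First I would apply the contravariant functor $\RcHom(-,\cL[1])$ to the truncation triangle $K[1]\to\bT_\cS(\cE)\to C\to K[2]$. Using the two computations above together with the fact, established in Lemma \ref{mainlemma}, that $\Phi(\cE)$ is concentrated in degree $0$, the associated long exact sequence of cohomology sheaves collapses to the short exact sequence
\begin{equation*}
  0\to \Phi(\cE)\to K^\vee\otimes\cL\to \cExt^2(C,\cL)\to 0.
\end{equation*}
(Equivalently, one can dualize the four-term sequence $0\to K\to\Hom(\cS,\cE)\otimes\cS\xrightarrow{\be}\cE\to C\to 0$ with $\cHom(-,\cL)$ and compare the outcome with the sequence \eqref{eq: the map phi} of Lemma \ref{mainlemma}.) Since $K^\vee\otimes\cL$ is locally free and $\Phi(\cE)$ embeds into it, $\Phi(\cE)$ is torsion-free, which proves \ref{pi}.

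For \ref{pii} I would argue that slope-stability propagates along this identification. If $K$ is slope-stable, then its dual $K^\vee$ is slope-stable (duality preserves $\mu$-stability of torsion-free sheaves on a smooth projective surface), and tensoring by the line bundle $\cL$ preserves slope-stability, so $K^\vee\otimes\cL$ is slope-stable. Finally, $\Phi(\cE)$ is a subsheaf of $K^\vee\otimes\cL$ whose quotient $\cExt^2(C,\cL)$ is zero-dimensional; in particular $\Phi(\cE)$ and $K^\vee\otimes\cL$ share the same rank and first Chern class, hence the same slope, and any proper saturated subsheaf $F'\subset\Phi(\cE)$ saturates to a proper subsheaf $G'\subset K^\vee\otimes\cL$ of the same rank and of slope at least $\mu_H(F')$. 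Slope-stability of $K^\vee\otimes\cL$ then forces $\mu_H(F')\le\mu_H(G')<\mu_H(\Phi(\cE))$, so $\Phi(\cE)$ is slope-stable.

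The main obstacle is the first step: correctly computing the derived dual of the two-term complex $\bT_\cS(\cE)$ and pinning down the relevant $\cExt$-sheaves, where the hypotheses $\dim\coker(\be)\le0$ (to force $C$ to have finite length, so that $\RcHom(C,\cL)$ lives only in degree $2$ and is zero-dimensional) and the torsion-freeness of $K$ (to guarantee that $K^\vee$ is locally free and that $\cExt^{\ge 2}(K,\cL)=0$) are both essential. The stability arguments in \ref{pii} then reduce to standard facts about $\mu$-stability under dualization, twisting by a line bundle, and modification by a zero-dimensional sheaf.
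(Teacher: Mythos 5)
Your proof is correct, and for part \ref{pi} it takes a genuinely different, arguably more economical route than the paper. The paper dualizes the defining triangle $\RHom(\cS,\cE)\otimes\cS\to\cE\to\bT_\cS(\cE)$ itself, obtaining the four-term sequence \eqref{eq: the map phi}; it then introduces the image $\cG$ of the middle map, which sits inside both $\Phi(\cE)$ and $\cK^\vee(dH)$ with zero-dimensional quotients, rules out $1$-dimensional torsion of $\Phi(\cE)$ by a first-Chern-class computation (comparing $c_1(\Phi(\cE)^\vee)$ with $-c_1(\Phi(\cE))$), and rules out $0$-dimensional torsion by observing that $\Phi(\cE)$ is simple because $\Phi$ is an anti-equivalence. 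You instead dualize the truncation triangle $\cK[1]\to\bT_\cS(\cE)\to\coker(\be_{\cS,\cE})$ and land directly on the short exact sequence $0\to\Phi(\cE)\to\cK^\vee\otimes\cL\to\cExt^2(\coker(\be_{\cS,\cE}),\cL)\to 0$, so torsion-freeness is immediate from the embedding into a locally free sheaf and no simplicity argument is needed; your sequence is consistent with the paper's, since $\cExt^1(\cE,\cL)$, $\cExt^1(\cE_0,\cL)$ and $\cExt^2(\coker(\be_{\cS,\cE}),\cL)$ fit into the expected short exact sequence coming from $0\to\cE_0\to\cE\to\coker(\be_{\cS,\cE})\to 0$. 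For part \ref{pii} the two arguments are essentially the same, namely transport of $\mu$-stability from $\cK$ to $\cK^\vee\otimes\cL$ and then across a zero-dimensional modification, except that the paper passes through the intermediate sheaf $\cG$ while you compare $\Phi(\cE)$ and $\cK^\vee\otimes\cL$ directly. One small point worth making explicit: $\cK$ is not merely torsion-free but reflexive (being the kernel of a map from a locally free sheaf to a torsion-free one), hence locally free on the surface, which is what guarantees that $\RcHom(\cK,\cL)$ is concentrated in degree $0$; your write-up only records $\cExt^{\ge 2}(\cK,\cL)=0$. This does not affect the conclusion (your long exact sequence forces $\cExt^1(\cK,\cL)=0$ in any case), but it is cleaner to note it, as the paper does.
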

\begin{proof}
  Set $\cK\coloneqq \Ker(\be_{\cS,\cE})$ and $\cE_0\coloneqq \im(\be_{\cS,\cE})$.
  We obtain an exact sequence:
  \begin{equation} 			\label{kerIm}
    0 \to \cK \to \Hom(\cS,\cE)\otimes\cS \to
    \cE_0\to 0.
  \end{equation}
  Since $\coker(\be_{\cS,\cE})$ has dimension at most 0, $c_1(\cE_0)=c_1(\cE)$ and $\cE^\vee=\cE_0^\vee$. 
  We can deduce the first Chern class of $\cK$:
  \begin{equation}
    c_1(\cK)=-\left(v(\cS)\cdot v\right) c_1(\cS)-c_1(\cE).
    \label{c1K}
  \end{equation}
  Note also that $\cK$ is a reflexive sheaf, hence a vector bundle.
  
  By Lemma \ref{mainlemma}, $\Phi(\cE)$ is a sheaf and \correct{satisfies} (\ref{eq: the map phi}).
  Consider the
  image 
  $\cG$ of the middle map in the exact sequence.
  We have:
  \begin{equation}
    0 \to \cG \to \Phi(\cE) \to \cExt^1(\cE,\cO_X(dH)) \to 0.
    \label{middle0}
  \end{equation}
  Note that $\cExt^1(\cE,\cO_X(dH))$ is supported on isolated points since $\cE$ is torsion free.
  Taking the dual, tensored by $\cO_X(dH)$, of
  \eqref{kerIm} we see that $\cG$ is also the image of the middle map of the
  following sequence:
  \begin{equation}
    0 \to \cE_0^\vee(dH) \to
    \Hom(\cS,\cE)^\vee\otimes\cS^\vee(dH)
    \to \cK^\vee (dH) \to
    \cExt^1(\cE_0,\cO_X(dH))\to 0.
    \label{middle3}
  \end{equation}
  In particular, we have $\cG^{\vee}\simeq
  \Phi(\cE)^{\vee}$ and $\cG^{\vee} \simeq \cK(-dH)$.
  This shows that $c_1(\cG^{\vee})=c_1(\Phi(\cE)^{\vee})$ and $c_1(\cG^{\vee})=c_1(\cK(-dH))$.
  Combined with (\ref{c1K}) and (\ref{mukaivectorcondition}), we obtain that:
  $$c_1(\Phi(\cE)^{\vee})=-c_1(\Phi(\cE)).$$
  This proves that $\Phi(\cE)$ cannot have 1-dimensional torsion.
  
  Moreover, we know that $\Phi$ is an anti-auto-equivalence of $\Db(X)$, which
  implies
  \[
    \Ext^i(\Phi(\cE),\Phi(\cE)) \simeq
    \Ext^i(\cE,\cE), \qquad \forall i \in \ZZ.
  \]
  Therefore, since $\cE$ is stable and hence simple, $\Phi(\cE)$
  is also simple.
  This implies that $\Phi(\cE)$ has no zero-dimensional torsion. To see
  this, note that if
  $p \in Z$ was a point in the support of a zero-dimensional torsion
  subsheaf of $\Phi(\cE)$, since $\Phi(\cE)$ has positive rank, one could find
  an epimorphism of $\Phi(\cE)$ onto the skyscraper sheaf $\cO_p$, and 
  since $\cO_p$ maps injectively into $\Phi(\cE)$, we would get an
  endomorphism of $\Phi(\cE)$ factoring through $\cO_p$,
  which is absurd. So \ref{pi} is proved.
  
  We are ready to prove \ref{pii}. Since $\cK$ is a
  vector bundle. The slope-stability of $\cK$ implies
  the slope-stability of $\cK^\vee(dH)$. However by
  (\ref{middle3}), $\cG$ differs from $\cK^\vee(dH)$
  only along a zero-dimensional subset of $X$. So $\cG$
  is slope-stable. Similarly, by (\ref{middle0}) $\cG$
  and $\Phi(\cE)$ differs along a zero-dimensional
  subset of $X$. We obtain that $\Phi(\cE)$ is slope
  stable. 
\end{proof}

When dealing with twisting about $\cO_X$, we are able to prove
  Objective \ref{mainobj}, \ref{oii} when $v_1=1$,
so the next step would be to
establish the following lemma. We omit its proof since the ideas
  are similar to the ones used by Markman
  (see \cite[P. 682--684]{MarkmanMain} and \cite[Lemma 4.10]{OGrady})
  and the proof is also similar to the one of Proposition
  \ref{stability2} \correct{below}.

\begin{lemma}\label{stability1}
  We assume that:
  \begin{itemize}
  \item[(i)]
    $\Ext^1(\mathcal{O}_X,\cE)=\Ext^2(\mathcal{O}_X,\cE)=0$ and  $\Hom(\mathcal{O}_X,\cE)\neq0$; 
  \item[(ii)]
    $\vv(\Phi(\cE))=\vv(\cE)$ and $c_1(\cE)=H$;
  \item[(iii)]
    \correct{$\Pic(X)=\Z \cdot H \oplus^{\bot} N$} with $N$ not containing any effective divisor.
  \end{itemize}
  Then $\coker(\be_{\mathcal{O}_X,\cE})$ is supported on isolated points and $\Ker(\be_{\mathcal{O}_X,\cE})$ is slope-stable.
\end{lemma}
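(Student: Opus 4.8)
The plan is to treat the two assertions separately, writing $V \coloneqq \Hom(\cO_X,\cE)$, $\cE_0 \coloneqq \im(\be_{\cO_X,\cE}) \subseteq \cE$ and $\cK \coloneqq \Ker(\be_{\cO_X,\cE})$, so that $\be_{\cO_X,\cE}$ is the evaluation $V\otimes\cO_X\to\cE$ and $\cE_0$ is globally generated. By hypothesis (i) we have $\dim V = \chi(\cE)=v_2+v_0$, and since here $\cS=\cO_X$, $d=0$, hypothesis (ii) together with Lemma \ref{rk1v}\ref{s0=1} gives $v_0=v_2$, so $\dim V = 2v_0$. Note moreover that $\cE$ is $\mu$-stable: as $\deg_H$ is integer-valued while $\mu_H(\cE)=1/v_0$, the sheaf $\cE$ admits no proper subsheaf of the same slope, so Gieseker stability forces slope-stability. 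Since every global section of $\cE$ factors through $\cE_0$, we have $\Hom(\cO_X,\cE_0)=V$. The elementary tool used repeatedly will be the following fact, which I would establish by induction on the rank using that a general section of a globally generated sheaf vanishes in codimension $\ge 2$ and that $H^1(X,\cO_X)=0$ splits the resulting extensions: a globally generated torsion-free sheaf $\cF$ on $X$ with $c_1(\cF)=0$ is isomorphic to $\cO_X^{\oplus \rk\cF}$, so $\dim\Hom(\cO_X,\cF)=\rk\cF$.

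First I would show $\cQ\coloneqq\coker(\be_{\cO_X,\cE})$ is supported on points. As $\cE_0$ is globally generated, $c_1(\cE_0)$ is nef and effective (or zero). Suppose $\cQ$ is not $0$-dimensional. If $\rk\cE_0=v_0'<v_0$, then $\mu$-stability of $\cE$ gives $\deg_H(\cE_0)<v_0'/v_0<1$, hence $\deg_H(\cE_0)\le 0$; with nefness this forces $\deg_H(\cE_0)=0$, so $c_1(\cE_0)\in N$ is effective and hence vanishes by hypothesis (iii). If instead $\rk\cE_0=v_0$ but $\cQ$ has $1$-dimensional support $D>0$, then $c_1(\cE_0)=H-D$; writing $D=\delta H+\nu_D$ with $\delta\ge 1$ (since $D$ is effective and $N$ carries no effective class), nefness of $c_1(\cE_0)$ gives $(1-\delta)(2g-2)\ge 0$, so $\delta=1$ and $c_1(\cE_0)=-\nu_D\in N$ is effective, hence zero. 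In either case $\cE_0\cong\cO_X^{\oplus\rk\cE_0}$, so $\dim V=\dim\Hom(\cO_X,\cE_0)=\rk\cE_0\le v_0<2v_0$, contradicting $\dim V=2v_0$. Thus $\cQ$ is $0$-dimensional, $\rk\cE_0=v_0$, and $c_1(\cK)=-H$.

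The harder point, which I expect to be the genuine obstacle, is the slope-stability of $\cK$, of rank $v_0$ and slope $\mu_H(\cK)=-1/v_0$. The crucial observation is that $\Hom(\cO_X,\cK)=0$: on global sections $\be_{\cO_X,\cE}$ induces the identity of $V$, so its kernel vanishes. Assume $\cK$ is not slope-stable and choose a saturated $\cF\subseteq\cK$ with $0<\rk\cF<v_0$ and $\mu_H(\cF)\ge\mu_H(\cK)=-1/v_0$. Since $\cK$ is saturated in $V\otimes\cO_X$, so is $\cF$; being a subsheaf of a trivial bundle it has $\deg_H(\cF)\le 0$, and as $\deg_H\in\Z$ with $\deg_H(\cF)\ge -\rk\cF/v_0>-1$, we get $\deg_H(\cF)=0$. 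Set $\cG\coloneqq(V\otimes\cO_X)/\cF$: it is a quotient of the slope-$0$ polystable sheaf $V\otimes\cO_X$, hence slope-$0$ and $\mu$-semistable, and from $\Hom(\cO_X,\cF)\subseteq\Hom(\cO_X,\cK)=0$ we obtain $\dim\Hom(\cO_X,\cG)\ge\dim V=2v_0$. On the other hand, the image $\cG_0$ of the evaluation $\Hom(\cO_X,\cG)\otimes\cO_X\to\cG$ is globally generated with $\deg_H(\cG_0)\le 0$ (subsheaf of a slope-$0$ semistable sheaf) and $\deg_H(\cG_0)\ge 0$ (nefness), so $c_1(\cG_0)\in N$ is effective and thus zero, giving $\cG_0\cong\cO_X^{\oplus\rk\cG_0}$. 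Then $\dim\Hom(\cO_X,\cG)=\dim\Hom(\cO_X,\cG_0)=\rk\cG_0\le\rk\cG=2v_0-\rk\cF<2v_0$, a contradiction. Hence no such $\cF$ exists and $\cK$ is slope-stable.

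I expect the routine ingredients (integrality of $\deg_H$, the elementary trivialization lemma above, and the standard facts that globally generated sheaves have nef and effective determinant and that same-slope quotients of polystable sheaves are semistable) to be unproblematic. The single delicate step is the stability of the kernel, and what makes it work is the combination of the vanishing $\Hom(\cO_X,\cK)=0$ with the rigidity forced by hypothesis (iii): any globally generated subsheaf with $c_1\cdot H=0$ must be trivial, hence carries too few sections to be compatible with the $2v_0$ sections produced by $\cF$ having no sections.
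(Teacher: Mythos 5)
Your proof is correct, and since the paper omits the proof of this lemma (referring instead to Markman, O'Grady and to the proof of Proposition \ref{stability2}), the right benchmark is that proposition. Your argument follows the same skeleton: integrality of $\deg_H$ on $\ZZ\cdot H\oplus^{\bot}N$ pins the slope of $\im(\be_{\cO_X,\cE})$ and of any destabilizer of $\cK=\Ker(\be_{\cO_X,\cE})$ to a boundary value, and the vanishing $\Hom(\cO_X,\cK)=0$ (the evaluation inducing the identity on global sections, exactly as in the paper's $\Hom(\cS,\cK)=0$ step) kills the boundary case. Where you genuinely diverge is in how the boundary case is excluded: the paper's template identifies the offending subsheaf with a direct sum of copies of $\cS$ via stability of $\cS$, whereas you pass to the quotient $\cG=(V\otimes\cO_X)/\cF$ and run a section count, $h^0(\cG)\ge 2v_0$ against $h^0(\cG)=\rk\cG_0<2v_0$, via your auxiliary trivialization lemma (a globally generated torsion-free sheaf with $c_1=0$ is trivial). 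That lemma is true and does the work of "stability of $\cO_X$" in the rank-one setting; it is perhaps most cleanly proved not by the induction you sketch (the extensions $0\to\cI_Z\to\cF\to\cO_X^{r-1}\to 0$ need not split when $Z\neq\emptyset$) but by choosing sections $s_1,\dots,s_r$ whose wedge is a nonzero constant in $\HH^0(\det\cF)=\C$, so that $\cO_X^{\oplus r}\to\cF$ is an isomorphism in codimension one and hence, after double-dualizing, an isomorphism. Modulo that small repair in the auxiliary lemma, all steps check out, including the reduction $\dim V=2v_0$ via Lemma \ref{rk1v} and the $\mu$-stability of $\cE$ from integrality.
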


When dealing with involutions defined by twisting about a spherical
bundle $\cS$ of rank $2$, our choice for the Mukai vector is made so
that the $H$-slope of $\cE$ is just a bit bigger than $1/2$.
Then, in order to guarantee preservation of stability, we need the
following proposition.

\begin{prop}\label{stability2}
  Let $\cS$ be a spherical bundle of Mukai vector $(2,H,\frac{g}{2})$ with $g$ even.
  Assume:
  \begin{enumerate}[label=(\roman*)]
  \item
    $\Ext^1(\cS,\cE)=\Ext^2(\cS,\cE)=0$ and  $\Hom(\cS,\cE)\neq0$; 
  \item
    $\rk(\cE)=2k+1$ with $k\in\N$;
  \item
    $\vv(\Phi(\cE))=\vv(\cE)$ and $c_1(\cE)=(k+1)H$;
  \item
    \correct{$\Pic(X)=\Z \cdot H \oplus^{\bot} N$} with $N$ not containing any effective divisor.
  \end{enumerate}
  Then $\coker(\be_{\cS,\cE})$ is supported on isolated points and $\Ker(\be_{\cS,\cE})$ is slope-stable.
\end{prop}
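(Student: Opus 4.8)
The plan is to run two parallel slope arguments, the decisive feature being that the Mukai vector is chosen so that $\mu_H(\cE)=\frac{k+1}{2k+1}$ sits \emph{just above} $\mu_H(\cS)=\frac12$, and that $\deg_H$ takes integral values on $\Pic(X)=\Z H\oplus^{\bot}N$ (since $N\cdot H=0$), with effective classes of $H$-degree $0$ necessarily vanishing (as $N$ carries no effective divisor). First I would record the standing facts. Both $\cS$ and $\cE$ are slope-stable: $\cS$ by hypothesis, and $\cE$ because $\gcd(\rk\cE,\deg_H\cE)=\gcd(2k+1,k+1)=1$ makes its slope primitive, so Gieseker-stability upgrades to $\mu$-stability. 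The vanishing in (i) gives $\hom(\cS,\cE)=\chi(\cS,\cE)=-\s\cdot\vv=2k+1=\rk(\cE)$, in agreement with Lemma \ref{rk1v}. Writing $V=\Hom(\cS,\cE)$, $\cE_0=\im(\be_{\cS,\cE})$ and $\cK=\Ker(\be_{\cS,\cE})$, every $\phi\in V$ factors through $\cE_0$, so $\Hom(\cS,\cE_0)\cong\Hom(\cS,\cE)$; applying $\Hom(\cS,-)$ to $0\to\cK\to V\otimes\cS\to\cE_0\to0$ and using $\End(\cS)=\C$ (sphericity) yields $\Hom(\cS,\cK)=0$. I will use these repeatedly.

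Next I would handle the cokernel. Since $V\otimes\cS\cong\cS^{\oplus(2k+1)}$ is slope-semistable of slope $\frac12$, its quotient $\cE_0$ satisfies $\mu_H(\cE_0)\ge\frac12$, and $\cE_0\subseteq\cE$. Suppose $\be_{\cS,\cE}$ were not generically surjective, i.e.\ $r_0:=\rk(\cE_0)\le 2k$. The constraint $\frac12\le\mu_H(\cE_0)<\mu_H(\cE)$ together with integrality of $\deg_H$ forces $\mu_H(\cE_0)=\frac12$ (the interval is too short to contain any other value with denominator $\le 2k$). But then $\cE_0$ is slope-semistable of slope $\frac12$, whose stable Jordan--Hölder factors $G$ of slope $\frac12$ satisfy $\Hom(\cS,G)=0$ unless $G\cong\cS$ (a nonzero map from $\cS$ is injective with $0$-dimensional cokernel, forcing $G^{\vee\vee}\cong\cS$ and then $G\cong\cS$), whence $\hom(\cS,\cE_0)\le\frac{r_0}{2}\le k<2k+1=\hom(\cS,\cE_0)$, a contradiction. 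Hence $r_0=2k+1$. Finally $\mu_H(\cE_0)\ge\frac12$ gives $\deg_H(\cE_0)\ge k+1=\deg_H(\cE)$, while $\cE_0\subseteq\cE$ gives the reverse inequality; so $c_1(\coker(\be_{\cS,\cE}))$ is effective of $H$-degree $0$, hence zero, and $\coker(\be_{\cS,\cE})$ is supported on isolated points.

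For the kernel, the previous step yields $c_1(\cK)=(2k+1)H-(k+1)H=kH$ and $\rk(\cK)=2k+1$, so $\mu_H(\cK)=\frac{k}{2k+1}$ with $\gcd(2k+1,k)=1$; thus it suffices to prove $\cK$ slope-semistable, primitivity then upgrading this to slope-stability. A destabilizing subsheaf would have slope in $(\frac{k}{2k+1},\frac12]$ (the upper bound because $\cK\subseteq V\otimes\cS$ is slope-semistable of slope $\frac12$), and integrality again pins this slope to $\frac12$. Taking a stable slope-$\frac12$ subsheaf $\cF'$ and projecting to a summand $\cS$ of $V\otimes\cS$ shows $\cF'^{\vee\vee}\cong\cS$; since $\cK$ is saturated in $V\otimes\cS$ (being the kernel of a map to the torsion-free sheaf $\cE_0$), the saturation of $\cF'$ inside $\cK$ is reflexive, hence locally free on the surface, with double dual $\cS$, so it \emph{is} $\cS$. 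This produces a nonzero map $\cS\to\cK$, contradicting $\Hom(\cS,\cK)=0$. Hence $\cK$ is slope-stable, which is exactly the input needed by Proposition \ref{torsionfree}\ref{pii}.

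I expect the borderline slope-$\frac12$ analysis to be the crux. The numerical pigeonhole (driven by $\mu_H(\cE)$ lying just above $\frac12$ and by the $\Z$-valued degree) always reduces both statements to ruling out a subsheaf or quotient of slope exactly $\mu_H(\cS)$, and the two exclusions require genuinely different inputs: the Hom-count $\hom(\cS,\cE)=\rk(\cE)$ for the cokernel, and the saturation/double-dual identification with $\cS$ (played against $\Hom(\cS,\cK)=0$) for the kernel. The routine parts are the integrality estimates, which I would only sketch.
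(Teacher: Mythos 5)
Your argument is correct and follows essentially the same strategy as the paper's proof: the integrality of $\deg_H$ on $\Z \cdot H\oplus^{\bot}N$ pins the slope of $\im(\be_{\cS,\cE})$, and of any destabilizer of $\cK$, to the borderline value $\tfrac12$, which is then excluded using $\Hom(\cS,\cK)=0$ (obtained, exactly as in the paper, by applying $\Hom(\cS,-)$ to the sequence defining $\cK$). The only real divergence is in ruling out $\mu_H(\cE_0)=\tfrac12$ for the cokernel step --- the paper notes that $\cK$ would then also have slope $\tfrac12$ and hence be a direct sum of copies of $\cS$, whereas you count $\hom(\cS,\cE_0)$ via Jordan--H\"older factors --- and your saturation/double-dual identification of a copy of $\cS$ inside $\cK$ is in fact slightly more careful than the paper's direct assertion that the destabilizer is $\cS^{\oplus\alpha}$.
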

\begin{proof}
  We first prove that $\coker(\be_{\cS,\cE})$ is supported on isolated points.
  We denote by
  $\cE_0$ the image of $\be_{\cS,\cE}$ and by $v_0$ the rank of $\cE$.
  Let $c \in \ZZ$ and $D\in N$ such that $c_1(\cE_0)=c H+D$. By stability of $\cE$ and $\cS$,
  we must have:
  $$\frac{1}{2}\leq \frac{c}{\rk(\cE_0)}\leq \frac{k+1}{2k+1}.$$
  First, we notice that we may not have $\frac{1}{2}< \frac{c}{\rk(\cE_0)}< \frac{k+1}{2k+1}$.
  Indeed, this would imply
  $$\frac{\rk(\cE_0)}{2}< c<\frac{\rk(\cE_0)}{2}+ \frac{\rk(\cE_0)}{4k+2}<\frac{\rk(\cE_0)}{2}+\frac{1}{2}.$$
  But this is impossible for any choice of the integers $c$ and $\rk(\cE_0)$.
  \medskip
  
  Now, we consider the case $\frac{c}{\rk(\cE_0)}=\frac{1}{2}$. We write $\cK=\Ker(\be_{\cS,\cE})$.
  We have $\rk(\cE_0)=2c$ and $\rk(\cK)=2\rk(\cE)-\rk(\cE_0)=2(\rk (\cE)-c)$.
  Moreover $c_1(\cK)=(\rk(\cE)-c)H-D$. Hence the slope of $\cK$ is also $\frac{1}{2}$.
  Therefore by stability of $\cS$, we must have $\cK \simeq \cS^{\oplus (\rk(\cE)-c)}$. However, this is impossible because $\Hom(\cS,\cK)=0$.
  Indeed, we have the exact sequence:
  $$0 \to \cK \to \Hom(\cS,\cE)\otimes\cS \to \cE;$$
  if we take the image by the functor $\Hom(\cS,-)$, since $\cS$ is simple we obtain:
  $$0 \to \Hom(\cS,\cK) \to \Hom(\cS,\cE) \to \Hom(\cS,\cE).$$

  Note that the last map is induced by the evaluation map and therefore
  it is just the identity. We obtain $\Hom(\cS,\cK)=0$.
  \medskip
  
  The only remaining case is  $\frac{c}{\rk(\cE_0)}=\frac{k+1}{2k+1}$. Since $k+1$ and $2k+1$ are coprime and
  $\rk(\cE_0)\leq 2k+1$, we obtain $\rk(\cE_0)=2k+1=\rk(\cE)$ which is
  what we wanted to prove.
  We also get $c_1(\cE_0)=c_1(\cE)$, hence $c_1(\cE/\cE_0)=0$ so that
  $\coker(\be_{\cS,\cE}) \simeq \cE/\cE_0$ is supported on isolated points.  
  
  \medskip
  
  Now, we prove that $\cK$ is slope-stable.
  We assume that there exists a destabilizing subsheaf $\cH\hookrightarrow \cK$ and we will find a contradiction.
  We have $\cH\hookrightarrow \cK\hookrightarrow \Hom(\cS,\cE)\otimes\cS$. The slope of $\cK$ is given by $\frac{\rk(\cE)-c}{\rk(\cE)}=\frac{k}{2k+1}$.
  Hence by stability of $\cS$, we obtain:
  $$\frac{k}{2k+1}\leq \frac{\alpha}{\rk(\cH)}\leq \frac{1}{2},$$
  with $c_1(\cH)=\alpha H+D'$ with $D'\in N$.
  Note that $\rk(\cH)< 2k+1$. 
  As before, we have:
  $$\frac{\rk(\cH)}{2}-\frac{1}{2}<\frac{\rk(\cH)}{2}-\frac{\rk(\cH)}{4k+2}\leq \alpha\leq \frac{\rk(\cH)}{2}.$$

  The only solution is then $\rk(\cH)=2\alpha$. As before this implies
  that $\cH=\cS^{\oplus\alpha}$ and it is impossible because we have
  seen that $\Hom(\cS,\cK)=0$. Therefore $\cK$ is slope-stable, which
  finishes the proof. 
\end{proof}

\section{Existence of the involution}\label{existence}

Here we describe the argument to see that the functor \correct{$\Phi_{\cS,d}$}
indeed induces an involution on the moduli space $\M(\vv)$ over $X$.
We first deal with the case $\cS=\cO_X$, where we recover
results of Markman and O'Grady, then move to the case
$\rk(\cS)=2$, where we describe new involutions.

\subsection{Markman-O'Grady's involutions}\label{MarkmanInv}

In this section, we review Markman's results \cite{MarkmanMain} (see also \cite[Section 4]{OGrady}).
This corresponds to the spherical twists about $\cS=\cO_X$, with
$d=0$, see Consequence \ref{the consequence}.
In this section, for simplicity, we denote $\Phi_{\cO_X,0}=\Phi$.

\correct{Our approach provides a more direct path to check regularity of
Markman-O'Grady's involutions.
Indeed, in \cite{MarkmanMain} Markman constructs birational maps between moduli spaces
$\M(\vv) \dashrightarrow \M(\vv')$
for appropriate pairs of Mukai vectors $\vv$ and $\vv'$.
Involutions arise when $\vv=\vv'$, which in turn forces $\vv=(r,H,r)$.
O' Grady analyzed these involutions again in \cite{OGrady} and showed their
antisymplectic nature.
These are the
involutions described in our paper on the space $\M_X(r,H,r)$ as
induced by $\Phi=\Phi_{\cO_X,0}^1$,
when $X$ is a K3 surface with Picard number one and genus $g$ and $r \in \NN$ satisfies $r^2 \le g < (r+1)^2$.
	}


\correct{Markman studies the base loci of the rational maps $\M(\vv)  \dashrightarrow  \M(\vv')$ in general.
  In our case, we propose a sheaf-theoretic proof that
  Markman-O’Grady’s involution is regular by showing directly that
$\HH^1(\cE)=0$ for all $\cE \in \M(\vv)$,
which then allows us to apply Proposition \ref{torsionfree} and Lemma
\ref{stability1}. This replaces Markman's study of the base loci of
the rational maps $\M(\vv) \dashrightarrow \M(\vv')$ and appears to us as a shortcut with respect to Markman's approach.}

\begin{lemma}  \label{no H1}
  Assume $r^2 \le g < (r+1)^2$. 
  Let $\cE$ be any element of $\M(r,H,r)$. Then $\HH^1(\cE)=0$.
\end{lemma}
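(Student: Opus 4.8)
The plan is to translate the vanishing $\HH^1(\cE)=0$ into the non-existence of a certain stable sheaf whose Mukai vector is forbidden by the numerical hypothesis. First I would record the two elementary cohomological reductions. Since $\vv=(r,H,r)$ is primitive and $\Pic(X)=\ZZ\cdot H$ has a single ample chamber, every $\cE\in\M(r,H,r)$ is in fact slope-stable, with $\mu_H(\cE)=1/r>0$. Serre duality gives $\HH^2(\cE)\simeq\Hom(\cE,\cO_X)^*$, and a nonzero morphism $\cE\to\cO_X$ is impossible, as its image would be a quotient of $\cE$ of slope $\le 0<\mu_H(\cE)$, contradicting stability; hence $\HH^2(\cE)=0$ and $\hh^0(\cE)-\hh^1(\cE)=\chi(\cE)=v_0+v_2=2r$. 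So the statement is equivalent to $\hh^0(\cE)=2r$, and it suffices to exclude $\HH^1(\cE)\neq 0$.

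Suppose $\HH^1(\cE)\neq0$. By Serre duality $\Ext^1(\cE,\cO_X)\simeq \HH^1(\cE)^*\neq 0$, so there is a non-split extension
\[
  0\to\cO_X\to\cF\to\cE\to0.
\]
Then $\cF$ is torsion-free of rank $r+1$ with $\vv(\cF)=\vv(\cE)+\vv(\cO_X)=(r+1,H,r+1)$, so $\vv(\cF)^2=(2g-2)-2(r+1)^2$. Here the hypothesis $g<(r+1)^2$ forces $\vv(\cF)^2\le -4<-2$. Since a simple sheaf $\cF$ on $X$ satisfies $\vv(\cF)^2\geq -2$, it is enough to prove that $\cF$ is simple, and I would deduce this from slope-stability of $\cF$.

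The heart of the argument — and the step I expect to cost the most — is therefore to check that $\cF$ is slope-stable, i.e. $\mu_H(\cG)<\mu_H(\cF)=\tfrac1{r+1}$ for every proper nonzero saturated $\cG\subseteq\cF$. Writing $\pi\colon\cF\to\cE$, $\cA=\cG\cap\cO_X$ and $\cB=\pi(\cG)\subseteq\cE$, I would split into two cases. If $\cA=0$ then $\cG\simeq\cB\subsetneq\cE$ (the case $\cB=\cE$ would split the sequence), and stability of $\cE$ together with $\Pic(X)=\ZZ\cdot H$ shows that no subsheaf of $\cE$ has slope in $\bigl(\tfrac1{r+1},\tfrac1r\bigr)$: with $b=\deg_H(\cB)\in\ZZ$ and $\rho=\rk(\cB)\le r$, the inequality $\tfrac{b}{\rho}<\tfrac1r$ forces $\rho>rb\ge r$ as soon as $b\ge1$, which is absurd, so $\mu_H(\cG)<\tfrac1{r+1}$. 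If instead $\cA\neq0$, saturation of $\cG$ forces $\cA=\cO_X$, hence $\cO_X\subseteq\cG$ and $\cG/\cO_X=\cB\subsetneq\cE$; the same numerology gives $\deg_H(\cB)\le0$, whence $\mu_H(\cG)\le0<\tfrac1{r+1}$. In all cases $\mu_H(\cG)<\mu_H(\cF)$, so $\cF$ is slope-stable, hence simple, and we reach the contradiction $\vv(\cF)^2\le-4$. Therefore $\HH^1(\cE)=0$.

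Two remarks on where the hypotheses enter and on alternatives. The lower bound $g\ge r^2$ is exactly $\vv(\cE)^2\ge-2$, i.e. nonemptiness of $\M(r,H,r)$, so it only makes the statement non-vacuous; the effective content sits in $g<(r+1)^2$, used solely to render $\vv(\cF)^2<-2$. The delicate point is genuinely the slope-stability of $\cF$, since the extension exists for every $g$ and only the arithmetic of the interval $\bigl(\tfrac1{r+1},\tfrac1r\bigr)$ together with the rank bound $\rk(\cB)\le r$ rules out destabilizing subsheaves. As an alternative one could analyze the kernel $\cK=\Ker(\be_{\cO_X,\cE})$ of the evaluation map and bound $\rk(\cK)=\hh^0(\cE)-r$ via $\vv(\cK)^2\ge-2$; but this presupposes (semi)stability of $\cK$, which is exactly what Lemma \ref{stability1} later derives from the present vanishing, so to avoid circularity I would favour the extension argument above.
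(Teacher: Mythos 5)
Your overall strategy is the same as the paper's: assume $\HH^1(\cE)\neq 0$, form the non-split extension $0\to\cO_X\to\cF\to\cE\to 0$, observe that $g<(r+1)^2$ forces $\vv(\cF)^2<-2$, and play this off against stability/simplicity of $\cF$ by analysing a subsheaf $\cG\subset\cF$ through $\cG\cap\cO_X$ and its image in $\cE$. (You run it as ``prove $\cF$ stable, hence simple, hence $\vv(\cF)^2\ge-2$''; the paper runs it as ``$\vv(\cF)^2<-2$ forces a destabilizer, which leads to a contradiction''. These are the same argument up to reorganization.)

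There is, however, a genuine gap in your stability check, and it sits exactly where the paper works hardest. In the branch $\cA=\cG\cap\cO_X=0$ you write $\cG\simeq\cB\subsetneq\cE$ and invoke stability of $\cE$ to get $\deg_H(\cB)/\rk(\cB)<1/r$. But slope-stability only gives the \emph{strict} inequality when $\rk(\cB)<\rk(\cE)=r$. A proper subsheaf $\cB\subsetneq\cE$ of full rank $r$ with $c_1(\cB)=H$ (i.e.\ $\cE/\cB$ of finite length) has $\mu_H(\cB)=1/r$ exactly, and then $\mu_H(\cG)=1/r>1/(r+1)=\mu_H(\cF)$: your argument does not exclude this destabilizer, and your parenthetical only disposes of the case $\cB=\cE$. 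This full-rank case is precisely the last two paragraphs of the paper's proof (there $\rk(\cK'')=r$, $c=1$): one must show that then $\cQ=\cF/\cG$ is a torsion-free rank-one sheaf with $c_1(\cQ)=0$ containing $\cO_X$, hence isomorphic to $\cO_X$ via $\cQ\hookrightarrow\cQ^{**}\simeq\cO_X$, so that the composite $\cO_X\to\cF\to\cQ$ splits the extension --- contradicting non-splitness. (Equivalently, one can note that $\Ext^1(\cE,\cO_X)\to\Ext^1(\cB,\cO_X)$ is injective because $\cE/\cB$ has finite length, so the extension cannot restrict trivially to $\cB$ and no lift $\cG$ of $\cB$ exists.) Without one of these supplementary arguments the claim ``$\cF$ is slope-stable'' is not established; note that the non-splitting hypothesis \emph{must} enter essentially at this point, since a split $\cF=\cO_X\oplus\cE$ is certainly unstable. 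The rest of your proof (the case $\cA\neq 0$, the numerics, the reduction to $\vv(\cF)^2\ge-2$ for simple sheaves) is correct.
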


\begin{proof}
  
  Assume $\HH^1(\cE) \ne 0$ and we will find a contradiction. Note that, by Serre duality, we have
  $\Ext^1(\cE,\cO_X)\simeq \HH^1(\cE)^\vee \ne 0$. Then, we may choose a
  non-zero element of $\Ext^1(\cE,\cO_X)$ and write the corresponding
  non-trivial extension, which takes the form:
  \begin{equation}
    \label{exte}
    0 \to \cO_X \to \cF \to \cE \to 0.  
  \end{equation}

  \correct{Observe that, since the
  bound on $g$ is chosen so that $\vv(\cF)^2 < -2$, the sheaf $\cF$ cannot be slope-stable.
  Then, there is saturated
  destabilizing subsheaf $\cK$ of $\cF$. We set $\cQ= \cF/\cK$ so that
  $\cQ$ is a non-trivial torsion-free coherent sheaf on $X$}. Put $\cK'$ for the
  intersection of $\cK$ and $\cO_X$ in $\cF$, $\cK'' = \cK/\cK'$,
  $\cQ'=\cO_X/\cK'$ and $\cQ''=\cE/\cK''$.

Let us summarize the notation in the following commutative exact diagram:
\footnotesize
\[
\xymatrix@-1ex{
& 0 \ar[d] & 0 \ar[d] & 0 \ar[d]\\
0 \ar[r] & \cK' \ar[r] \ar[d] & \cK \ar[r] \ar[d] & \cK'' \ar[r] \ar[d]& 0\\
0 \ar[r] &  \cO_X \ar[r] \ar[d]&  \cF \ar[r] \ar[d]&  \cE \ar[r] \ar[d]&  0\\
0 \ar[r] & \cQ' \ar[r] \ar[d]& \cQ \ar[r] \ar[d]& \cQ'' \ar[r] \ar[d]& 0. \\
& 0 & 0 & 0
}
\]
\normalsize
  Set $c = \deg_H(\cK)$. Then, since $\cK$ destabilizes $\cF$, we must have:
  \begin{equation}
    \label{desta}
    c \ge 1.
  \end{equation}
	
  Consider the sheaf $\cK'$ and assume $\cK' \ne 0$. Since $\cO_X$
  is locally free of rank $1$, the sheaf $\cK'$ must be torsion-free of
  rank $1$. Therefore, $\cQ'$ is a torsion sheaf, which cannot be
  non-trivial since $\cQ$, and hence $\cQ'$, are torsion-free
  sheaves. This says that $\cQ \simeq \cQ''$. Therefore
  $\rk(\cK'')<r$ as otherwise $\cQ''$ would be a torsion
  sheaf. By stability of $\cE$, we must then have $c\le
  \rk(\cK'') /r$, which is to say $c \le 0$. But this contradicts \eqref{desta}.
  
  We have thus proved that $\cK'=0$, which in turn gives $\cQ' \simeq
  \cO_X$ and $\cK \simeq \cK''$. Since $\cE$ is stable, whenever
  $\rk(\cK'')<r$, we get $c \le 0$, which again contradicts
  \eqref{desta}. Therefore 
  $\rk(\cK'')=r$, so that $\cQ''$ is a torsion sheaf supported on
  a closed subscheme $Z \subsetneq X$. In case $\dim(Z)=1$, then $Z$ is
  a divisor of class $(1-c)H+D$, with $D\in N$. Since $Z$ is effective we would have $(1-c)>0$ 
  which is against \eqref{desta}.
  
  Therefore $Z$ has finite length. This in turn gives
  $c_1(\cQ)=0$. Now, since the sheaf $\cQ$ is torsion-free of rank $1$ and
  with $c_1(\cQ)=0$, the injection $\cO_X \to \cQ$ must be an
  isomorphism, as one sees by composing with the embedding $\cQ \mono
  \cQ^{**} \simeq \cO_X$.
  In other words, composing the injection $\cO_X \mono \cF$ with the projection $\cF
  \to \cQ \simeq \cO_X$ we get an isomorphism. This says that the
  extension \eqref{exte} \correct{splits. This shows that $\Ext^1(\cE,\cO_X)=0$, a contradiction.}
\end{proof}

We deduce the (well-known) regularity \correct{of} Markman-O'Grady's involutions.

\begin{prop}\label{inv}
  Let $X$ be a K3 surface. \correct{Assume that, with respect to the intersection pairing defined on $\Pic(X)$, we have an orthogonal decomposition $\Pic(X)=\ZZ \cdot H\oplus^{\bot}N$} with
  $H^2=2g-2$ and $N$ a lattice which does not contain any effective divisor. Let $r \ge 1$ be an integer with $r^2 \le g < (r+1)^2$. 
  Then, whenever $\cE$ lies in $\M(r,H,r)$, also $\Phi(\cE)$ lies in
  $\M(r,H,r)$. 
	\correct{Therefore the autoequivalence $\Phi$ induces a biregular involution $\Phi$ of $\M(r,H,r)$.}
\end{prop}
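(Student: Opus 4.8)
The plan is to assemble the ingredients already in place---Lemma \ref{no H1}, Lemma \ref{stability1} and Proposition \ref{torsionfree}---after checking that their hypotheses hold for \emph{every} $\cE \in \M(r,H,r)$, not merely for a generic one. It is exactly this uniform validity that upgrades a merely birational statement to a biregular one, and it is made possible by the everywhere-vanishing of $\HH^1$.

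First I would record the cohomological vanishings. Since $\cS=\cO_X$, we have $\Ext^1(\cO_X,\cE)=\HH^1(\cE)$ and $\Ext^2(\cO_X,\cE)=\HH^2(\cE)$. Lemma \ref{no H1} gives $\HH^1(\cE)=0$ directly from $r^2\le g<(r+1)^2$. For the second group, Serre duality on the K3 surface identifies $\HH^2(\cE)$ with $\Hom(\cE,\cO_X)^\vee$; a nonzero map $\cE\to\cO_X$ would exhibit a proper quotient of $\cE$ (or, when $r=1$, an injection of $\cE$) of nonpositive $H$-slope, contradicting the stability of $\cE$, whose slope $\mu_H(\cE)=1/r$ is positive. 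Hence $\Ext^2(\cO_X,\cE)=0$ as well. Finally, from $\chi(\cE)=v_0+v_2=2r$ together with $\HH^1(\cE)=\HH^2(\cE)=0$ I obtain $\HH^0(\cE)=2r>0$, so $\Hom(\cO_X,\cE)\neq 0$.

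With these vanishings I would invoke Lemma \ref{stability1}: its hypothesis (i) is what was just checked; hypothesis (ii) holds because $\vv(\Phi(\cE))=\vv$ by case \ref{s0=1} of Lemma \ref{rk1v} (here $\s\cdot\vv=-2r\neq 0$ and $v_0=v_2=r$), while $c_1(\cE)=H$; and hypothesis (iii) is our assumption on $\Pic(X)$. The lemma then tells us that $\coker(\be_{\cO_X,\cE})$ is supported on isolated points, so in particular $\dim\coker(\be_{\cO_X,\cE})\le 0$, and that $\Ker(\be_{\cO_X,\cE})$ is slope-stable. Feeding these two facts into Proposition \ref{torsionfree}, parts \ref{pi} and \ref{pii}, yields that $\Phi(\cE)$ is torsion-free and slope-stable; combined with $\vv(\Phi(\cE))=(r,H,r)$ this places $\Phi(\cE)$ in $\M(r,H,r)$.

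For the concluding assertion, the argument above applies to all $\cE\in\M(r,H,r)$, so $\Phi$ sends $\M(r,H,r)$ to itself pointwise. By Lemma \ref{involutive functor}, applied with $\cO_X\simeq\RcHom(\cO_X,\cO_X)$ and $\cL=\cO_X$, the anti-auto-equivalence $\Phi$ satisfies $\Phi\circ\Phi\simeq\id$; being everywhere defined on $\M(r,H,r)$ and preserving both stability and the Mukai vector, it induces a morphism of moduli spaces through its action on flat families, and involutivity makes this morphism biregular. I expect the only hand computation of any substance to be the vanishing of $\Ext^2(\cO_X,\cE)$ via stability; the genuine content is concentrated in Lemma \ref{no H1} and Lemma \ref{stability1}, and the passage from an everywhere-defined involutive bijection to a biregular morphism is routine for such derived (anti-)equivalences.
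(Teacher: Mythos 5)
Your proposal is correct and follows essentially the same route as the paper: reduce to the vanishings $\Ext^1(\cO_X,\cE)=\Ext^2(\cO_X,\cE)=0$ (the first from Lemma \ref{no H1}, the second from stability), then feed Lemma \ref{stability1} into Proposition \ref{torsionfree}, with the Mukai vector handled by Lemma \ref{rk1v}. Your extra explicit checks (that $\Hom(\cO_X,\cE)\neq 0$ via $\chi(\cE)=2r$, and the case split for $\Ext^2$ when $r=1$) are details the paper leaves implicit, but the argument is the same.
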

\begin{proof}
  By Lemma \ref{rk1v}, we already know that
  $\vv(\Phi(\cE))=\vv(\cE)$. So, it remains to prove that $\Phi(\cE)$
  is slope-stable and torsion-free. 
  This is a consequence of Proposition \ref{torsionfree} and Lemma
  \ref{stability1} if we know that
  $\Ext^1(\cO_X,\cE)=\Ext^2(\cO_X,\cE)=0$. 
  However, the first vanishing is the content of Lemma \ref{no H1}
  while the second one follows from \correct{the} stability of $\cE$.
\end{proof}

\begin{rmk}
  Note that Proposition \ref{inv} can be generalized when $g \geq
  (r+1)^2$, in this case, we only obtain a birational involution, \correct{as}
  follows for instance from Lemma \ref{stability1} and \cite[Theorem
  1.1]{CosNueYos21} (it is also explained in \cite[Section
  4.2.2]{OGrady}).
  The case $r=1$ corresponds to the Beauville
  involutions (see for instance \cite[Section
  4.1.2]{OGrady} and Example \ref{Beauvilleinv}). 
\end{rmk}

\subsection{Involutions from spherical bundles of rank two}

In this section, we study the involution $\Phi_{\cS,1}$ with
$\vv(\cS)=(2,H,\frac{g}{2})$, see Lemma \ref{rk1v}. The main
point \correct{is} to define new involutions from spherical twists about
bundles of rank $2$, which is the content of our main result, namely
Theorem \ref{MainIntro}.
We are in position to prove \correct{it} here, except for the statement on the
anti-symplectic nature of the involution, which will be checked in the
next section.

\begin{thm}\label{mainth}
  Let $(g,k)\in \NN^2$ with $g\equiv 2$ modulo $4$. 
  Let $(X,H)$ be a polarized K3 surface such that \correct{$\Pic(X)=\ZZ \cdot H\oplus^{\bot} N$} with
  $H^2=2g-2$ and $N$ not containing any effective divisor.
  Let $\cS$ be the spherical bundle \correct{with} Mukai vector $(2,H,\frac{g}{2})$. 
    Consider the Mukai vector $\vv=(v_0,v_1,v_2)$ with
  \[
    v_0=2k+1, \qquad v_1 = (k+1)H, \qquad 2v_2=(2g-2)(k+1)-(2k+1)(\frac{g}{2}-1),\]
  as given by Lemma \ref{rk1v}.
  We assume that $\dim (\M(\vv))\geq 2$, namely $g\geq (2k+1)^2+1$.
  Then $\Phi_{\cS,1}$ induces a birational involution on $\M(\vv)$.
\end{thm}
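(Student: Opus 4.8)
The plan is to verify, for a generic sheaf $\cE\in\M(\vv)$, the two requirements of Objective \ref{mainobj}---that $\vv(\Phi(\cE))=\vv$ and that $\Phi(\cE)$ is a torsion-free slope-stable sheaf---and then to promote this to a birational involution using that $\Phi=\Phi_{\cS,1}$ is already involutive as a functor. The functorial input is in place from the earlier sections: since $\cS$ is a spherical bundle of rank $s_0=2$ with $\det(\cS)=\cO_X(H)$ and $d=1$, so that $\cL=\cO_X(H)=\det(\cS)=\det(\cS)^{2/s_0}$, Lemma \ref{involutive functor} shows that $\Phi$ is an involutive anti-auto-equivalence of $\Db(X)$; and the chosen $\vv$ is precisely the vector singled out in case \ref{s0=2} of Lemma \ref{rk1v}, so that $\vv(\Phi(\cE))=\vv$ as soon as $\Phi(\cE)$ is a coherent sheaf. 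This disposes of condition \ref{oi}.

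Next I would record the cohomological facts needed to invoke Propositions \ref{stability2} and \ref{torsionfree}. Since $\gcd(2k+1,k+1)=1$, no proper subsheaf of a sheaf with Mukai vector $\vv$ can share its slope, so slope- and Gieseker-(semi)stability all coincide for $\vv$; in particular every $\cE\in\M(\vv)$ is $\mu_H$-stable, with $\mu_H(\cE)=\frac{k+1}{2k+1}>\frac12=\mu_H(\cS)$. A short Mukai-pairing computation gives $\langle\s,\vv\rangle=-(2k+1)$, whence $\chi(\cS,\cE)=-\langle\s,\vv\rangle=2k+1>0$. Serre duality and the slope inequality force $\Ext^2(\cS,\cE)\simeq\Hom(\cE,\cS)^\vee=0$, and combined with $\chi(\cS,\cE)>0$ this already yields $\Hom(\cS,\cE)\neq0$ for every $\cE\in\M(\vv)$.

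The hard part, and the only step that genuinely uses the hypothesis $g\ge(2k+1)^2+1$ (equivalently $\vv^2\ge0$), is the vanishing $\Ext^1(\cS,\cE)=0$ for generic $\cE$. Here I would argue by semicontinuity: the jumping locus $W=\{\cE\in\M(\vv):\ext^1(\cS,\cE)>0\}$ is closed, and it suffices to check $W\neq\M(\vv)$, since $\M(\vv)$ is irreducible ($\vv$ being primitive). This is where Yoshioka's Brill--Noether theory for sheaves on K3 surfaces enters \cite{yoshioka:crelle}: in the present numerical range the locus where $\hom(\cS,\cE)$ exceeds the expected value $\chi(\cS,\cE)=2k+1$ has positive codimension, so the generic $\cE$ satisfies $\hom(\cS,\cE)=2k+1$ and $\Ext^1(\cS,\cE)=0$. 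I expect this Brill--Noether estimate to be the principal obstacle of the whole argument.

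Granting the vanishing on a dense open $U\subseteq\M(\vv)$, Proposition \ref{stability2} applies to every $\cE\in U$ and shows that $\coker(\be_{\cS,\cE})$ is supported on isolated points and that $\Ker(\be_{\cS,\cE})$ is slope-stable; Proposition \ref{torsionfree} then gives that $\Phi(\cE)$ is a torsion-free slope-stable sheaf, so $\Phi(\cE)\in\M(\vv)$. To see that the good locus is preserved, I would compute $\Phi(\cS)\simeq\cS[2]$---using $\bT_\cS(\cS)\simeq\cS[-1]$ and $\cS^\vee(H)\simeq\cS$---so that for $\cE\in U$ the anti-auto-equivalence property yields $\Ext^1(\cS,\Phi(\cE))\simeq\Ext^1(\cE,\Phi(\cS))=\Ext^3(\cE,\cS)=0$; hence $\Phi(\cE)\in U$. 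Since $\Phi$ is involutive by Lemma \ref{involutive functor}, it restricts to a regular involution of the dense open subset $U$, and therefore defines a birational involution of $\M(\vv)$, as claimed.
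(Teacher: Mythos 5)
Your proposal is correct and follows essentially the same route as the paper: the paper's proof likewise computes $\s\cdot\vv=-(2k+1)$, derives $\Ext^2(\cS,\cE)=0$ and $\Hom(\cS,\cE)\neq 0$ from stability and Riemann--Roch, obtains $\Ext^1(\cS,\cE)=0$ for generic $\cE$ by citing \cite[Lemma 2.6]{yoshioka:crelle} (offering as an alternative the self-contained dimension count of Lemmas \ref{noExt} and \ref{slopestable} on nonsplit extensions $0\to\cS\to\cF\to\cE\to0$, which you could have invoked instead of deferring entirely to Yoshioka), and concludes via Lemma \ref{rk1v} and Propositions \ref{torsionfree} and \ref{stability2}. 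Your extra verification that the good locus is preserved, using $\Phi(\cS)\simeq\cS[2]$ to get $\Ext^1(\cS,\Phi(\cE))=0$, is a correct refinement that the paper leaves implicit.
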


One of the points we need in order to prove this result is to check that, for
$\cE$ sufficiently general in $\M(\vv)$, we have $\Ext^1(\cS,\cE)=0$.
However, this essentially follows \cite[Lemma 2.6]{yoshioka:crelle}.
We provide here a self-contained proof of this fact.

\begin{lemma}\label{noExt}
  Let $g$ be an integer and let $X$ be a K3
  surface having \correct{$\Pic(X)=\ZZ \cdot H\oplus^{\bot} N$}, with $N$ \correct{not containing} any effective divisor. We consider two Mukai vectors $\s=(s_0,s_1H,s_2)$ and $\vv=(v_0,v_1H,v_2)$ which verify the following conditions:
  \[
    \s^2=-2,
    \qquad \vv\cdot \s\leq 0, \qquad 
    s_1v_0<s_0v_1.
  \]
  Let $\cS$ be the vector bundle of Mukai vector $\s$.
  We assume that $\Ext^1(\cS,\cE) \ne 0$ for $\cE$
  generic in $\M(\vv)$. Then there is a Zariski
  open dense subset $U\subset 
  \M(\vv)$ such that for all $\cE\in U$, a generic nonsplitting
  extension
  $$0 \to \cS \to \cF \to \cE \to 0$$
  gives a non-simple sheaf $\cF$.
\end{lemma}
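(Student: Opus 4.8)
The plan is to argue by contradiction through a parameter count, in the spirit of \cite[Lemma 2.6]{yoshioka:crelle}: assuming that the generic extension $\cF$ is simple, I will derive $\s\cdot\vv\geq 1$, against the hypothesis $\s\cdot\vv\leq 0$.

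First I would organise the extensions into a family. By Serre duality $\Ext^1(\cE,\cS)\simeq\Ext^1(\cS,\cE)^\vee$, so the standing hypothesis that $\Ext^1(\cS,\cE)\neq 0$ for generic $\cE$ means that $n:=\ext^1(\cE,\cS)\geq 1$ is constant on a Zariski open dense $U\subseteq\M(\vv)$; shrinking $U$ if necessary, the projectivised relative $\Ext^1$ is a $\PP^{n-1}$-bundle $\cV\to U$ whose points parametrise the nonsplit extensions $0\to\cS\to\cF\to\cE\to0$. As $U$ is irreducible (being open in the irreducible $\M(\vv)$), so is $\cV$, and $\dim\cV=(\vv^2+2)+(n-1)$. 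Then I would collect the numerical inputs. The inequality $s_1v_0<s_0v_1$ says $\mu_H(\cS)<\mu_H(\cE)$, so stability of $\cS$ and of the generic $\cE$ forces $\Hom(\cE,\cS)=0$, whence $\Ext^2(\cS,\cE)=0$ by Serre duality; together with $\chi(\cS,\cE)=-\s\cdot\vv$ this gives $\hom(\cS,\cE)=n-\s\cdot\vv$. Since $\cS$ is spherical one has $\Hom(\cS,\cS)=\CC$ and $\Ext^1(\cS,\cS)=0$, so applying $\Hom(\cS,-)$ to the extension yields $\hom(\cS,\cF)=1+\hom(\cS,\cE)$.

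Now suppose the generic $\cF$ were simple. Then $\cF\mapsto[\cF]$ defines a rational map $\cV\dashrightarrow\Spl(\s+\vv)$ to the moduli space of simple sheaves, which near such $[\cF]$ is smooth of dimension $(\s+\vv)^2+2=\vv^2+2\,\s\cdot\vv$. The fibre over a generic $[\cF]$ consists of the subsheaves of $\cF$ isomorphic to $\cS$ with quotient in $\M(\vv)$; these are the injections $\cS\hookrightarrow\cF$ up to $\Aut(\cS)=\CC^\ast$, hence an open subset of $\PP\Hom(\cS,\cF)$, of dimension $\hom(\cS,\cF)-1=\hom(\cS,\cE)$. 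Comparing dimensions on $\cV$,
\[
(\vv^2+2)+(n-1)=\dim\cV\leq\big((\s+\vv)^2+2\big)+\big(\hom(\cS,\cF)-1\big),
\]
which after substitution becomes $\vv^2+n+1\leq\vv^2+n+\s\cdot\vv$, i.e. $\s\cdot\vv\geq1$, a contradiction. Hence the simple locus of $\cV$ is not dense; being open in the irreducible $\cV$, it must be empty, so every nonsplit extension over $U$ — in particular the generic one for each $\cE\in U$ — is non-simple.

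The step I expect to cost the most care is the fibre computation: making the relative $\Ext^1$ into an honest projective bundle over $U$, checking via semicontinuity that $n$ and $\hom(\cS,\cE)$ are constant there and that $\Spl(\s+\vv)$ is smooth of the expected dimension at the relevant points, and above all verifying that recovering $\cF$ as an extension of an element of $\M(\vv)$ by $\cS$ is faithfully encoded by the choice of a subsheaf isomorphic to $\cS$, so that the fibre dimension is exactly $\hom(\cS,\cF)-1$. Everything else reduces to the Riemann--Roch and spherical-object identities recorded above.
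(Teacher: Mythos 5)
Your proposal is correct and follows essentially the same route as the paper's proof: the same incidence variety of pairs (sheaf, extension class) over a dense open subset of $\M(\vv)$, the same computation of $\hom(\cS,\cF)$ via Serre duality, stability, Riemann--Roch and the sphericality of $\cS$, and the same dimension comparison with $\Spl(\s+\vv)$ yielding $\s\cdot\vv\geq 1$. Your $n$ is the paper's $a=\min\{h^1(\cS^\vee\otimes\cE)\}$, and the inequality you obtain is identical to the one in the paper.
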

\begin{proof}
  By assumption, we have
  $\Ext^1(\cS,\cE)^\vee \simeq \Ext^1(\cE,\cS) \neq 0$ for all $\cE$ in $\M(\vv)$.
  We define
  \[a = \min\{h^1(\cS ^\vee \otimes \cE) \mid \cE \in
    \M(\vv)\}\]
  Then, we consider the open dense subset $U$ of $\M(\vv)$
  where equality is attained.
  We get a locally free sheaf $\cV$ over $U$ and a projective
  bundle $\PP(\cV) \to U$, a point of $\PP(\cV)$ being
  given by a pair  $(\cE,[\xi])$, where $[\xi]$ is the
  proportionality class of $\xi \in \Ext^1(\cE,\cS) \setminus
  \{0\}$.
  The element $\xi$ represents a nonsplitting extension:
  \begin{equation} \label{extension E S}
    0 \to \cS \xrightarrow{f} \cF \to \cE \to 0.
  \end{equation}
  where the map $f$ is induced by the extension.
  We get the following dimension count:
  \[
    \dim (\PP(\cV))=\dim(\M(\vv))+h^1(\cS^\vee \otimes \cE)=\vv^2+a+1.
  \]

  Note that $\Ext^2(\cS,\cE)^\vee \simeq \Hom(\cE,\cS)=0$ by stability
  of $\cE$ and $\cS$.
  Also, applying $\Hom(\cS,-)$ to \eqref{extension E S} we have
  \[
    0 \to \End(\cS) \to \Hom(\cS,\cF) \to \Hom(\cS,\cE) \to 0.
  \]
  Note that, using $\chi(\cS^\vee \otimes\cE)=-\vv \cdot \s$, we
  get $\dim(\Hom(\cS,\cE))= a - \vv \cdot \s $. Therefore we have
  \[
    \dim(\Hom(\cS,\cF))=\dim(\Hom(\cS,\cE))+1= a - \vv \cdot \s 
    + 1.
  \]
  
  Assume \correct{for}
  contradiction that $\cF$ is simple and consider its class in the moduli
  space $\Spl(\vv+\s)$ of simple sheaves of Mukai vector $\vv+\s$.
  Here, 
  $(\cE,[\xi])$ determines the point
  $(\cF,[f])$, where $[f]$ is the proportionality class of 
  $f \in \Hom(\cS,\cF) \setminus \{0\}$. The space of these pairs has dimension
  \[
    \dim(\Spl(\vv+\s))+\dim(\PP(\Hom(\cS,\cF)) = (\vv+\s)^2+2+a-\vv
    \cdot \s = \vv^2+\vv\cdot\s+a.
  \]

  However, from the pair $(\cF,[f])$ we recover $\cE$ as $\cE \simeq \coker(f)$ and the
  class $[\xi]$ as the extension given by \eqref{extension E S},
  therefore we must have
  \[
    \vv^2+a+1 \le \vv^2+\vv \cdot\s+a.
  \]
  But this \correct{contradicts} the assumption $\vv\cdot\s \leq 0$.
\end{proof}
\begin{lemma}\label{slopestable}
  Let $s_2\in\N$. 
  Let $X$ be a K3
  surface having \correct{$\Pic(X)=\ZZ \cdot H\oplus^{\bot} N$} with $H^2=4s_2-2$ and
  $N$ containing no effective divisor. We consider $\cS$, a slope-stable bundle of Mukai vector $\s=(2,H,s_2)$. 
  Let $\cE$ be a generic slope-stable sheaf of Mukai vector $(2k+1,(k+1)H,v_2)$ with $v_2\in\N$.
  Consider a non-trivial extension
  \[0 \to \cS \to \cF \to \cE \to 0.\]
  Then $\cF$ is slope-stable.
  \end{lemma}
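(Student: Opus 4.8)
The plan is to establish slope-stability of $\cF$ by contradiction, analyzing a hypothetical destabilizing subsheaf through its interaction with the sub-object $\cS$ and the quotient $\cE$. First I would record the relevant slopes: since $c_1(\cS)=H$, $c_1(\cE)=(k+1)H$ and $c_1(\cF)=(k+2)H$, one has
\[
\mu_H(\cS)=\tfrac12,\qquad \mu_H(\cF)=\frac{k+2}{2k+3},\qquad \mu_H(\cE)=\frac{k+1}{2k+1},
\]
so that $\mu_H(\cS)<\mu_H(\cF)<\mu_H(\cE)$, and the crucial numerical input is the tiny gap $\mu_H(\cE)-\mu_H(\cF)=\frac{1}{(2k+1)(2k+3)}$. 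Because $\Pic(X)$ splits orthogonally off a lattice $N$ carrying no effective divisor, every nonzero effective class has positive $H$-degree; hence $\deg_H$ is insensitive to the $N$-part and torsion quotients only increase $\deg_H$, so all slopes are governed by the $H$-coefficient of $c_1$.

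Assuming $\cF$ is not slope-stable, it suffices to derive a contradiction from a proper nonzero saturated subsheaf $\cG\subset\cF$ with $\mu_H(\cG)\ge\mu_H(\cF)$ (saturating only raises the slope, and a full-rank proper subsheaf has strictly smaller slope). I would set $\cG_1=\cG\cap\cS\subseteq\cS$ and $\cG_2=\im(\cG\to\cE)\subseteq\cE$, giving a short exact sequence $0\to\cG_1\to\cG\to\cG_2\to0$. Since $\cF/\cG$ is torsion-free and $\cS/\cG_1$ injects into it, $\cG_1$ is saturated in $\cS$. If $\cG_2=0$ then $\cG=\cG_1\subseteq\cS$, whence $\mu_H(\cG)\le\mu_H(\cS)=\tfrac12<\mu_H(\cF)$, a contradiction. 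If $\cG_1=0$ then $\cG\cong\cG_2\subseteq\cE$ and the inclusion $\cG\hookrightarrow\cF$ splits the extension pulled back along $\cG_2\hookrightarrow\cE$; for $\cG_2=\cE$ this splits the whole sequence, contradicting its non-triviality, while for $\cG_2\subsetneq\cE$ slope-stability of $\cE$ gives $\frac{\deg_H\cG_2}{\rk\cG_2}<\frac{k+1}{2k+1}$, and combined with $\rk\cG_2\le2k+1$ the gap above forces $\mu_H(\cG_2)<\mu_H(\cF)$ — a short computation shows that a proper subsheaf of $\cE$ with slope $\ge\mu_H(\cF)$ would need rank at least $2k+3$, which is impossible.

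It then remains to treat the main case $\cG_1\ne0\ne\cG_2$, writing $\rho_i=\rk\cG_i$ and $d_i=\deg_H\cG_i$. Saturation of $\cG_1$ in $\cS$ together with slope-stability of $\cS$ gives $d_1\le0$ when $\rho_1=1$, and $\cG_1=\cS$ (so $d_1=1$) when $\rho_1=2$; slope-stability of $\cE$ gives $d_2(2k+1)\le\rho_2(k+1)-1$ unless $\cG_2=\cE$ (where $\rho_2=2k+1$, $d_2=k+1$). The configuration $\cG_1=\cS$, $\cG_2=\cE$ forces $\cG=\cF$ and is excluded, and in every remaining sub-case a direct estimate of $\mu_H(\cG)=\frac{d_1+d_2}{\rho_1+\rho_2}$ using these bounds and $\rho_2\le2k+1$ yields $\mu_H(\cG)<\mu_H(\cF)$, contradicting the choice of $\cG$. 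The main obstacle is precisely the delicacy of the case $\cG_1=0$ and of the boundary sub-cases with $\cG_2=\cE$: the required inequality $\mu_H(\cG)\ge\mu_H(\cF)$ sits so close to $\mu_H(\cE)$ that only the arithmetic gap $\frac{1}{(2k+1)(2k+3)}$, which excludes all denominators $\le2k+1$, closes the argument. I note that the proof uses only slope-stability of $\cS$ and $\cE$ together with non-triviality of the extension, so the genericity of $\cE$ is more than sufficient.
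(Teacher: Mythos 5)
Your overall strategy is the same as the paper's: intersect a saturated destabilizing subsheaf $\cG\subset\cF$ with $\cS$, project to $\cE$, and exploit the arithmetic gap between $\mu_H(\cF)=\frac12+\frac1{4k+6}$ and $\mu_H(\cE)=\frac12+\frac1{4k+2}$. The cases $\cG_2=0$, $\cG_1\ne 0$, and $\cG_1=0$ with $\rk(\cG_2)<2k+1$ are handled correctly. There is, however, a genuine gap in the case $\cG_1=0$ with $\cG_2\subsetneq\cE$ of full rank $2k+1$. You invoke ``slope-stability of $\cE$ gives $\deg_H(\cG_2)/\rk(\cG_2)<\frac{k+1}{2k+1}$'', but slope-stability only constrains subsheaves of strictly smaller rank: a full-rank proper subsheaf with zero-dimensional quotient (for instance the kernel of a surjection $\cE\to\cO_p$, or $\cI_{Z'}(H)\subset\cI_Z(H)$ with $Z'\supsetneq Z$ when $k=0$) has $c_1(\cG_2)=c_1(\cE)$ and hence slope exactly $\mu_H(\cE)>\mu_H(\cF)$. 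Consequently your assertion that a proper subsheaf of $\cE$ of slope at least $\mu_H(\cF)$ must have rank at least $2k+3$ is false, and this sub-case --- precisely the delicate one, especially for $k=0$ where $\cE=\cI_Z(H)$ is not locally free --- is not excluded by your argument as written.

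The sub-case can be closed, but it requires an input beyond slope comparisons. The paper does this in two ways: for $k\ge 1$ it takes the generic $\cE$ (hence $\cF$ and the destabilizer) locally free, so that a full-rank locally free subsheaf with the same determinant is forced to equal $\cE$ by looking at the determinant of the inclusion, which reduces to the split case; for $k=0$ it uses that $\Ext^1(T,\cS)=0$ for $T$ zero-dimensional and $\cS$ locally free, so that the torsion quotient would split off the torsion-free sheaf $\cF/\cK$, contradicting saturation. In your set-up the cleanest patch is the latter: if $\cG_1=0$ and $T=\cE/\cG_2$ is a nonzero zero-dimensional sheaf, then the sequence $0\to\cS\to\cF/\cG\to T\to 0$ splits because $\Ext^1(T,\cS)=0$, exhibiting $T$ as a torsion direct summand of the torsion-free sheaf $\cF/\cG$; hence $T=0$, $\cG_2=\cE$, and your splitting argument for the non-trivial extension applies. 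With that addition your proof is complete and essentially identical to the paper's.
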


\begin{proof}
  Let $\cK$ be a stable destabilizing subsheaf of $\cF$.
  We want to check that assuming the existence of $\cK$ we are led to a
  contradiction. As in Lemma \ref{no H1}, write $\cK'$ and $\cK''$ for the subsheaves of $\cS$ and $\cE$ induced
  by the subsheaf $\cK$ of $\cF$.

  We start by proving the result for $k=0$ which is slightly different. In this case $\cE=\cI_Z(H)$ with $Z$ a generic 0-dimensional scheme of the appropriate length. If $\cK''=0$, then the inclusion of $\cK' \simeq \cK$ into $\cS$
  would either give a rank-1 destabilizing subsheaf, or a
  subsheaf of rank $2$ with 
	$\deg_H(\cK')=k'$
	and $k' \ge 2$.
  However both cases are impossible. \par
  Hence $\cK'' \ne 0$, so actually $\cK''$ must be of the form
  $\cI_{Z'}(H)$ for some $0$-dimensional subscheme $Z'$
  containing $Z$, hence $\cQ''=\cI_Z(H)/\cK''$ is a
  $0$-dimensional sheaf. \par
  Now if $\cK'=0$, since $\cQ''$ is 0-dimensional \correct{and} $\cS$ is
  locally free, we get
  $\Ext^1(\cQ'',\cS)=0$, hence $\cQ''$
  is a direct summand of $\cQ=\cF/\cK$, against the assumption
  that $\cK$ is saturated. \par
  However if $\cK'\ne 0$ then $\rk(\cK')=1$, so that in order for $\cK$ to stabilise $\cF$ we
  would need 
		$\deg_H(\cK')=k'$
	with $k' \ge 2$, which is
  impossible by the stability of $\cS$. \par
  
  \smallskip

  Now, we consider the case $k\geq1$, so $\rk\cE\geq2$. Therefore, in this case, for $\cE$ generic, we can assume that $\cE$ is locally free; then $\cF$ and $\cK$ are also locally free.
  Start by writing
  \[
    \mu(\cE)=\frac{k+1}{2k+1}=\frac 12 + \frac 1{4k+2}, \qquad
    \mu(\cF)=\frac{k+2}{2k+3}=\frac 12 + \frac 1{4k+6}.
  \]
    Since $\cK$ destabilizes $\cF$,
  we have:
  \[
    \mu(\cK) \ge \mu(\cF)=\frac 12 + \frac 1{4k+6}, \qquad \rk(\cK) < 2k+3.
  \]

  \begin{itemize}
  \item Assume $\cK''=0$. Then $\cK=\cK'$, so $\rk(\cK) \in \{1,2\}$.
    If $\rk(\cK) = 1$ then by stability of $\cS$ we have 
				$\deg_H(\cK)=a$
		with
    $a \le 0$, so $\cK$ does not destabilize $\cF$.
    If $\rk(\cK) = 2$ then we have 
		$\deg_H(\cK)=a$
				with
    $a \le 1$, so $\cK$ does not destabilize $\cF$.
    This case is thus excluded and we assume from now on $\cK'' \ne 0$.
  \item Let us prove $\mu(\cK'') < \mu(\cE)$.
    By stability of $\cE$, we always have
    $\mu(\cK'') \le \mu(\cE)$ and strict inequality holds if  
    $\rk(\cK'') < \rk(\cE)=2k+1$. 
    \begin{itemize}
    \item If $\cK' = 0$, then $\cK=\cK''$ is a locally free subsheaf
      of $\cE$.
      Note that $\mu(\cK'')=\mu(\cK) < \mu(\cE)$ holds unless $\cK$ and
      $\cE$ have \correct{the} same  
      rank and determinant.
      But in this case,
      since $\cK''$ is a locally free subsheaf of $\cE$ with the same slope
      as $\cE$ and $\cE$ is stable, the inclusion $\cK \to \cE$ is an
      isomorphism and therefore the sequence defining $\cF$ splits, a
      contradiction.
    \item Assume thus $\cK' \ne 0$. Again, $\mu(\cK'')< \mu(\cE)$ unless
      $\rk(\cK'') = \rk(\cE)=2k+1$.  However in this case $\cK'$ must be
      of rank $1$, since $\rk(\cK)<\rk(\cF)$. Then, by \correct{the} stability of
      $\cS$, we get 
						$\deg_H(\cK')=a$
			with
      $a \le 0$, hence
      \[
        \mu(\cK) \le \frac {k+1}{2k+2}.
      \]
      But we immediately check
      \[
        \frac {k+1}{2k+2} <       \frac {k+2}{2k+3} = \mu(\cF).
      \]
      Therefore, we have $\mu(\cK) < \mu(\cF)$, a contradiction.
    \end{itemize}
  \item We proved $\mu(\cK'') < \mu(\cE)$. Also, $\cK$
    is stable and $\cK''$ is a torsion-free quotient of $\cK$
    so $\mu(\cK) \le \mu(\cK'')$, with strict inequality whenever
    $\cK' \ne 0$.
    Hence
    \begin{equation}
      \mu(\cF) \le \mu(\cK) \le \mu(\cK'') < \mu(\cE).
      \label{slopes}
    \end{equation}
    In other words
    \[
      \frac 12+\frac 1{4k+6} \le \mu(\cK) \le \mu(\cK'')< \frac 12+\frac 1{4k+2} 
    \]
    Hence:
    \[
      \frac 12+\frac 1{4k+6} \le \frac{c}{\rk\cK)}< \frac 12+\frac 1{4k+2},
    \]
    with 
		$\deg_H(\cK)=c$.
		    This gives:
    $$\frac{\rk(\cK)}{2}<c<\frac{\rk(\cK)}{2}+1.$$
    We obtain that the only solution is $\rk(\cK)=2r+1$ and $c=r+1$, with $0\leq r\leq k$.
    However, the sequence $\left(\frac{n+1}{2n+1}\right)$ is strictly decreasing. 
    So $\mu(\cK)=\frac{r+1}{2r+1}\geq\frac{k+1}{2k+1}$, which contradicts (\ref{slopes}). 
    Therefore there is no possibility for the slope of $\cK$, this sheaf cannot exist and $\cF$ is then stable.
    
  \end{itemize}
\end{proof}

\begin{proof}[Proof of Theorem \ref{mainth}] 
  Note that $\s\cdot\vv=-(2k+1)$.
  So, as follows from \cite[Lemma 2.6]{yoshioka:crelle}, or by the two
  previous lemmas, we have $\Ext^1(\cS,\cE)=0$ for a generic $\cE\in \M(\vv)$. 
  Since $\cE$ and $\cS$ are stable, we have $\Ext^2(\cS,\cE)=0$ and by Riemann--Roch $\dim (\Hom(\cS,\cE))=2k+1\neq0$.
  Therefore we obtain our result from Lemma \ref{rk1v} and Propositions \ref{torsionfree} and \ref{stability2}.
\end{proof}
\begin{rmk}\label{Hilbremark}
  Note that when $k=0$, the involution $\Phi_{\cS,1}$ acts on
  $X^{[n]}$ with $n=\frac{g+2}{4}$. For $g=10$, this agrees with
  the construction of \cite{beri2022birational}.
  For all $g \equiv 2$ modulo $4$, one can describe this involution by
  taking the residual subscheme with respect to a section of
  $\cS$ vanishing along an element of $X^{[n]}$.
  See the end of \S \ref{examples} for more details on
  this.

\end{rmk}

\begin{rmk}
To go further and construct more involutions, we could also consider a construction more complicated than the one exposed in Definition \ref{maindefi} and Section \ref{introaction}. An idea that could be explored would be to compose several spherical twists.
\end{rmk}

\section{Properties of the involutions}

Here we discuss some basic properties of the involutions
constructed in this paper. After reviewing the connection with
several well-known examples, we provide some results about the
nature of our involutions and 
their
induced action on the cohomology of the moduli space $\M(\vv)$.

\subsection{Examples}\label{examples}

Let us point out how our construction gives a general
framework to many examples of involutions constructed so far
on moduli spaces.

\subsubsection{A quick review of involutions defined by $\cO_X$}

Here we just give a brief account of some well-known involutions and
observe that they can \correct{be} described as $\Phi=\Phi_{\cO_X,0}$.

\begin{ex}\label{Beauvilleinv}
  For $r=1$, let us describe the cases $g=2$ and $g=3$ allowed by the
  inequalities of Theorem \ref{inv}. 
  The moduli space
  $\M(\vv)=\M(1,H,1)$ is identified with the Hilbert scheme of subschemes
  $Z$ of length $g-1$ of $X$. Note that an element of $\M(\vv)$ is of the
  form $\cI_Z(H)$. Taking double dual we easily see
  $\cExt^1(\cI_Z(H),\cO_X) \simeq \omega_Z$ and for subschemes $Z\subset X$ of
  length $1$ or $2$, $\omega_Z$ is identified with $\cO_Z$.

  For $g=3$, $X \subset \PP^3$ is a quartic surface. Let us see that
  $\Phi$ agrees with the Beauville involution sending a length-$2$
  subscheme $Z$ of $X$ to its residual in $X$ with respect to the line
  in $\PP^3$ generated by
  $Z$. For any such $Z \subset X$, we have $h^0(\cI_Z(H))=2$. The base locus in $X$ of the pencil
  of planes through the line $L \subset \PP^3$ spanned by $Z$ is the residual
  subscheme $Z'$ with respect to $L \cap X$. 
    In other words, 
  the cokernel of $\be_{\cI_Z(H)}$ is $\cO_{Z'}$, while
  clearly the kernel of $\be_{\cI_Z(H)}$, being a reflexive sheaf of rank $1$, is just
  $\cO_X(-H)$. We summarize this in the exact sequence:
  \[
    0 \to \cO_X(-H) \to \cO_X^{\oplus 2} \to \cI_Z(H) \to \cO_{Z'} \to 0.
  \]
  The image of the middle map in the above sequence is just $\cI_{\bar
    Z}(H)$, where we put $\bar Z=L \cap X$.
  Dualizing the previous display, we see that \eqref{eq: the map phi} becomes:
  \[
    0 \to \cO_X(-H) \to \cO_X^{\oplus 2} \to \Phi(\cI_Z(H)) \to
    \cO_Z \to 0.
  \]
  It is now clear that $\Phi(\cI_Z(H)) \simeq \cI_{Z'}(H)$.
  
  \smallskip
  By the same argument we see that, for $g=2$, $\Phi$ is just the
  involution exchanging the sheets of the double cover $\pi: X \to \PP^2$,
  defined by sending a point $p$ to $\cI_p(H)$ and then via $\Phi$ to
  $\cI_{p'}(H)$, where $p'$ is the conjugate point of $p$. Indeed, $\cO_{p'}$
  appears as cokernel of the map $\cO_X^{\oplus 2} \to \cI_p(H)$ given
  by the pencil of lines through $\pi(p)$.
\end{ex}

\begin{ex}
  For $r=2$, we have biregular Markman-O'Grady involutions for $g=4,5,6,7,8$. For $g=5$, the surface $X$ is the
  intersection of three quadrics in $\PP^5$ and the moduli space
  $\M(\vv)$ is a K3 surface of genus $2$. More specifically, $\M(\vv)$ can
  be seen as the double cover $\pi: \M(\vv) \to \PP^2$, where the plane
  $\PP^2$ parametrizes the 
  web of quadrics whose base locus is $X$. The branch locus of $\pi$ is the
  discriminant sextic of the web. Pairs of conjugate points $p,p'$ for
  this cover correspond to pairs of spinor bundles $\cG$, $\cG'$ defined over a
  non-singular $4$-dimensional quadric $Q$ of the web. These bundles
  fit into:
  \[
    0 \to \cG^\vee \to \cO_Q^{\oplus 4} \to \cG' \to 0.
  \]
  After restriction to $X$, the above sequence shows that
  $\cG'|_X \simeq \Phi(\cG|_X)$ so $\Phi$ in again the involution exchanging sheets
  of $\pi: \M(\vv) \to \PP^2$.
  
  For $g=7$, the involution $\Phi$ was described at the level of Fano
  threefolds in \cite{brafa1}.
\end{ex}

\begin{ex}
  For $r=3$, in case $g=10$ we have that $\M(\vv)=\M(3,H,3)$ is a K3
  surface of genus $2$. In this case, $X$ is a \correct{codimension $3$}
  linear section of the $5$-dimensional variety $\Sigma$ in $\PP^{13}$
  homogeneous under the exceptional complex Lie group $G_2$. The
  double cover $\pi$ of the plane parametrizing the hyperplanes in
  $\PP^{13}$ whose base locus is $X$ and ramified along the sextic
  curve obtained by cutting the dual of $\Sigma$ along this plane is
  identified with $\M(\vv)$.
  A point of $\M(\vv)$ is a rank-3 vector bundle $\cE$ providing an
  embedding of $X$ in $\GG(3,6)$, cf. \cite{kapustka-ranestad}. The tautological bundles
  $\cG$, $\cG'$ of rank $3$ defined $\GG(3,6)$ fit into:
  \[
    0 \to \cG^\vee \to \cO_{\GG(3,6)}^{\oplus 6} \to \cG' \to 0.
  \]
  Again, restricting to $X$ \correct{in} the above sequence we get that
  $\cG|_X' \simeq \Phi(\cG|_X)$. Exchanging the sheets of $\pi$
  corresponds to exchanging the 
  restricted tautological bundles on $\GG(3,6)$.
\end{ex}

  \subsubsection{Involutions on Hilbert schemes from spherical bundles
  of rank 2}

Here we mention that, if $S$ is a K3 surface of genus $g$ with $g
\equiv 2$ modulo $4$, then, setting $k=0$, the birational involution in Theorem
\ref{mainth} acts on $X^{[n]}$, with $n=(g+2)/4$. Note that $X$
carries a spherical bundle $\cS$ of rank $2$ with $c_1(\cS)=H$ and
$c_2(\cS)=2n$. We get the following result. As we have been
informed, a similar construction appears also in
\cite{beri-manivel:more-birational}.

\begin{prop}
  The involution $\Phi$ sends a generic subscheme $Z \in X^{[n]}$ to its
  residual subscheme with respect to the zero-locus of the only section of $\cS$ vanishing at $Z$.
\end{prop}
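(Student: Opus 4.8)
The plan is to unwind the general exact sequence \eqref{eq: the map phi} in the special case $\vv=\vv(\cI_Z)$ with $k=0$, $\cS$ the rank-$2$ spherical bundle with $\vv(\cS)=(2,H,\frac g2)$, and to identify each term geometrically with the residual-subscheme construction. First I would fix a generic $Z\in X^{[n]}$, so that $\cE=\cI_Z$ is a generic stable sheaf in $\M(\vv)$ and Theorem \ref{mainth} applies: $\Phi(\cE)$ is again a stable sheaf in $\M(\vv)$, hence of the form $\cI_{Z'}$ for some $Z'\in X^{[n]}$ (using that $v_0=1$, $v_1=H$ forces the image to be an ideal sheaf twist). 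So the content of the statement is to identify $Z'$ with the residual of $Z$ in the zero-locus of the distinguished section of $\cS$.

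The key computation is that $\Hom(\cS,\cI_Z)$ is one-dimensional for generic $Z$: indeed by Riemann--Roch $\dim\Hom(\cS,\cI_Z)=\chi(\cS^\vee\otimes\cI_Z)=-\s\cdot\vv=2k+1=1$ when $k=0$, and the higher $\Ext$'s vanish as in the proof of Theorem \ref{mainth}. Dually, $\Hom(\cS,\cI_Z)\simeq H^0(\cS\otimes\omega_Z')^\vee$ or more directly $\Hom(\cI_Z,\cS)^\vee$-type identifications show that a nonzero map $\cS\to\cI_Z$ (equivalently a section of $\cS^\vee$ vanishing suitably) corresponds to the \emph{unique section $s$ of $\cS$ vanishing along $Z$}; here one uses $c_2(\cS)=2n$ so that the zero-locus $V(s)$ of a section of $\cS$ is a length-$2n$ subscheme, and for generic $Z$ of length $n$ there is exactly one such section up to scalar, with $V(s)=Z\sqcup Z'$ the disjoint union of $Z$ and its residual $Z'$. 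With this in hand, the evaluation map $\be_{\cS,\cI_Z}\colon\Hom(\cS,\cI_Z)\otimes\cS\to\cI_Z$ becomes the single map $\cS\to\cI_Z$ induced by $s$, whose image $\cE_0$ is $\cI_{V(s)}(H)=\cI_{Z\sqcup Z'}(H)$ and whose cokernel is supported on $Z'$ (this is exactly the content checked in Proposition \ref{stability2}: $\coker\be$ is $0$-dimensional).

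Next I would feed this into \eqref{eq: the map phi} with $\cL=\cO_X(H)$. The sequence reads
\[
0\to \cI_Z^\vee(H)\to \Hom(\cS,\cI_Z)^\vee\otimes\cS^\vee(H)\to \Phi(\cI_Z)\to \cExt^1(\cI_Z,\cO_X(H))\to 0,
\]
and the self-duality $\cS^\vee(H)\simeq\cS$ (from $\cS\simeq\RcHom(\cS,\cO_X(H))$, which is the involutivity condition of Lemma \ref{involutive functor} for $\s=(2,H,\frac g2)$, $d=1$) turns the middle term back into $\cS$. The term $\cExt^1(\cI_Z,\cO_X(H))$ is identified with $\omega_Z\simeq\cO_Z$ as in Example \ref{Beauvilleinv}. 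Thus $\Phi(\cI_Z)$ sits in an extension whose middle term is $\cS$ mapped into it by the \emph{dual} section, with cokernel supported on $Z$; symmetry of the construction in $Z\leftrightarrow Z'$ then shows $\Phi(\cI_Z)\simeq\cI_{Z'}$.

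The main obstacle I expect is the precise identification of the maps, namely verifying that dualizing the defining sequence of $\cE_0=\cI_{Z\sqcup Z'}(H)$ interchanges the roles of $Z$ and $Z'$, i.e. that the cokernel of the \emph{dual} evaluation is supported exactly on $Z$ and not on $Z\sqcup Z'$ or on $Z'$. This is a local computation at the points of $V(s)$ using the Koszul resolution $0\to\cO_X(-H)\to\cS^\vee\to\cI_{V(s)}\to 0$ associated to the section $s$; dualizing this Koszul complex and tracking the connecting maps is what produces the residual $Z'$ rather than $Z$. Once the two subschemes $Z,Z'$ are seen to be the two halves of the single length-$2n$ zero-scheme $V(s)$, and the duality is seen to swap them, the identification $\Phi(\cI_Z)\simeq\cI_{Z'}$ follows, completing the proof.
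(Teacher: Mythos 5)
Your proposal is correct and follows essentially the same route as the paper: compute $\Hom(\cS,\cI_Z(H))\simeq\CC$ via Riemann--Roch and the generic vanishing of $\HH^1(\cS\otimes\cI_Z)$, identify the unique morphism with the unique section $s$ of $\cS$ vanishing on $Z$ so that $\coker(\be_{\cS,\cI_Z(H)})$ is supported on the residual $Z'$ of $Z$ in $\VV(s)$, and then dualize the Koszul sequence $0\to\cO_X\xrightarrow{s}\cS\to\cI_{\VV(s)}(H)\to 0$ to see that $\Phi(\cI_Z(H))\simeq\cI_{Z'}(H)$ --- the ``main obstacle'' you flag (that dualization swaps $Z$ and $Z'$) is resolved in the paper exactly by the Koszul computation you sketch. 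The only blemish is notational: the elements of $\M(\vv)$ for $k=0$ are the twists $\cI_Z(H)$ rather than $\cI_Z$ (your displayed sequence already uses the correct twist), and this does not affect the argument.
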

  
  \begin{proof}
    Note that $c_2(\cS)=2n$. Let $Z\in X^{[n]}$ be a generic element. According to Riemann--Roch
  and since $\HH^1(\cS \otimes \cI_Z)=0$ for generic $Z$ by
  \cite[Lemma 2.6]{yoshioka:crelle}, 
    we know that $\Hom(\cS,\cI_{Z}(H))=\C$ and $\Ext^i(\cS,\cI_{Z}(H))=0$ for all $i>0$. Let $s\in \Hom(\cS,\cI_{Z}(H))=\HH^0(\cS^\vee\otimes \cI_{Z}(H))=\HH^0(\cS\otimes \cI_{Z})$.
  So $Z\subset \VV(s)$. We obtain the following diagram of exact sequences with $\be$ the evaluation map:
  $$\xymatrix@R10pt{0\ar[r]&\ar@{=}[d]\cO_X\ar[r]^s&\cS\ar@{=}[d]\ar[r]&\cI_{\VV(s)}(H)\ar@{_{(}->}[d]\ar[r]&0
    \\
    0\ar[r]&\cO_X\ar[r]^s&\cS\ar[r]^-{\be} &\cI_{Z}(H)\ar[r] &\coker(\be). }$$
  So $\coker (\be)$ is also the cokernel of the map $\cI_{\VV(s)}(H)\hookrightarrow \cI_{Z}(H)$.
  Therefore $\coker(\be)$ is a torsion sheaf supported
  on a subscheme of length $\ell(\VV(s))-\ell(Z)=n$. We set $Z'\coloneqq\Supp (\coker(\be))$.
  Now, we consider the dual of the evaluation map by $\cO_X(H)$ and we obtain the following commutative diagram:
  $$\xymatrix@R8pt{0\ar[r]&\cO_X\ar[r]^-{\be^{\vee}}&\cS\ar[d]\ar[r]^-{s^{\vee}}&\cO_X(H)\ar[r]&\cExt^2(\coker(\be),\cO_X)\ar[r]&0.\\
    &0\ar[r]&\Phi_{\cS,1}(\cI_Z(H))\ar[ur]& & &}$$

  The sheaf $\cExt^2(\coker(\be),\cO_X)$ is a torsion sheaf supported on $Z'$, so we have $\cExt^2(\coker(\be),\cO_X)=\cO_{Z'}$.
  This shows that $\Phi_{\cS,1}(\cI_Z(H))=\cI_{Z'}(H)$.
  The subscheme $Z'$ described here is the residual with respect to
  the only section $s$ of $\cS$ vanishing at $Z$.
  \end{proof}

\begin{ex}\label{BeriExample}
  The involution studied in \cite{beri2022birational, beri-manivel:more-birational} by Beri and Manivel
  is the same as the involution of Theorem \ref{mainth} with $g=10$
  and $k=0$. Indeed, 
  setting $g=10$ we get $n=3$, so we consider
  $Z=\left\{p_1,p_2,p_3\right\}\in X^{[3]}$, a 
  generic element.
  The bundle $\cS^\vee$ is the restriction of the
  tautological rank 2 bundle on $\G(2,7)$ via the embedding
  $X\hookrightarrow\G(2,7)$. Moreover, we have $\cS=\cS^\vee(H)$ and
  $\HH^0(\cS)=\C^7$.
  For all $s\in \HH^0(\cS)$, we have $\VV(s)=\G(2,6)\cap X\subset \G(2,7)$ and $\VV(s)$ is a subscheme of dimension 0 and of length $c_2(\cS)=6$.
  Hence each point in $X$ can
  be seen as a plane in $\C^7$ and,
  for each $i\in\left\{1,2,3\right\}$, let $P_i$ be the
  plane associated to $p_i$.
  The planes \correct{$P_1,P_2,P_3$ generate} a vector subspace $V_6\subset\C^7$
  of dimension 6. 
  We obtain: $\VV(\widetilde{s})=\G(2,V_6)\subset\G(2,7)$, with
  $\widetilde{s}$ a section of the dual tautological subbundle of
  rank $2$ on $\G(2,7)$.
  We get $s=\widetilde{s}_{|X}\in \HH^0(\cS)$ and 
   $Z\subset \VV(s)=X\cap\G(2,V_6)$ with $\VV(s)$ which is
   0-dimensional of length 6.  
  Beri and Manivel in \cite[Page 8]{beri2022birational} send $Z$ to
  $Z'=\left\{q_1,q_2,q_3\right\}$ with
  $\VV(s)=\left\{p_1,p_2,p_3,q_1,q_2,q_3\right\}$. This is the same as
  our involution since $s\in \HH^0(\cS\otimes \cI_Z)$ and
  $Z'=\{q_1,q_2,q_3\}$ is the residual of $Z=\{p_1,p_2,p_3\}$ with
  respect to the zero-locus of $s$.
\end{ex}
  
\subsection{Anti-symplectic involution}\label{antisympl}
Let $\M(\vv)$ be a compact moduli space of stable sheaves on $X$ of dimension $\geq4$. 
We know from O'Grady \cite{ogrady:Hodge} that there is a Hodge isometry: 
\begin{equation}
  \theta:\vv^{\bot}\subset \HH^*(X,\Z)\rightarrow \HH^2(\M(\vv),\Z),
  \label{Ogradyeq}
\end{equation}
with $\HH^*(X,\Z)$ endowed with the Mukai pairing and $\HH^2(\M(\vv),\Z)$ with the Beauville--Bogomolov form.
The following result has been proved by O'Grady.
\begin{prop}[See {\cite[Proposition 4.14]{OGrady}}]\label{OgradyProp}
  Let $(X,H)$ be a polarized K3 surface such that \correct{$\Pic(X)=\ZZ \cdot H\oplus^{\bot} N$},
 with 
  $H^2=2g-2$ and $N$ not containing effective divisors. Let $\vv=(r,H,r)$ be a Mukai vector with $r\geq2$ and $\dim \M(\vv)\geq4$.
  Let $h_{\vv}\coloneqq\theta(1,0,-1)$ and $R_{h_{\vv}}(\alpha)=-\alpha+B_{\M(\vv)}(h_{\vv},\alpha)h_{\vv}$,
  with $B_{\M(\vv)}$ the Beauville--Bogomolov form on $\M(\vv)$.
  The involution $\Phi_{\cO_X,0}$ on $\M(\vv)$ is anti-symplectic. 
  Moreover \correct{the action of $\Phi_{\cO_X,0}$ on the cohomology $H^2(M(\vv,\Z)$ is given by}: $$\Phi_{\cO_X,0}^*=R_{h_{\vv}}.$$
\end{prop}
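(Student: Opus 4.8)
The plan is to transport the whole statement to the Mukai lattice through O'Grady's Hodge isometry $\theta\colon \vv^{\bot}\to \HH^2(\M(\vv),\Z)$ of \eqref{Ogradyeq}, which is available because $\dim\M(\vv)\ge 4$, and then to match the resulting map on $\vv^{\bot}$ with $R_{h_{\vv}}$. Note that $\Phi=\Phi_{\cO_X,0}$ is a biregular involution of $\M(\vv)$ by Proposition \ref{inv}, so its action $\Phi^*$ on $\HH^2(\M(\vv),\Z)$ is well defined.

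First I would compute the cohomological action $\phi$ of $\Phi$ on the Mukai lattice. Specializing Lemma \ref{v} to $\cS=\cO_X$, $\s=(1,0,1)$ and $d=0$ (so $\ch(\cO_X)=1$) gives
\[
  \phi(w_0,c_1,w_2)=-D\big(R_{\cO_X}(w_0,c_1,w_2)\big)=(w_2,c_1,w_0),
\]
an involutive isometry of $\HH^*(X,\Z)$ that fixes $\vv=(r,H,r)$ and hence restricts to $\vv^{\bot}$. Setting $\delta\coloneqq(1,0,-1)$, one checks $\delta\in\vv^{\bot}$, $\delta\cdot\delta=2$, and a direct computation gives, for every $x\in\vv^{\bot}$,
\[
  -\phi(x)=-x+(x\cdot\delta)\,\delta.
\]
Since $h_{\vv}=\theta(\delta)$ and $\theta$ is an isometry, $x\cdot\delta=B_{\M(\vv)}(\theta(x),h_{\vv})$, so conjugating $-\phi|_{\vv^{\bot}}$ by $\theta$ produces exactly $R_{h_{\vv}}$. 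Thus the formula $\Phi^*=R_{h_{\vv}}$ reduces to the identity
\[
  \Phi^*=\theta\circ\big(-\phi|_{\vv^{\bot}}\big)\circ\theta^{-1}.
\]

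This identity is where the contravariant nature of $\Phi$ enters, and it is the \emph{crux} of the argument. For a covariant Fourier--Mukai equivalence with cohomological action $\psi$, the induced map on $\HH^2(\M(\vv))$ equals $\theta\circ\psi|_{\vv^{\bot}}\circ\theta^{-1}$, with no extra sign; the subtlety is that $\Phi=[1]\circ\RcHom(-,\cO_X)\circ\bT_{\cO_X}$ contains the duality functor, which reverses Yoneda composition. Concretely, the holomorphic symplectic form on $\M(\vv)$ is Mukai's form $\sigma(a,b)=\tr(a\circ b)$ on $\Ext^1(\cE,\cE)$, and the degree-one Yoneda product is antisymmetric; since $\Phi$ induces isomorphisms $\Ext^i(\cE,\cE)\simeq\Ext^i(\Phi(\cE),\Phi(\cE))$ that reverse composition, it multiplies $\sigma$ by $-1$, forcing the extra global sign. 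I would make this precise following O'Grady, either by the trace-sign bookkeeping just sketched or by computing the action on $\HH^2(\M(\vv))$ of each constituent functor $\bT_{\cO_X}$, $\RcHom(-,\cO_X)$ and $[1]$ separately; this is the main obstacle.

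Finally, anti-symplecticity is immediate once $\Phi^*=R_{h_{\vv}}$ is known. Writing $\sigma_{\M}=\theta(\sigma_X)$ with $\sigma_X\in\HH^{2,0}(X)\subset\vv^{\bot}\otimes\C$ transcendental, we have $B_{\M(\vv)}(h_{\vv},\sigma_{\M})=\delta\cdot\sigma_X=0$ because $\delta$ is algebraic, whence $R_{h_{\vv}}(\sigma_{\M})=-\sigma_{\M}$. Equivalently, $-\phi$ acts by $-1$ on $\HH^{2,0}(X)$ since $\phi$ fixes the $\HH^2(X)$-component, so $\Phi^*$ acts by $-1$ on $\HH^{2,0}(\M(\vv))$, that is, $\Phi$ is anti-symplectic.
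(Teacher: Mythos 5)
Your reduction is sound and consistent with the paper's framework: the computation $-D\circ R_{\cO_X}(w_0,c_1,w_2)=(w_2,c_1,w_0)$, the identification of $-\phi|_{\vv^{\bot}}$ with the reflection $x\mapsto -x+\left(x\cdot(1,0,-1)\right)(1,0,-1)$, and the derivation of anti-symplecticity from the final formula are all correct. The problem is that the identity $\Phi^*=\theta\circ\left(-\phi|_{\vv^{\bot}}\right)\circ\theta^{-1}$ is not a reduction of the proposition --- it \emph{is} the proposition --- and you do not prove it; you defer to ``following O'Grady''. Moreover, the substitute you sketch (duality reverses Yoneda composition, and the symplectic form $\tr(a\circ b)$ on $\Ext^1(\cE,\cE)$ is antisymmetric) only controls the action on $\HH^{2,0}(\M(\vv))$: it would at best re-derive anti-symplecticity, but it says nothing about $\Phi^*$ on the algebraic part of $\HH^2(\M(\vv),\Z)$, which here has rank at least $2$, so it cannot pin down the full formula $\Phi^*=R_{h_{\vv}}$. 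There is also no purely lattice-theoretic escape: an involutive anti-symplectic Hodge isometry is not determined by its restriction to the transcendental lattice.

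For context, the paper does not reprove this proposition (it is quoted from O'Grady), but it does prove the rank-two analogue in Lemma \ref{CohomologyLemma}, and that proof shows exactly what your missing step requires: take a quasi-universal family $\cR$ on $X\times\M(\vv)$, write $(\id\times\Phi)^*\cR$ as $\RcHom(A,\sigma^*\cL)$ where $A$ is the relative version of the mutation triangle \eqref{TS}, and compute $\theta_{(\id\times\Phi)^*\cR}(\alpha)$ via the projection formula and Grothendieck--Riemann--Roch. In that computation the global sign you are after does not come from the antisymmetry of the Yoneda product but from $\ch(\RcHom(A,-))=\ch(A)^{\vee}\cdot(\dots)$ together with the fact that dualization acts by $(-1)^3$ on the degree-$6$ component of $\HH^*(X\times\M(\vv),\Z)$. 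Carrying out this computation for $\cS=\cO_X$ and $d=0$ (where it is strictly simpler than in Lemma \ref{CohomologyLemma}, since the twist $T$ disappears) is what is needed to close the gap.
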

\begin{rmk}
  O'Grady also gives the cohomology action of the Beauville involution in \cite[Proposition 4.21]{OGrady}.
\end{rmk}
Using the same ideas as O'Grady, we are going to provide a generalization of Proposition \ref{OgradyProp} to the involution $\Phi_{\cS,1}$.
We consider a Mukai vector $(v_0,v_1 H,v_2)$ with $v_2=(g-1)v_1-\frac{v_0}{2}(\frac{g}{2}-1)$. 
Moreover, we assume that $\dim \M(\vv)\geq4$. As before $\cS$ is the stable bundle on $X$ with Mukai vector $(2,H,\frac{g}{2})$. Let $U(\vv)\subset \M(\vv)$ be the set where $\Phi_{\cS,1}$ is well defined.
Let $d_{\vv}\coloneqq\theta(2,H,\frac{g}{2}-1)$ and $R_{d_{\vv}}(\alpha)=-\alpha+B_{\M(\vv)}(d_{\vv},\alpha)d_{\vv}$, with $B_{\M(\vv)}$ the Beauville--Bogomolov form.
\begin{lemma}\label{CohomologyLemma}
We assume that $U(\vv)$ is a non-empty Zariski open set in $\M(\vv)$.
  Let $\iota_{\vv}:U(\vv)\hookrightarrow \M(\vv)$ be the embedding. \correct{The action of $\Phi_{\cS,1}$ on $H^2(M(\vv,\Z))$ satisfies:}
  $$\iota_{\vv}^*\circ\Phi_{\cS,1}^*=\iota_{\vv}^*\circ R_{d_{\vv}}.$$
\end{lemma}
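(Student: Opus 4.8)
The plan is to adapt O'Grady's proof of Proposition \ref{OgradyProp} (that is, \cite[Proposition 4.14]{OGrady}) to the anti-autoequivalence $\Phi_{\cS,1}$, by combining a purely lattice-theoretic identity with the equivariance of the O'Grady isometry $\theta$ under the relative version of $\Phi_{\cS,1}$ over $U(\vv)$. Since $\Phi_{\cS,1}\circ\iota_{\vv}$ is an honest morphism $U(\vv)\to\M(\vv)$, the left-hand side $\iota_{\vv}^{*}\circ\Phi_{\cS,1}^{*}$ is simply the pullback $(\Phi_{\cS,1}|_{U(\vv)})^{*}$; this is also the reason why the statement is formulated after restriction along $\iota_{\vv}$.

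First I would record the cohomological action of $\Phi_{\cS,1}$. By Lemma \ref{v} the induced map $\phi$ on $\HH^*(X,\Z)$ is $\phi(x)=-\left(D\circ R_{\cS}(x)\right)\cdot\ch(\cO_X(H))$. Set $\s'=(2,H,\tfrac g2-1)$. The condition $v_2=(g-1)v_1-\tfrac{v_0}2(\tfrac g2-1)$ is exactly $\s'\in\vv^{\perp}$, and one checks $(\s')^2=2$. Expanding $\phi$ on a general class $x=(a,\beta,c)$ through the reflection $R_{\cS}$, the duality $D$ and the twist by $\ch(\cO_X(H))$, a direct computation yields the key identity
\[
  -\phi=R_{\s'}\quad\text{on }\HH^*(X,\Z),\qquad\text{where } R_{\s'}(x)=-x+(x\cdot\s')\,\s'.
\]
Because $\theta$ is a Hodge isometry with $d_{\vv}=\theta(\s')$, conjugation by $\theta$ sends $R_{\s'}|_{\vv^{\perp}}$ to $R_{d_{\vv}}$, i.e. $\theta\circ R_{\s'}=R_{d_{\vv}}\circ\theta$ on $\vv^{\perp}$.

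The main step is the equivariance of $\theta$. Over $U(\vv)$ I would apply $\Phi_{\cS,1}$ in families to a quasi-universal sheaf on $\M(\vv)\times X$, producing a family over $U(\vv)$ that classifies $\Phi_{\cS,1}|_{U(\vv)}$. Computing $\theta$ through the Chern character of the universal sheaf as O'Grady does, the relative anti-autoequivalence transforms this Chern character by the cohomological action, and the bookkeeping — including the global sign coming from the duality and the shift $[1]$, exactly as in \cite[Proposition 4.14]{OGrady} — gives
\[
  \iota_{\vv}^{*}\circ\Phi_{\cS,1}^{*}\circ\theta=\iota_{\vv}^{*}\circ\theta\circ(-\phi)\big|_{\vv^{\perp}}.
\]
Combining this with the lattice identity and with $\theta\circ R_{\s'}=R_{d_{\vv}}\circ\theta$ gives $\iota_{\vv}^{*}\circ\Phi_{\cS,1}^{*}\circ\theta=\iota_{\vv}^{*}\circ R_{d_{\vv}}\circ\theta$, and since $\theta:\vv^{\perp}\to\HH^2(\M(\vv),\Z)$ is an isomorphism we conclude $\iota_{\vv}^{*}\circ\Phi_{\cS,1}^{*}=\iota_{\vv}^{*}\circ R_{d_{\vv}}$.

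The lattice identity $-\phi=R_{\s'}$ is routine. The delicate part, and the main obstacle, is the equivariance step: one has to set up the relative anti-autoequivalence carefully over $U(\vv)$ and to track correctly the variance and the global sign introduced by the duality and the shift in the definition of $\Phi_{\cS,1}$. This is precisely where we lean on the computation already carried out by O'Grady for the case $\cS=\cO_X$, $d=0$.
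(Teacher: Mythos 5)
Your proposal follows essentially the same route as the paper: compute $\iota_{\vv}^*\circ\Phi_{\cS,1}^*\circ\theta_{\cR}$ by applying the relative version of $\Phi_{\cS,1}$ to a quasi-family $\cR$ and using O'Grady's formula for $\theta$, then identify the resulting operator on $\vv^{\perp}$ with the reflection in $\s'=(2,H,\frac g2-1)$ and hence, after conjugating by $\theta$, with $R_{d_{\vv}}$. Your lattice identity is correct (the paper only verifies the identity on $\vv^{\perp}$, which is all that is needed; and writing $-\phi$ rather than its inverse $R_{\cS}\circ D\circ T$ is harmless, since both coincide with the involution $R_{\s'}$). The one point where the write-up falls short of a proof is the equivariance step, which is in fact the whole content of the lemma: it cannot simply be imported from O'Grady's case $\cS=\cO_X$, $d=0$. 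For $\cS$ of rank $2$ and twist by $\cO_X(H)$, the relative functor replaces $\cR$ by the cone of $\pi^*\pi_*(\cR\otimes\sigma^*(\cS^\vee))\otimes\sigma^*\cS\to\cR$ followed by $\RcHom(-,\sigma^*\cO_X(H))$, and one must compute the Chern character of the term $\pi^*\pi_*(\cR\otimes\sigma^*(\cS^\vee))\otimes\sigma^*\cS$ via the projection formula and Grothendieck--Riemann--Roch; this produces the correction $(\vv(\cS)\cdot\beta)\,\theta_{\cR}(\vv(\cS))$ with $\beta=(\alpha\,\ch(\cO_X(-H)))^\vee$, whose precise form depends on $\vv(\cS)$ and on the twist and therefore has to be recomputed rather than quoted. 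The paper carries out this computation explicitly; your proposal correctly predicts its outcome and organizes the argument in the same way, but the decisive calculation is asserted rather than performed.
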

\begin{proof}
  \correct{Let $\cR$ be} a quasi-family of sheaves on $X$ parametrized by $\M(\vv)$, where $\vv$ is the Mukai vector. 
  We also set $\sigma:X\times \M(\vv)\rightarrow X$ and
  $\pi:X\times \M(\vv)\rightarrow \M(\vv)$ for the natural projections. We set $\HH^4(X,\Z)=\Z \eta$.
  We know (see for instance \cite[4.2.8]{OGrady}) that:
  $$\theta(\alpha)=\theta_{\cR}(\alpha)=\frac{1}{\rho(\cR)}\pi_*\left[\ch(\cR)(1+\sigma^*(\eta))\sigma^*(\alpha^\vee)\right]_6,$$
  with $\rho(\cR)$ an integer such that $\cR_{|X\times\left\{t\right\}}=F^{\rho(\cR)}$ for all $t\in \M(\vv)$ and with $F$ a sheaf with Mukai vector $\vv$. 
  We want to compute: $$\iota_{\vv}^*\circ\Phi_{\cS,1}^*\theta_{\cR}(\alpha)=\theta_{(\id\times\Phi)^*\cR_{|X\times U(\vv)}}(\alpha).$$
  For simplicity in the notation, we are going to denote $\cR_{|X\times U(\vv)}$ also by $\cR$.
  The first step is to understand $(\id\times\Phi)^*\cR$. 
  It is given by the complex $B\coloneqq\RcHom(A,\sigma^*(\cO_X(H))),$
  with $$A\coloneqq\pi^*(\pi_*(\cR\otimes\sigma^*(\cS^\vee)))\otimes\sigma^*\cS\rightarrow\cR.$$
    We have:
  \begin{align*}
    \ch(B)=\ch(\RcHom(A,\sigma^*(\cO_X(H))))&=\ch(\RcHom(A,\sigma^*(\cO_X)))\ch(\sigma^*(\cO_X(H)))=\\
                                            &=\ch(A)^\vee \ch(\sigma^*(\cO_X(H))).
  \end{align*}
  Since $\Phi_{\cS,1}$ is well defined on $U(\vv)$, $B$ is a quasi-family of sheaves and we can consider:
  \begin{align*}
    \theta_{B}(\alpha)&=\pi_*\left[\ch(A)^\vee(1+\sigma^*(\eta))\sigma^*(\alpha^\vee \ch(\cO_X(H)))\right]_6\\
                      &=\pi_*\left[\ch(A)^\vee(1+\sigma^*(\eta))\sigma^*(\alpha
                        \ch(\cO_X(-H)))^\vee\right]_6.
  \end{align*}
  We refer to Section \ref{notata} for the computation of $\alpha\ch(\cO_X(-H))$.
  We set $\beta\coloneqq (\alpha \ch(\cO_X(-H)))^\vee$.
  We obtain:
  \begin{align*}
    \theta_{B}(\alpha)&=-\pi_*\left[\ch(A)(1+\sigma^*(\eta))\sigma^*(\beta^\vee)\right]_6\\
                      &=\pi_*\left[\ch(\cR)(1+\sigma^*(\eta))\sigma^*(\beta^\vee)\right]_6\\
                      &-\pi_*\left[\ch(\pi^*(\pi_*(\cR\otimes\sigma^*(\cS^\vee)))\otimes\sigma^*(\cS))(1+\sigma^*(\eta)\sigma^*(\beta^\vee)\right]_6.\ \ \ \ \ \ \ \ \ \ \ \ \ \ \ \correct{(\clubsuit)}
											\end{align*}
  The first term of the last equality is just $\theta_{\cR}(\beta)$. It remains to compute the second term; we set:
  $$(\heartsuit)\coloneqq\pi_*\left[\ch(\pi^*(\pi_*(\cR\otimes\sigma^*(\cS^\vee)))\otimes\sigma^*\cS)(1+\sigma^*(\eta)\sigma^*(\beta^\vee)\right]_6.$$
  By projection formula, we have:
  \begin{align*}
    (\heartsuit)&=\left[\ch(\pi_*(\cR\otimes\sigma^*(\cS^\vee)))\pi_*(\ch(\sigma^*(\cS))(1+\sigma^*(\eta))\sigma^*(\beta^\vee))\right]_6\\
       &=c_1(\pi_*(\cR\otimes\sigma^*(\cS^\vee)))\pi_*\left[\sigma^*(\ch(\cS)(1+\sigma(\eta)))\beta^\vee\right]_4.
  \end{align*}
  We set $(\diamondsuit)\coloneqq c_1(\pi_*(\cR\otimes\sigma^*(\cS^\vee)))$ and $(\spadesuit)\coloneqq\pi_*\left[\sigma^*(\ch(\cS)(1+\sigma(\eta)))\beta^\vee\right]_4$.
  We first compute $(\diamondsuit)$; according to Grothendieck--Riemann--Roch theorem, we have:
  \begin{align*}
    (\diamondsuit)&=\pi_*\left[\ch(\cR\otimes\sigma^*(\cS^\vee))\sigma^*(1+2\eta)\right]_6\\
        &=\pi_*\left[\ch(\cR)\sigma^*(\ch(\cS^\vee)(1+2\eta))\right]_6.
  \end{align*}
  However, we have $\ch(\cS^\vee)=(2,-H,\frac{g}{2}-2)$. So: $\ch(\cS^\vee)(1+2\eta)=(2,-H,\frac{g}{2}+2)=(2,-H,\frac{g}{2})(1+\eta)=\vv(\cS)^\vee (1+\eta)$. 
  So:
  $$(\diamondsuit)=\theta_{\cR}(\vv(\cS)).$$
  It remains to compute $(\spadesuit)$. We set $\beta=(\beta_0,\beta_1,\beta_2)$.
  \begin{align*}
    (\spadesuit)&=\pi_*\left[\sigma^*\left(\left(2,H,\frac{g}{2}-2\right)(1+\eta)(\beta_0,-\beta_1,\beta_2)\right)\right]_4\\
         &=\pi_*\left[\sigma^*\left(\left(2,H,\frac{g}{2}\right)(\beta_0,-\beta_1,\beta_2)\right)\right]_4\\
         &=2\beta_2-(2g-2)\beta_1+\frac{g}{2}\beta_0\\
         &=-\left(2,H,\frac{g}{2}\right)\cdot\beta\\
         &=-\vv(\cS)\cdot\beta.
  \end{align*}
	\correct{We replace ($\diamondsuit$) and ($\spadesuit$) in ($\heartsuit$) and then ($\heartsuit$) in ($\clubsuit$) to obtain:}
  $$\iota_{\vv}^*\Phi^*\theta_{\cR}(\alpha)=\iota_{\vv}^*\left[\theta_{\cR}(\beta)+\left(\vv(\cS)\cdot\beta\right)\theta_{\cR}(\vv(\cS))\right].$$
  We set $R_{\cS}(x)=x+\left(\vv(\cS)\cdot x\right) \vv(\cS)$, $D$ the dual and $T$ the tensorization by $\cO_X(-H)$ of Mukai vectors. We have shown that:
	\begin{equation}
  \iota_{\vv}^*\Phi^*\theta_{\cR}(\alpha)=\iota_{\vv}^*\circ\theta_{\cR}\circ R_{\cS}\circ D \circ T(\alpha).
	\label{action2}
	\end{equation}
  Then it remains to show that, 		for all $x\in
  \vv^{\bot}$, 
  $$R_{\cS}\circ D \circ T(x)=-x+\left[\left(2,H,\frac{g}{2}-1\right)\cdot x\right]\left(2,H,\frac{g}{2}-1\right),$$
  It is true for all elements in $(\HH^0(X,\Z)\oplus \Z H
  \oplus \HH^4(X,\Z))^{\bot}$ and we can verify easily
  that it is also true for a basis of
  $\left(\HH^0(X,\Z)\oplus \Z H \oplus
    \HH^4(X,\Z)\right)\cap \vv^{\bot}$, for instance
  $((2,H,\frac{g}{2}-1),(v_0,0,-v_2))$.
\end{proof}

\begin{rmk}\label{actionrk}
A priori the set $\M(\vv)\smallsetminus U(\vv)$ could have codimension
1. To be more precise, the involution $\Phi$ can always be extended to
a bimeromophism regular in codimension 2 \correct{(see for instance \cite[Lemma 3.2]{lol2})}, but $\Phi$ as constructed in
Section \ref{31construction} may be defined only in codimension 1. 
 Therefore, if we also denote by $\Phi$ the extension of $\Phi$ to a
 regular involution in codimension 2, the action of $\Phi^*$ on
 $\HH^2(\M(\vv),\Z)$ is well defined but could differ from the one
 found in Lemma \ref{CohomologyLemma}. 
\end{rmk}

\begin{rmk}
If $\M(\vv)\smallsetminus U(\vv)$ has codimension 2 then according to
\eqref{action2} the action of $\Phi^*$ on $\HH^2(\M(\vv),\Z)$  is given
by : 
$$\Phi^*=\theta \circ R_{\cS}\circ D \circ T\circ \theta^{-1}.$$
Note that there is another action on $\HH^2(\M(\vv),\Z)$ obtained from
the computation on Mukai \correct{vectors}, see Lemma \ref{v}.
It provides the following action :
\begin{align*}
\varphi&=-\left(\theta\circ T^{-1}\circ D\circ
      R_{\cS}\circ \theta^{-1}\right).\\
			&\correct{=-(\Phi^*)^{-1}.}
			\end{align*}
\correct{And since $\Phi^*$ is an involution,} this shows that $\varphi=-\Phi^*$.
\end{rmk}

As explained in Remark \ref{actionrk}, we cannot always determined the
action of $\Phi^*$ on $\HH^2(\M(\vv),\Z)$, however we can deduce from
the previous lemma that $\Phi_{\cS,1}$ is anti-symplectic. 

\begin{prop}\label{anti}
  Let $g\geq2$ even. Let $(X,H)$ be a K3 surface with
  	$H^2=2g-2$ and $\cS$ the stable
  bundle of Mukai vector $(2,H,\frac{g}{2})$. Let
  $\vv=(v_0,v_1 H,(g-1)v_1-\frac{v_0}{2}(\frac{g}{2}-1))$ be a
  Mukai vector (we assume that
  $\frac{v_0}{2}(\frac{g}{2}-1)$ is integral). We set $U(\vv)$ the open set where $\Phi_{\cS,1}$ is well defined. We assume that: 
	\begin{itemize}
	\item
	$\dim \M(\vv)\geq4$;
\item
 $U(\vv)$ is a non-empty Zariski open set in $\M(\vv)$.
	\end{itemize}
  Then, the involution $\Phi_{\cS,1}$ on $\M(\vv)$ is anti-symplectic. 
\end{prop}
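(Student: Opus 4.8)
The plan is to reduce the anti-symplecticity of $\Phi_{\cS,1}$ to its action on the holomorphic symplectic form, which is entirely controlled by Lemma \ref{CohomologyLemma}. Since $\M(\vv)$ is an irreducible holomorphic symplectic manifold, the space $\HH^{2,0}(\M(\vv))$ is one-dimensional, spanned by the symplectic form $\sigma$. A birational involution of an IHS manifold is an isomorphism in codimension two, hence induces a Hodge isometry of $\HH^2(\M(\vv),\Z)$; in particular $\Phi_{\cS,1}^*$ preserves $\HH^{2,0}(\M(\vv))$ and acts on $\sigma$ by a scalar $\lambda$ with $\lambda^2=1$, so $\lambda\in\{1,-1\}$. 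Being anti-symplectic is exactly the statement $\lambda=-1$.

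First I would record that $(2,H,\frac g2-1)$ lies in $\vv^{\bot}$, so that $d_{\vv}=\theta(2,H,\frac g2-1)$ is defined: pairing it against $\vv=(v_0,v_1H,(g-1)v_1-\frac{v_0}2(\frac g2-1))$ in the Mukai lattice gives $v_1(2g-2)-2v_2-(\frac g2-1)v_0=0$ after substituting $v_2$. Moreover $(2,H,\frac g2-1)^2=H^2-4(\frac g2-1)=2$, so $B_{\M(\vv)}(d_{\vv},d_{\vv})=2$ and $R_{d_{\vv}}$ is a genuine reflection. The key observation is that $(2,H,\frac g2-1)$ is an algebraic class, i.e.\ of Hodge type $(1,1)$ in the Mukai lattice, since its $\HH^2$-component is the ample class $H$. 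As $\theta$ is a Hodge isometry, $d_{\vv}$ is of type $(1,1)$, hence orthogonal to $\sigma\in\HH^{2,0}(\M(\vv))=\theta(\HH^{2,0}(X))$ for the Beauville--Bogomolov form, and therefore
\[
  R_{d_{\vv}}(\sigma)=-\sigma+B_{\M(\vv)}(d_{\vv},\sigma)\,d_{\vv}=-\sigma.
\]

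Next I would feed this into Lemma \ref{CohomologyLemma}. Applying $\iota_{\vv}^*\circ\Phi_{\cS,1}^*=\iota_{\vv}^*\circ R_{d_{\vv}}$ to the class $\sigma$ and using $\Phi_{\cS,1}^*\sigma=\lambda\sigma$ yields
\[
  \lambda\,\iota_{\vv}^*\sigma=\iota_{\vv}^*R_{d_{\vv}}(\sigma)=-\iota_{\vv}^*\sigma.
\]
Since $\sigma$ is a non-degenerate holomorphic two-form, its restriction to the dense Zariski-open set $U(\vv)$ is nonzero, so $\iota_{\vv}^*\sigma\neq0$; hence $\lambda=-1$ and $\Phi_{\cS,1}$ is anti-symplectic.

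The main subtlety, already flagged in Remark \ref{actionrk}, is that Lemma \ref{CohomologyLemma} identifies $\Phi_{\cS,1}^*$ with $R_{d_{\vv}}$ only after restriction to $U(\vv)$, and these two isometries of $\HH^2(\M(\vv),\Z)$ may genuinely differ when $\M(\vv)\smallsetminus U(\vv)$ has codimension one. This does not affect the argument: anti-symplecticity concerns the single line $\HH^{2,0}(\M(\vv))$, and restriction of holomorphic two-forms to the dense open $U(\vv)$ is injective, so the scalar $\lambda$ is detected on $U(\vv)$ even though the full lattice action is not. The only external input I rely on is that a birational involution of the IHS manifold $\M(\vv)$ acts on $\HH^{2,0}$ by a well-defined scalar, which follows from its being an isomorphism in codimension two.
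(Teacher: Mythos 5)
Your overall strategy coincides with the paper's: both proofs reduce anti-symplecticity to the identity $\iota_{\vv}^*\circ\Phi_{\cS,1}^*=\iota_{\vv}^*\circ R_{d_{\vv}}$ of Lemma \ref{CohomologyLemma}, observe that $d_{\vv}=\theta(2,H,\frac{g}{2}-1)$ is of type $(1,1)$ (so $R_{d_{\vv}}$ acts as $-\id$ on the transcendental part, in particular on $\sigma$), and then need the restriction $\iota_{\vv}^*$ not to kill the class being tested. Your computations that $(2,H,\frac{g}{2}-1)\in\vv^{\bot}$, that its square is $2$, and that $\Phi_{\cS,1}^*$ acts on $\HH^{2,0}$ by a scalar $\lambda$ with $\lambda^2=1$ are all correct.

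The gap is in the one step that actually carries the content of the proof: your justification that $\iota_{\vv}^*\sigma\neq 0$. You argue that the restriction of $\sigma$ to $U(\vv)$ is a nonzero holomorphic two-form, hence its class in $\HH^2(U(\vv),\CC)$ is nonzero. That implication is false on a general open complex manifold: a nonzero closed holomorphic $2$-form can be exact (e.g.\ $dz_1\wedge dz_2=d(z_1\,dz_2)$ on $\CC^2$), so "nonzero as a form on $U(\vv)$" does not by itself give "nonzero in $\HH^2(U(\vv),\CC)$". The point is precisely what the paper's proof supplies: from the exact sequence $\HH^2(\M(\vv),U(\vv),\Z)\xrightarrow{f}\HH^2(\M(\vv),\Z)\xrightarrow{\iota_{\vv}^*}\HH^2(U(\vv),\Z)$ and \cite[Section 11.1]{Voisin}, the kernel of $\iota_{\vv}^*$ equals $\im(f)$ and is generated by the classes of the codimension-one components of $\M(\vv)\smallsetminus U(\vv)$, hence lies in $\Pic(\M(\vv))$; since this kernel is of type $(1,1)$ and $\sigma$ is of type $(2,0)$, indeed $\iota_{\vv}^*\sigma\neq 0$. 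With that substitution your argument closes and is equivalent to the paper's (which phrases the same injectivity for the whole transcendental lattice $T_{\M(\vv)}$ rather than just for the line $\CC\sigma$).
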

\begin{proof}
  Let $T_{\M(\vv)}$ be the transcendental lattice of $\M(\vv)$. From Lemma \ref{CohomologyLemma}, we can prove that $\Phi_{\cS,1|T_{\M(\vv)}}^*=-\id_{T_{\M(\vv)}}$.
  \correct{Indeed,} we have an exact sequence:
  \[\HH^2(\M(\vv),U(\vv),\Z) \xrightarrow{f} \HH^2(\M(\vv),\Z)
    \xrightarrow{\iota_{\vv}^*} \HH^2(U(\vv),\Z).
  \]
  According to \cite[Section 11.1]{Voisin}, $\im (f)\subset \Pic (\M(\vv))$.
  Therefore, $\iota_{\vv}^*$ induces an injection $T_{\M(\vv)}\hookrightarrow \HH^2(U(\vv),\Z)$.
	\correct{Moreover, an element in $T_{\M(\vv)}$ can be written $\theta(0,x,0)$ with $x\in T_{X}$ because $\theta$ respects the Hodge structure.
	In particular, $d_{\vv}$ is orthogonal to $T_{\M(\vv)}$.} So, Lemma \ref{CohomologyLemma} implies \correct{$\Phi_{\cS,1|T_{\M(\vv)}}^*=-\id_{T_{\M(\vv)}}$ which proves} our claim.	
\end{proof}
\begin{rmk}
  If $\dim(\M(\vv))=2$, the involution $\Phi_{\cS,1}$ induces a double cover $\M(\vv)\rightarrow \mathds{P}^2$ and is also anti-symplectic.
\end{rmk}
As a direct consequence, we have the following corollary.
\begin{cor}\label{anticor}
  The involutions defined in Theorems \ref{mainth} are anti-symplectic.
\end{cor}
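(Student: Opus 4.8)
The plan is to deduce the corollary directly from Proposition \ref{anti}, after checking that the Mukai vector $\vv$ produced by Theorem \ref{mainth} is exactly of the shape required there, and after dealing separately with the boundary case $\dim \M(\vv)=2$.

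First I would record that the spherical bundle $\cS$ with Mukai vector $(2,H,\frac{g}{2})$ is indeed the stable bundle appearing in Proposition \ref{anti}: since $\Pic(X)=\ZZ\cdot H\oplus^{\bot} N$ with $N$ carrying no effective divisor and $\s^2=H^2-2\cdot 2\cdot\frac{g}{2}=-2$, such a bundle is unique and slope-stable. Next I would match the third coordinate of $\vv$. Writing the middle coordinate of $\vv$ as $(k+1)H$, Theorem \ref{mainth} gives $2v_2=(2g-2)(k+1)-(2k+1)(\frac{g}{2}-1)$, that is $v_2=(g-1)(k+1)-\frac{2k+1}{2}(\frac{g}{2}-1)$, which is precisely the expression $v_2=(g-1)v_1-\frac{v_0}{2}(\frac{g}{2}-1)$ of Proposition \ref{anti} with $v_0=2k+1$ and $v_1=k+1$. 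The integrality hypothesis of Proposition \ref{anti}, namely that $\frac{v_0}{2}(\frac{g}{2}-1)$ be an integer, is automatic here: since $g\equiv 2\pmod 4$ the quantity $\frac{g}{2}$ is odd, so $\frac{g}{2}-1$ is even, whence $\frac{2k+1}{2}(\frac{g}{2}-1)\in\ZZ$ even though $v_0=2k+1$ is odd.

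It then remains to verify the two standing assumptions of Proposition \ref{anti}, that $U(\vv)$ is a non-empty Zariski open subset of $\M(\vv)$ and that $\dim\M(\vv)\ge 4$. The first is exactly the output of Theorem \ref{mainth}: its proof exhibits a non-empty (hence dense) open locus on which $\Phi_{\cS,1}(\cE)$ is a stable torsion-free sheaf with $\vv(\Phi_{\cS,1}(\cE))=\vv$, so $\Phi_{\cS,1}$ is a well-defined birational involution and $U(\vv)$ is non-empty open. As for the dimension, whenever $\dim\M(\vv)=\vv^2+2\ge 4$ Proposition \ref{anti} applies verbatim and yields that $\Phi_{\cS,1}$ is anti-symplectic.

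The only genuinely separate case is $\dim\M(\vv)=2$, which is allowed by Theorem \ref{mainth} (it requires merely $\dim\M(\vv)\ge 2$) but excluded from Proposition \ref{anti}. Here $\M(\vv)$ is itself a K3 surface, and I would invoke the remark following Proposition \ref{anti}: the involution $\Phi_{\cS,1}$ realizes $\M(\vv)$ as a double cover of $\PP^2$, so it acts as $-\id$ on the transcendental lattice and in particular on $\HH^{2,0}$, hence is anti-symplectic. Assembling the two cases gives the claim. I do not expect any real obstacle beyond this bookkeeping; the single point requiring a moment's care is the integrality of $v_2$, which is precisely where the congruence $g\equiv 2\pmod 4$ enters.
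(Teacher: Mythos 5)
Your proposal is correct and follows the same route as the paper, which derives the corollary as a direct consequence of Proposition \ref{anti} (together with the remark covering the case $\dim\M(\vv)=2$). Your extra bookkeeping — matching $v_2=(g-1)v_1-\frac{v_0}{2}(\frac{g}{2}-1)$, noting that $g\equiv 2\pmod 4$ gives the required integrality, and observing that Theorem \ref{mainth} supplies the non-empty open set $U(\vv)$ — is exactly the verification the paper leaves implicit.
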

As explained in Remark \ref{actionrk}, 
 we cannot conclude from Lemma \ref{CohomologyLemma} the action of $\Phi_{\cS,1}^*$ on the cohomology.
 However, using lattice theory, we can show that $\Phi_{\cS,1}^*$ is a reflection through an element of square 2 or $n-1$ or $2(n-1)$ with $2n=\dim(\M(\vv))$.

 \subsection{Action on the cohomology}\label{cohomology}

 In this section, we assume that $\Pic X= \Z\cdot H$ in order to obtain information on $\HH^2(\M(\vv),\Z)^{\Phi_{\cS,1}}$ from lattice considerations.
Let us describe the Picard lattice of $\M(\vv)$, based on \eqref{Ogradyeq}.
\begin{lemma}
  Let $g\geq 2$ even. Let $X$ be a K3 surface such that
  $\Pic(X)=\Z \cdot H$, with $H^2=2g-2$. Let $\vv=(v_0,v_1 H,v_2)$ a Mukai vector with $v_2=(g-1)v_1-\frac{v_0}{2}(\frac{g}{2}-1)$ an integer. 
  We assume that $2n\coloneqq\dim \M(\vv)\geq 4$. We set
  $\delta\coloneqq v_0\wedge v_2$, $d_{\vv}=\theta(2,1,\frac{g}{2}-1)$ and $f_{\vv}\coloneqq\frac{1}{\delta}\theta(v_0,0,-v_2)$.
  Then $(d_{\vv},f_{\vv})$ gives a basis of $\Pic(\M(\vv))$ with bilinear form given by the matrix:
  $$\begin{pmatrix}
    2 &  \frac{2v_2}{\delta}-(\frac{g}{2}-1)\frac{v_0}{\delta}\\
    \frac{2v_2}{\delta}-(\frac{g}{2}-1)\frac{v_0}{\delta}& 2\frac{v_0v_2}{\delta^2}
  \end{pmatrix}.$$
  In \correct{particular}: 
  \begin{equation}
    \discr (\Pic(\M(\vv)))=-\frac{4(g-1)}{\delta^2}(n-1).
    \label{discrPic}
  \end{equation}
\end{lemma}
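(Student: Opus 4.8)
The plan is to realize $\Pic(\M(\vv))$ as a concrete rank-two sublattice of the Mukai lattice and then transport the Mukai pairing through O'Grady's Hodge isometry $\theta$ of \eqref{Ogradyeq}. Since $\M(\vv)$ is an irreducible holomorphic symplectic manifold, $\HH^1(\M(\vv),\cO_{\M(\vv)})=0$, so $\Pic(\M(\vv))$ equals the $(1,1)$-part of $\HH^2(\M(\vv),\Z)$; as $\theta$ respects the Hodge structures, this is the $\theta$-image of the $(1,1)$-part of $\vv^{\bot}$. Under the hypothesis $\Pic(X)=\Z\cdot H$, the algebraic part of $\HH^*(X,\Z)$ is $\Lambda\coloneqq\HH^0(X,\Z)\oplus\Z H\oplus\HH^4(X,\Z)$, and since $\vv\in\Lambda$ the relevant lattice is simply $\vv^{\bot}\cap\Lambda$. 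First I would record that on $\Lambda$ the Mukai pairing reads $(a,bH,c)\cdot(a',b'H,c')=(2g-2)bb'-ac'-a'c$, so that membership $(a,bH,c)\in\vv^{\bot}\cap\Lambda$ is governed by the single linear equation $(2g-2)v_1b-v_2a-v_0c=0$.

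Next I would propose the Mukai vectors $(2,H,\tfrac g2-1)$ and $\tfrac1\delta(v_0,0,-v_2)$ as a candidate basis, noting that the second is integral because $\delta=v_0\wedge v_2$ divides $v_0$ and $v_2$. Checking that both lie in $\vv^{\bot}\cap\Lambda$ is a one-line substitution into the linear equation above, using the defining relation $2v_2=(2g-2)v_1-(\tfrac g2-1)v_0$ for the first vector and the relation $v_2a+v_0c=0$ trivially for the second. The one genuinely structural step — and the part I expect to require the most care — is showing these two vectors generate $\vv^{\bot}\cap\Lambda$ rather than a proper finite-index sublattice. Here I would exploit that the $H$-coordinate of the first vector equals $1$, so projection onto the $H$-coordinate maps $\vv^{\bot}\cap\Lambda$ onto $\Z$; subtracting the appropriate multiple of the first vector from an arbitrary $x\in\vv^{\bot}\cap\Lambda$ produces an element with vanishing $H$-coordinate, and such elements satisfy $v_2a+v_0c=0$. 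Dividing by $\delta$ and invoking $\gcd(v_0/\delta,v_2/\delta)=1$ identifies these as exactly the integer multiples of $\tfrac1\delta(v_0,0,-v_2)$, which closes the generation argument.

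Once the basis is in place, the Gram matrix is a direct Mukai-pairing computation: the first vector has square $(2g-2)-4(\tfrac g2-1)=2$, the second has square $2v_0v_2/\delta^2$, and their product is $2v_2/\delta-(\tfrac g2-1)v_0/\delta$, matching the asserted matrix after $\theta$ transports it to $(d_{\vv},f_{\vv})$. Finally, for the discriminant I would expand $2\cdot(2v_0v_2/\delta^2)-\big(2v_2/\delta-(\tfrac g2-1)v_0/\delta\big)^2$, eliminate $2v_2-(\tfrac g2-1)v_0$ in favour of $(2g-2)v_1-(g-2)v_0$, and then substitute $v_2=(g-1)v_1-\tfrac{v_0}2(\tfrac g2-1)$; after a routine simplification the result collapses to $-4(g-1)\big((g-1)v_1^2-v_0v_2\big)/\delta^2$. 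Recognising that $\vv^2=(2g-2)v_1^2-2v_0v_2=2(n-1)$ gives $(g-1)v_1^2-v_0v_2=n-1$, which yields \eqref{discrPic}.
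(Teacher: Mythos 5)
Your proposal is correct and follows essentially the same route as the paper: identify $\Pic(\M(\vv))$ with $\theta\bigl(\vv^{\bot}\cap(\HH^0\oplus\Z H\oplus\HH^4)\bigr)$, check the two vectors are orthogonal to $\vv$, compute the Gram matrix via the Mukai pairing, and use $n-1=(g-1)v_1^2-v_0v_2$ for the discriminant. The only difference is that you spell out the generation (primitivity) step via the $H$-coordinate projection and the coprimality of $v_0/\delta$ and $v_2/\delta$, whereas the paper simply asserts that $\Z d_{\vv}\oplus\Z f_{\vv}$ is a primitive rank-two sublattice.
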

\begin{proof}
  Since $d_{\vv}$ and $f_{\vv}$ are orthogonal to $\vv$, we have $\Z d_{\vv}\oplus \Z f_{\vv}\subset \Pic(X)$. Moreover, $\Z d_{\vv}\oplus \Z f_{\vv}$ is a primitive sublattice of rank two, so $\Pic(X)=\Z d_{\vv}\oplus \Z f_{\vv}$. The bilinear matrix is obtained because $\theta$ is an isometry. Finally, since 
	\begin{equation}
	n-1=(g-1)v_1^2-v_0v_2, 
	\label{dim}
	\end{equation}
	we obtain the given discriminant. 
\end{proof}
From the two previous lemmas, we deduce the cohomology action of $\Phi_{\cS,1}$.
\begin{prop}\label{action}
  Let $g\in\N$ with $g\equiv2\mod 4$ and $k\in \N$. Let $X$ be a K3 surface such that
  $\Pic(X)=\Z \cdot H$, with $H^2=2g-2$. Let $\vv=(2k+1,
	(k+1) H,v_2)$ be a Mukai vector with $v_2=(g-1)(k+1)-\frac{2k+1}{2}(\frac{g}{2}-1)$. 
    We set $\delta=v_0\wedge v_2$.
  We assume that $2n\coloneqq\dim (\M(\vv))\geq 4$. 
  
  \begin{enumerate}[label=\arabic*.]
    \item
    For $n>2$, if $\HH^2(\M(\vv),\Z)^{\Phi_{\cS,1}}\simeq (2(n-1))$
    then $\delta=1$ and $\frac{-1}{g-1}$ is a square in $\Z/(n-1)\Z$.
  \item \label{P2}
    For even $n>2$, if $\HH^2(\M(\vv),\Z)^{\Phi_{\cS,1}}\simeq
    (n-1)$ then $\delta=1$ and $\frac{-1}{2(g-1)}$ is a square in
    $\Z/(n-1)\Z$.
  \item
    Otherwise $\HH^2(\M(\vv),\Z)^{\Phi_{\cS,1}}\simeq (2)$.
  \end{enumerate}
\end{prop}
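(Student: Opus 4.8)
The plan is to take as input the fact, recorded in the remark preceding the statement, that $\Phi_{\cS,1}^*$ is an integral reflection $R_w$ in a primitive positive class $w\in\Pic(\M(\vv))$, so that $\HH^2(\M(\vv),\Z)^{\Phi_{\cS,1}}=\Z w\cong(w^2)$ with $w^2\in\{2,\,n-1,\,2(n-1)\}$. Given this, the proposition is no longer about \emph{whether} $\Phi_{\cS,1}^*$ is a reflection but about \emph{which} of the three squares the fixed class realizes, for the given data $(g,k)$. Since $w$ lives in the explicit rank-two lattice $\Pic(\M(\vv))=\Z d_{\vv}\oplus\Z f_{\vv}$ whose Gram matrix and discriminant $-\tfrac{4(g-1)}{\delta^2}(n-1)$ are computed in the previous lemma, each case becomes a representation problem: the fixed class of square $2(n-1)$ (resp.\ $n-1$) exists in $\Pic(\M(\vv))$, and I will extract the stated arithmetic conditions as \emph{necessary} consequences of its solvability.

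Concretely, I would write $w=x\,d_{\vv}+y\,f_{\vv}$, so that $w^2=2x^2+2bxy+cy^2$ with $b=d_{\vv}\cdot f_{\vv}$ and $c=f_{\vv}^2=\tfrac{2v_0v_2}{\delta^2}$, and record $b^2-2c=\tfrac{4(g-1)(n-1)}{\delta^2}$ from the discriminant. Completing the square gives $2w^2=(2x+by)^2-\tfrac{4(g-1)(n-1)}{\delta^2}\,y^2$, whence for $w^2=2(n-1)$ one gets $(2x+by)^2=4(n-1)\bigl(1+\tfrac{g-1}{\delta^2}y^2\bigr)$, and for $w^2=n-1$ one gets $(2x+by)^2=2(n-1)\bigl(1+\tfrac{2(g-1)}{\delta^2}y^2\bigr)$. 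Primitivity of $w$ together with integrality of these equations is what forces the factor $\tfrac1{\delta^2}$ to disappear, i.e.\ $\delta=1$, in both special cases.

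With $\delta=1$ I would reduce modulo $n-1$, using the relation $n-1=(g-1)v_1^2-v_0v_2$ to evaluate $b$ and $c$ modulo $n-1$. Solvability of the first equation then requires $1+(g-1)y^2\equiv 0\pmod{n-1}$ for some $y$, that is, $-\tfrac{1}{g-1}$ a square in $\Z/(n-1)\Z$; solvability of the second requires $1+2(g-1)y^2\equiv 0\pmod{n-1}$, that is, $-\tfrac{1}{2(g-1)}$ a square in $\Z/(n-1)\Z$. The parity restriction $n$ even in case \ref{P2} enters because a reflection in a class of odd square $n-1$ is admissible (a monodromy operator) only then; this is also where one must check that the divisibility $\operatorname{div}(w)$ equals $n-1$ or $2(n-1)$ so that $R_w$ is integral. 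Finally, when neither congruence is solvable no class of square $n-1$ or $2(n-1)$ is fixed, and since $d_{\vv}^2=2$ the fixed class has square $2$, giving case~3.

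The step I expect to be the main obstacle is bridging the local computation of Lemma \ref{CohomologyLemma}, valid only on $U(\vv)$, with the genuine reflection $R_w$ on all of $\HH^2(\M(\vv),\Z)$: as flagged in Remark \ref{actionrk}, a divisorial component of $\M(\vv)\setminus U(\vv)$ can make $w$ differ from $d_{\vv}$, which is exactly why one cannot simply read off $w^2=2$ and must instead argue intrinsically through the rank-two lattice $\Pic(\M(\vv))$. The delicate remaining points are purely arithmetic: upgrading the mod-$(n-1)$ congruences to the genuine perfect-square condition on $(n-1)\bigl(1+\tfrac{g-1}{\delta^2}y^2\bigr)$, and matching $\operatorname{div}(w)$, which together pin down the precise quadratic-residue statements and the constraint $\delta=1$.
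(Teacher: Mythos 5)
Your overall strategy (work inside the rank-two lattice $\Pic(\M(\vv))=\Z d_{\vv}\oplus\Z f_{\vv}$, use its Gram matrix and discriminant, and extract the congruences as necessary conditions for the fixed class to exist) is in the same spirit as the paper, but there are two genuine gaps. First, you take as \emph{input} that $\HH^2(\M(\vv),\Z)^{\Phi_{\cS,1}}$ is $\Z w$ with $w^2\in\{2,n-1,2(n-1)\}$ and with prescribed divisibility, citing the remark before the statement; but that remark is only a forward summary of this very proposition. The paper actually has to prove this: it invokes Camere's classification of invariant lattices of anti-symplectic involutions on manifolds of $K3^{[n]}$-type (\cite[Proposition 1.6]{camerenonsymplectic}), which leaves four possible discriminant groups for $T=\HH^2(\M(\vv),\Z)^{\Phi}$, and then excludes $\rk T=2$ by comparing $\discr(\Pic(\M(\vv)))=-\tfrac{4(g-1)}{\delta^2}(n-1)$ with the bound $g-1>v_0^2\ge\delta^2$ coming from $\dim\M(\vv)\ge 4$. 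Without this step your case~3 ("otherwise $\simeq(2)$") is unproven, and you also lose the divisibility data ($\operatorname{div}(w)=n-1$ or $2(n-1)$ in the full Mukai lattice) that is indispensable below.

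Second, the two arithmetic deductions you sketch do not follow from the representation equation alone. The identity $(2x+by)^2=4(n-1)\bigl(1+\tfrac{g-1}{\delta^2}y^2\bigr)$ is automatically integral because $\tfrac{(g-1)(n-1)}{\delta^2}=b^2-2c\in\Z$, so "integrality plus primitivity of $w$" does not force $\delta=1$; the paper instead gets $\delta=1$ by computing the square of a primitive generator of $w^{\perp}$ in $\Pic(\M(\vv))$ (namely $-\tfrac{2(g-1)}{\delta^2}$, $-\tfrac{8(g-1)}{\delta^2}$ or $-\tfrac{4(g-1)}{\delta^2}$), using that the lattice is even and that $\delta\mid v_0=2k+1$ is odd to conclude $\delta^2\mid(g-1)$, hence $\delta^2\mid(n-1)$ and $\delta=1$ since $\delta=(n-1)\wedge(g-1)$. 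Likewise, the implication "$4(n-1)(1+(g-1)y^2)$ is a perfect square $\Rightarrow$ $1+(g-1)y^2\equiv 0\pmod{n-1}$" is false when $n-1$ has square factors (take $n-1=9$ and $1+(g-1)y^2=4$); you would need to feed in $\operatorname{div}(w)\,\mid\,w\cdot d_{\vv}=2x+by$ to rescue it, which you flag but do not carry out. The paper avoids this by inverting the roles: it expands $d_{\vv}=\lambda x+\mu y$ (or $2d_{\vv}=\lambda x+\mu y$) in a basis adapted to the fixed generator and its orthogonal complement, so that $2=d_{\vv}^2$ immediately reads $1=\lambda^2(n-1)-\mu^2(g-1)$, giving the congruence directly. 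So the skeleton is salvageable, but the two key closing steps ($\delta=1$ and the quadratic-residue conditions) and the input classification are precisely what is missing.
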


About \ref{P2}, note that $\frac{-1}{g-1}$ is well defined in $\Z/(n-1)\Z)$, because $\delta=(n-1)\wedge(g-1)=1$.

\begin{proof}
Before starting the proof note that $\delta=(n-1)\wedge(g-1)$. 
  	This is a direct consequence of the two equations
  $v_2=(g-1)v_1-\frac{v_0}{2}(\frac{g}{2}-1)$ and
  $n-1=(g-1)v_1^2-v_0v_2$. 
	
  We set $T\coloneqq\HH^2(\M(\vv),\ZZ)^{\Phi_{\cS,1}}$.
  By Proposition \ref{anti}, the involution $\Phi$ is anti-symplectic. So:
  $$
  T\subset \Pic (\M(\vv)).
  $$
  Since $\rk(\Pic(\M(\vv)))=2$, according to \cite[Proposition 1.6]{camerenonsymplectic}, there are four possibilities: 
  \begin{enumerate}[label=(\roman*)]
  \item
    $A_{T}=\Z/2\Z$;
  \item
    $A_{T}=\Z/2\Z\oplus \Z/2(n-1)\Z$;
  \item
    $A_{T}=\Z/2(n-1)\Z$, $n>2$.
  \item
    $A_{T}=\Z/(n-1)\Z$, $n>2$ and $n$ even.
  \end{enumerate}
  First note that all these cases imply that $\rk(T)=1$. Indeed if $\rk(T)=2$, it provides $T=\Pic(\M(\vv))$ and so by (\ref{discrPic}):
	\begin{enumerate}[label=(\roman*)]
	\item
	$\delta^2=2(n-1)(g-1)$;
	\item \label{Pii}
	$\delta^2=g-1$;
	\item \label{Piii}
	$\delta^2=2(g-1)$;
	\item \label{Piv}
	$\delta^2=4(g-1)$.
	\end{enumerate}
However the hypothesis $\dim (\M(\vv))\geq 4$ implies that
$g-1>v_0^2\geq\delta^2$. So all the previous cases are impossible. The
case \ref{Pii} can only occur for $\rk T=2$, so it remains to treat
the cases \ref{Piii} and \ref{Piv} with $\rk T=1$.
  	According to \cite[Proposition 1.6]{camerenonsymplectic}, we have the following subcases:
	\begin{enumerate}[label=(\roman*), start=3]
	\item
  \begin{enumerate}[label=(\alph*)]
	\item 
	There exists a primitive element $x$ of square $2(n-1)$ and divisibility $2(n-1)$.
	\item \label{iiib}
	There exists a primitive element $x$ of square $2(n-1)$ and divisibility $(n-1)$.
	\end{enumerate}
	\item
	There exists a primitive element $x$ of square $(n-1)$ and divisibility $(n-1)$.
	\end{enumerate}
		Let $y$ be a primitive element such that
        $B_{\M(\vv)}(x,y)=0$. According to the divisibility of $x$,
        there are two cases, namely $\Z x\oplus \Z y=\Pic(\M(\vv))$ or 
	$\Z x\oplus \Z y$ has index 2 in $\Pic(\M(\vv))$ in the
        subcase \ref{iiib} of \ref{Piii}.
        So $\discr \left(\Z x\oplus \Z
          y\right)=-\frac{4(g-1)}{\delta^2}(n-1)$ or $\discr \left(\Z
          x\oplus \Z y\right)=-\frac{16(g-1)}{\delta^2}(n-1)$
        again in the
        subcase \ref{iiib} of \ref{Piii}.
        It implies in the different cases that: 
        \begin{enumerate}[label=(\roman*), start=3]
	\item
            \begin{enumerate}[label=(\alph*)]
	\item
$y^2=\frac{-2(g-1)}{\delta^2}$,
	\item
	$y^2=\frac{-8(g-1)}{\delta^2}$,
	\end{enumerate}
	\item
	$y^2=\frac{-4(g-1)}{\delta^2}$.
	\end{enumerate}

	Since $\delta$ is odd, it means that $\delta^2\ |\
        (g-1)$. Hence by \eqref{dim}, $\delta^2\  |\ (n-1)$ and so
        $\delta=1$. 
	Moreover, we can write $d_v=\lambda x+\mu y$ or $2d_v=\lambda
        x+\mu y$,
        in the subcase \ref{iiib} of \ref{Piii}, that is:
        \begin{enumerate}[label=(\roman*), start=3]
	\item
          \begin{enumerate}[label=(\alph*)]
          \item
            $2=\lambda^2(2(n-1))+\mu^2\left(-2(g-1)\right)$;
          \item
            $8=\lambda^2(2(n-1))+\mu^2\left(-8(g-1)\right)$;
          \end{enumerate}
	\item
          $2=\lambda^2(n-1)+\mu^2\left(-4(g-1)\right)$.
        \end{enumerate}

        In the subcase \ref{iiib} of \ref{Piii}, $\lambda$ has to be divisible
by 2; so in case (iii) we obtain that $\frac{-1}{g-1}$ is a square in
$\Z/(n-1)\Z$.
	In case \ref{Piv}, $\lambda$ is also divisible by 2 and we
        obtain that $\frac{-1}{2(g-1)}$ is a square in $\Z/(n-1)\Z$.  
\end{proof}

\bibliography{bibliography}
\bibliographystyle{amsalpha-my}
\end{document}